\title[Lagrangian scheme for nonlinear diffusion]{A Lagrangian scheme
  for the solution of nonlinear diffusion equations using moving simplex meshes}
\author{Jos\'{e} A. Carrillo}\address{Jos\'{e} A. Carrillo, Department of Mathematics, Imperial College London, London, SW7 2AZ, UK}
\author{Bertram D\"{u}ring}
\address{Bertram D\"{u}ring, Department of Mathematics, University of
  Sussex, Pevensey II, Brighton, BN1 9QH, UK}
\author{Daniel Matthes}
\address{Daniel Matthes \\ Technische Universit\"at M\"unchen \\ Zentrum Mathematik \\ Boltzmannstra\ss e 3 \\ D-85747 Garching}
\author{David S. McCormick}\address{David S. McCormick, Department of Mathematics, University of
  Sussex, Pevensey II, Brighton, BN1 9QH, UK}
\thanks{DM was supported by the DFG 
  Collaborative Research Center TRR 109, ``Discretization in Geometry and Dynamics''. 
  BD and DSMcC were supported by the Leverhulme Trust 
  research project grant ``Novel discretisations for higher-order nonlinear PDE'' (RPG-2015-69). }
\date{\today}
\newcommand{\ee}{\mathrm{e}}
\newcommand{\R}{\mathbb{R}}
\newcommand{\Rnn}{\mathbb{R}_{\ge0}}
\newcommand{\Rp}{\mathbb{R}_{>0}}
\newcommand{\myset}[2]{\left\{#1\,;\,#2\right\}}
\newcommand{\eps}{\varepsilon}
\newcommand{\weakstarto}{\overset{*}{\rightharpoonup}}
\newcommand{\pd}{\partial}
\newcommand{\df}{\mathrm{D}}
\newcommand{\dn}{\mathrm{d}}
\newcommand{\dd}{\,\mathrm{d}}
\newcommand{\Grad}{\nabla}
\newcommand{\Div}{\nabla \cdot}
\newcommand{\Laplace}{\Delta}
\newcommand{\nml}{\nu}
\newcommand{\stdtri}{\mathord{\bigtriangleup}}%{\mathord{\varlrttriangle}}
\newcommand{\triang}{\mathscr{T}}
\newcommand{\vecG}{\vec{G}}
\newcommand{\prb}{\mathcal{P}}
\newcommand{\dens}{\mathcal{P}_2^\text{ac}(\R^d)}
\newcommand{\ansatz}{\mathcal{A}_\mathscr{T}}
\newcommand{\ansatzeps}{\mathcal{A}_{\mathscr{T}_\eps}}
\newcommand{\Gansatz}{(\R^d)_\mathscr{T}^L}
\newcommand{\test}{\mathcal{D}}
\newcommand{\refrho}{\overline\rho}
\newcommand{\intrho}{\widetilde\rho}
\newcommand{\velo}{\mathbf{v}}
\newcommand{\bigvelo}{\mathbf{V}}
\newcommand{\intvelo}{\widetilde{\velo}}
\newcommand{\plan}{\gamma}
\newcommand{\intplan}{\widetilde{\gamma}}
\newcommand{\intM}{\widetilde{M}}
\newcommand{\id}{\mathrm{id}}
\newcommand{\idm}{\mathds{1}}
\newcommand{\bigO}{\mathcal{O}}
\newcommand{\jm}{\mathbb{J}}
\newcommand{\disc}{\boxplus}%{{[\tau,\triang]}}
\newcommand{\aint}{\fint}
\newcommand{\zs}{0}
\newcommand{\BB}{\mathcal{B}}
\newcommand{\impulse}{\mathbf{J}}
\newcommand{\momentum}{\mathbf{p}}
\newcommand{\trap}{\top\!}
\newcommand{\I}{\mathrm{I}}
\newcommand{\II}{\mathrm{II}}
\newcommand{\ent}{\mathbf{E}}
\newcommand{\aent}{\mathcal{E}}
\newcommand{\entdens}{\mathbb{H}}
\newcommand{\distdens}{\mathbb{L}}
\newcommand{\potdens}{\mathbb{V}}
\newcommand{\wass}{\mathrm{W}_2}
\DeclareMathOperator*{\argmin}{argmin}
\DeclareMathOperator{\tr}{tr}
\theoremstyle{plain}
\newtheorem{thm}{Theorem}[section]
\newtheorem{lemma}[thm]{Lemma}
\newtheorem{rmk}[thm]{Remark}
\newtheorem{cor}[thm]{Corollary}
\numberwithin{equation}{section}
\newtheorem*{rep@theorem}{\rep@title}
\newcommand{\newreptheorem}[2]{\newenvironment{rep#1}[1]{\def\rep@title{#2 \ref{##1}}\begin{rep@theorem}}{\end{rep@theorem}}}
\begin{document}

\begin{abstract}
  A Lagragian numerical scheme for solving nonlinear degenerate Fokker-Planck equations
  in space dimensions $d\ge2$ is presented. It applies to a large class of nonlinear
diffusion equations, whose dynamics are driven by internal energies
and given external potentials, e.g.\ the porous medium equation and
the fast diffusion equation.
  The key ingredient in our approach is the gradient flow structure of the dynamics.
  For discretization of the Lagrangian map,
  we use a finite subspace of linear maps in space
  and a variational form of the implicit Euler method in time.
  Thanks to that time discretisation,
  the fully discrete solution inherits energy estimates from the original gradient flow,
  and these lead to weak compactness of the trajectories in the continuous limit.
  Consistency is analyzed in the planar situation, $d=2$.
  A variety of numerical experiments for the porous medium equation indicates
  that the scheme is well-adapted to track the growth of the solution's support.
\end{abstract}

\maketitle

\section{Introduction}
We study a Lagrangian discretization of the following type of initial value problem
for a nonlinear Fokker--Planck equation:
\begin{subequations}
\label{eq:NFPsystem}
\begin{align}
  \label{eq:NFP}
  \pd_{t} \rho &= \Laplace P(\rho) + \Div (\rho\,\nabla V) && \text{on $\Rp\times\R^d$}, \\
  \label{eq:NFPic}
  \rho(\cdot,0) &= \rho^0 && \text{on $\R^d$}.
\end{align}
\end{subequations}
This problem is posed for the time-dependent probability density function $\rho \colon \Rnn\times\R^d\to \Rnn$,
with initial condition $\rho^0\in\dens$.
We assume that the pressure $P \colon \Rnn\to\Rnn$ can be written in the form
\begin{align}
  \label{eq:Ph}
  P(r) = rh'(r)-h(r) \quad\text{for all $r \ge 0$},
\end{align}
for some non-negative and convex $h \in C^1(\Rnn)\cap
C^\infty(\Rp)$ and that $V\in
C^2(\R^d)$ is a non-negative potential without loss of generality. 

Problem~\eqref{eq:NFPsystem} encompasses a large class of
diffusion equations, such as the heat equation ($P(r)=r, V=0$),
the porous medium equation ($P(r)=r^m/(m-1), m>1, V=0$) and the fast
diffusion equation ($P(r)=r^m/(m-1), m<1, V=0$), and extends to related
problems with given external potentials $V$. To motivate our discretization, we first briefly recall the Lagrangian form of the dynamics:
rewriting~\eqref{eq:NFPsystem} as a transport equation, we obtain
\begin{subequations}
\label{eq:LagrSystem}
\begin{align}
  \label{eq:trapo}
  \pd_t\rho + \Div\big(\rho\,\velo[\rho]\big) = 0,
\intertext{with a velocity field $\velo$ that depends on the solution $\rho$ itself,}
  \label{eq:velo}
  \velo[\rho] = -\nabla\big(h' (\rho)+V\big).
\end{align}
\end{subequations}
More general systems can be written in this form. For instance, interaction potentials leading to aggregation equations, relativistic heat equations, $p$-Laplacian equations and Keller-Segel type models can also be included; see Carrillo and Moll~\cite{art:CaMo09} and the references therein for a good account of models enjoying this structural form. Here, we reduce to models with nonlinear degenerate diffusion, i.e. $h(0)=h'(0)=0$, and confining potentials in order to explore our new discretization.

The system~\eqref{eq:LagrSystem} naturally induces a Lagrangian representation of the dynamics, which can be summarized as follows. Below, we use the notation $G_\#\refrho$ for the \emph{push-forward} of $\refrho$ under a map $G \colon \R^d\to\R^d$;
the definition is recalled in~\eqref{eq:pushdens}.
\begin{lemma}
  \label{lem:Lagrange}
  Assume that $\rho \colon [0,T]\times\R^d\to\Rnn$ is a smooth positive solution of~\eqref{eq:NFPsystem}.
  Let $\refrho \colon \R^d\to\Rnn$ be a given reference density,
  and let $G^0 \colon \R^d\to\R^d$ be a given map such that $G^0_\#\refrho=\rho^0$.
  Further, let $G \colon [0,T]\times \R^d\to\R^d$ be the flow map associated to~\eqref{eq:velo},
  satisfying
  \begin{align}
    \label{eq:Lagrange}
    \pd_tG_t = \velo[\rho_t]\circ G_t, \quad G(0,\cdot)=G^0,
  \end{align}
  where $\rho_t:=\rho(t,\cdot)$ and $G_t:=G(t,\cdot) \colon \R^d \to\R^d$.
  Then
  \begin{equation}
    \label{eq:push}
    \rho_t = (G_t)_{\#} \refrho % = \frac{\refrho}{\det \df G_t}\circ G_t^{-1}.
  \end{equation}
  at any $t\in[0,T]$.
\end{lemma}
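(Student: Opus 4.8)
The plan is to verify the push-forward identity~\eqref{eq:push} directly, by tracking the Jacobian of the flow map and invoking the classical change-of-variables formula, rather than by appealing to an abstract uniqueness theorem for the continuity equation. Since $\rho$ is smooth and strictly positive on $[0,T]\times\R^d$, and $h\in C^\infty(\Rp)$, $V\in C^2(\R^d)$, the time-dependent velocity field $w_t:=\velo[\rho_t]=-\nabla\big(h'(\rho_t)+V\big)$ is smooth in space. With suitable control on its growth, this ensures that the flow $G_t$ solving~\eqref{eq:Lagrange} exists as a diffeomorphism of $\R^d$ for every $t\in[0,T]$, so that the Jacobian $J_t(y):=\det\df G_t(y)$ is well defined, continuous in $t$, and strictly positive.

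Next I would recall the evolution law for the Jacobian. Differentiating $\det\df G_t$ and using $\pd_t G_t=w_t\circ G_t$, Jacobi's formula yields
\begin{equation}
  \label{eq:jacobi}
  \pd_t J_t(y) = (\Div w_t)\big(G_t(y)\big)\,J_t(y);
\end{equation}
this is the one place where the structure of~\eqref{eq:Lagrange} enters, and it is what I would establish first. The heart of the argument is then to show that the quantity $Q_t(y):=\rho_t\big(G_t(y)\big)\,J_t(y)$ is independent of $t$. Differentiating by the product and chain rules and inserting~\eqref{eq:jacobi} gives
\begin{align}
  \pd_t Q_t(y)
  &= \Big[(\pd_t\rho_t)\big(G_t(y)\big) + \nabla\rho_t\big(G_t(y)\big)\cdot w_t\big(G_t(y)\big)\Big]J_t(y) + \rho_t\big(G_t(y)\big)\,\pd_t J_t(y) \nonumber \\
  &= \Big[\pd_t\rho_t + \Div\big(\rho_t\,w_t\big)\Big]\big(G_t(y)\big)\,J_t(y),
\end{align}
and by the Lagrangian form~\eqref{eq:trapo} of the equation the bracket vanishes identically, so $\pd_t Q_t\equiv 0$.

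Consequently $Q_t=Q_0$ for all $t\in[0,T]$, that is $\rho_t\big(G_t(y)\big)\,\det\df G_t(y)=\rho^0\big(G^0(y)\big)\,\det\df G^0(y)$. Since $G^0_\#\refrho=\rho^0$, the change-of-variables formula identifies the right-hand side with $\refrho(y)$, so $\rho_t\big(G_t(y)\big)\,\det\df G_t(y)=\refrho(y)$ for every $y\in\R^d$ and every $t$. This is precisely the pointwise form of $\rho_t=(G_t)_\#\refrho$, which proves~\eqref{eq:push}. The step I expect to be the main obstacle is the preliminary one: guaranteeing that $G_t$ remains a global diffeomorphism with positive Jacobian throughout $[0,T]$. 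This rests on the smoothness and strict positivity of the given solution $\rho$, which keep the field $w_t=-\nabla(h'(\rho_t)+V)$ regular enough to generate a well-defined flow; once this is secured, the computation above is routine.
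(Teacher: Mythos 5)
Your proof is correct, but it follows a genuinely different route from the one in the paper. You work with the \emph{forward} map: you differentiate the Jacobian determinant via Jacobi's formula, show that $Q_t(y)=\rho_t(G_t(y))\,\det\df G_t(y)$ is constant in $t$ because the bracket reduces to the continuity equation \eqref{eq:trapo}, and then identify $Q_0$ with $\refrho$ using the pointwise change-of-variables form of $G^0_\#\refrho=\rho^0$. The paper instead works with the \emph{inverse} map: it shows that the measure $(G_t^{-1})_\#\rho_t$ is constant in time by testing against an arbitrary smooth $\varphi$, differentiating $\int(\varphi\circ G_t^{-1})\rho_t$, using the identity $\pd_t(G_t^{-1})=-\df(G_t^{-1})\,\velo[\rho_t]$ and an integration by parts to make the two transport terms cancel; the conclusion then follows from $(G_0^{-1})_\#\rho^0=\refrho$. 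Your version is the classical Lagrangian mass-conservation computation and is arguably more direct, at the price of differentiating $\det\df G_t$ and of needing $G^0$ (not just $G_t$ for $t>0$) to be a diffeomorphism with positive Jacobian on $\supp\refrho$ so that the pointwise formula \eqref{eq:pushdens} applies at $t=0$; the paper's weak formulation trades that for the inverse-map calculus and avoids the determinant entirely. Both arguments rest on the same unproved regularity input, which you correctly flag: that the smooth, positive solution $\rho$ makes $\velo[\rho_t]$ regular enough for \eqref{eq:Lagrange} to generate a global flow of diffeomorphisms on $[0,T]$.
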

In short, the solution $G$ to~\eqref{eq:Lagrange} is a Lagrangian map for the solution $\rho$ to~\eqref{eq:NFPsystem}.
This fact is an immediate consequence of~\eqref{eq:trapo};
for convenience of the reader, we recall the proof in Appendix~\ref{sct:Lagrange}.
Notice that~\eqref{eq:push} can be substituted for $\rho$ in the expression~\eqref{eq:velo} for the velocity,
which makes~\eqref{eq:Lagrange} an autonomous evolution equation for $G$:
\begin{align}
  \label{eq:GG}
  \pd_tG_t = -\nabla\left[h'\left(\frac{\refrho}{\det\df G_t}\right)\right]\circ G_t-\nabla V\circ G_t.
\end{align}
A more explicit form of~\eqref{eq:GG} is derived in~\eqref{eq:Geq}.

There is a striking structural relation between~\eqref{eq:NFPsystem} and~\eqref{eq:GG}:
it is well-known (see Otto~\cite{art:Otto} or Ambrosio, Gigli and Savar\'e~\cite{book:AGS}) that~\eqref{eq:NFPsystem} is a gradient flow
for the relative Renyi entropy functional
\begin{align}
  \label{eq:aent}
  \aent(\rho ) = \int_{\R^d}\big[ h(\rho(x))+V(x)\rho(x)\big]\dd x,
\end{align}
with respect to the $L^2$-Wasserstein metric on the space $\dens$
of probability densities on ${\R^d}$. It appears to be less well known
(see Evans et~al.~\cite{art:EGS}, Carrillo and Moll~\cite{art:CaMo09}, or Carrillo and Lisini~\cite{art:CaLi10}) that also~\eqref{eq:GG} is a gradient
flow, namely for the functional
\begin{align}
  \label{eq:ent}
  \ent(G|\refrho) := \aent(G_\#\refrho)
  = \int_K \left[\widetilde h\left(\frac{\det\df G}{\refrho}\right) + V\circ G\right]\refrho\dd\omega,
  \quad \widetilde h(s):=s\,h(s^{-1}),
\end{align}
on the Hilbert space $L^2(K\to{\R^d};\refrho)$ of maps from $K$ to ${\R^d}$, where $\refrho$ is a reference measure supported on $K\subset \R^d$.
We shall discuss these gradient flow structures in more detail in Section~\ref{sec:GF} below.

The particular spatio-temporal discretization of the initial value
problem~\eqref{eq:NFPsystem} that we study in this
paper is based on these facts. Instead of numerically integrating~\eqref{eq:NFP}
to obtain the density $\rho$ directly, we
approximate the Lagrangian map $G$ --- using a finite subspace of linear maps in space --- which then allows us to recover $\rho$ a
posteriori via~\eqref{eq:push}. Our approximation for $G$ is
constructed by solving a succession of minimization
problems that are naturally derived from the gradient flow
structure behind~\eqref{eq:Lagrange}. Via variational methods,
this provides compactness estimates on the discrete solutions. 

The use of a finite subspace of linear maps for the Lagrangian maps has a geometric interpretation: the induced densities are piecewise constant on triangles whose vertices move in time. 
In the sequel, we further assume that $\lim_{s\to\infty}sh''(s)=+\infty$ in order to prevent the collapse of the images of our Lagrangian maps $G_t$, as proven below. However, we do not yet know how to prevent (globally) the possible intersection of the images of the approximated Lagrangian maps.

This approach is alternative to the one developed by Carrillo et~al.~\cite{art:CRW,art:CaMo09}, where $G$ is obtained by directly solving
the PDE~\eqref{eq:GG} numerically with finite differences or Galerkin approximation via finite element methods. In fact, the main difference between these two strategies can be summarized as follows: while
Carrillo et~al.~\cite{art:CRW,art:CaMo09} follows the strategy \emph{minimize first
then discretize}, our present approach is to \emph{discretize first
then minimize}. In other words, in Carrillo et~al.~\cite{art:CRW,art:CaMo09} one
minimizes first, obtaining as Euler-Lagrange equations the 
implicit Euler discretization of~\eqref{eq:GG} in \cite{art:CRW}
approximated by the explicit Euler method in \cite{art:CaMo09}, and then
discretized in space. In the present approach, we discretize first
approximating the space of Lagrangian maps by a suitable finite
subspace of linear maps, and then we minimize obtaining a
nonlinear system of equations to find the approximated
Lagrangian map within that set of linear maps.

Let us mention that other numerical methods have been developed to conserve particular properties of solutions of the gradient flow~\eqref{eq:NFPsystem}.
Finite volume methods preserving the decay of energy at the semi-discrete level, along with other important properties like non-negativity and mass conservation, were proposed in the papers~\cite{Filbet,CCH2}.
Particle methods based on suitable regularisations of the flux of the continuity equation~\eqref{eq:NFPsystem} have been proposed in the papers~\cite{DM,LM,MGallic,Russo}. A particle method based on the steepest descent of a regularized internal part of the energy~$\aent$ in~\eqref{eq:aent} by substituting particles by non-overlapping blobs was proposed and analysed in Carrillo et~al.~\cite{CHPW,CPSW}.
Moreover, the numerical approximation of the JKO variational scheme has already been tackled by different methods using pseudo-inverse distributions in one dimension (see~\cite{Blanchet,CG,GT,westdickenberg2010variational}) or solving for the optimal map in a JKO step (see~\cite{art:BCMO,art:JMO}). Finally, note that gradient-flow-based Lagrangian methods in one dimension for higher-order, drift diffusion and Fokker-Planck equations have recently been proposed in the papers~\cite{during2010gradient,MO3,MO1,MO2}.

There are two main arguments in favour of our taking this indirect
approach of solving~\eqref{eq:GG} instead of solving~\eqref{eq:NFPsystem}.
The first is our interest in \emph{structure
preserving discretizations}: the scheme that we present builds on
the non-obvious ``secondary'' gradient flow representation of~\eqref{eq:NFPsystem}
in terms of Lagrangian maps. The benefits are
monotonicity of the transformed entropy functional~$\ent$ and an
$L^2$-control on the metric velocity for our fully discrete
solutions, that eventually lead to weak compactness of the
trajectories in the continuous limit.
We remark that our long-term goal is to design a numerical scheme that makes full use
of the much richer ``primary'' variational structure of~\eqref{eq:NFPsystem} in the Wasserstein distance,
that is reviewed in Section~\ref{sec:GF} below.
However, despite significant effort in the recent past --- see, e.g., the references \cite{art:BCMO,art:BCC,CPSW, art:CRW,during2010gradient,art:GT,art:JMO,art:MO1,art:Peyre,westdickenberg2010variational} ---
it has not been possible so far to preserve features like metric contractivity of the flow under the discretization,
except in the rather special situation of one space dimension (see Matthes and Osberger~\cite{art:MO1}).
This is mainly due to the non-existence of finite-dimensional submanifolds of $\dens$
that are complete with respect to generalized geodesics.
% Generally, Lagrangian schemes appear to be better adapted to structure preservation,
% and also more amendable to convergence analysis, see \cite{art:CP,art:BCMO,art:MO2}.

The second motivation is that Lagrangian schemes are a natural choice for \emph{numerical front tracking},
see, e.g., Budd~\cite{art:Budd} for first results
on the numerical approximation of self-similar solutions to the porous medium equation.
We recall that due to the assumed degeneracy $P'(0)=0$ of the diffusion in~\eqref{eq:NFPsystem},
solutions that are compactly supported initially remain compactly supported at all times.
A numerically accurate calculation of the moving edge of support is challenging,
since the solution can have a very complex behavior near that edge,
like the waiting time phenomenon (see Vazquez~\cite{book:Vazquez}).
Our simulation results for $\partial_t\rho=\Delta(\rho^3)$
--- that possess an analytically known, compactly supported, self-similar Barenblatt solution ---
indicated that our discretization is indeed able to track the edge of support quite accurately.

This work is organized as follows. In Section~\ref{sec:GF} we present an
overview of previous results in gradient flows pertaining our
work. Section~\ref{sec:SchemeDefn} is devoted to the introduction of the linear set
of Lagragian maps and the derivation of the numerical scheme.
Section~\ref{sec:LimitTraj} shows the compactness of the approximated sequences of
discretizations and we give conditions leading to the eventual
convergence of the scheme towards~\eqref{eq:NFPsystem}. Section~\ref{sec:Consistency2D} deals
with the consistency of the scheme in two dimensions while Section~\ref{sec:NumSim2D} gives several numerical tests showing the performance of this
scheme.

\section{Gradient flow structures}
\label{sec:GF}
\subsection{Notations from probability theory}
$\prb(X)$ is the space of probability measures on a given base set
$X$. We say that a sequence $(\mu_n)$ of measures in $\prb(X)$
\emph{converges narrowly} to a limit $\mu$ in that space if
\[
\int_Xf(x)\dd\mu_n(x)\to\int_Xf(x)\dd\mu(x)
\]
for all bounded and continuous functions $f\in C^0_b(X)$. The
\emph{push-forward} $T_\#\mu$ of a measure $\mu\in\prb(X)$ under a
measurable map $T \colon X\to Y$ is the uniquely determined measure
$\nu\in\prb(Y)$ such that, for all $g\in C^0_b(Y)$,
\begin{align*}
  \int_Xg\circ T(x)\dd\mu(x) = \int_Yg(y)\dd\nu(y).
\end{align*}
With a slight abuse of notation
--- identifying absolutely continuous measures with their densities ---
we denote the space of probability densities on ${\R^d}$ of finite
second moment by
\begin{align*}
  \dens = \myset{\rho\in L^1({\R^d})}{\rho\ge0,\,\int_{\R^d}\rho(x)\dd x=1,\,\int_{\R^d}\|x\|^2\rho(x)\dd x<\infty}.
\end{align*}
Clearly, the reference density $\refrho$, which is supported on
the compact set $K\subset\R^d$, belongs to $\dens$. If
$G \colon K\to\R^d$ is a diffeomorphism onto its image (which is again
compact), then the push-forward of $\refrho$'s measure produces
again a density $G_\#\refrho\in\dens$, given by
\begin{align}
  \label{eq:pushdens}
  G_\#\refrho = \frac{\refrho}{\det \df G}\circ G^{-1}.
\end{align}

\subsection{Gradient flow in the Wasserstein metric}
Below, some basic facts about the Wasserstein metric
and the formulation of~\eqref{eq:NFPsystem} as gradient flow in that metric are briefly reviewed.
For more detailed information, we refer the reader to the monographs of Ambrosio et~al.~\cite{book:AGS} and Villani~\cite{book:Villani}.

One of the many equivalent ways to define the \emph{$L^2$-Wasserstein distance} between $\rho_0,\rho_1\in\dens$
is as follows:
\begin{align}
  \label{eq:W2}
  \wass(\rho_0,\rho_1) :=
  \inf\myset{\int_{\R^d} \|T(x)-x\|^2\rho_0(x)\dd x}{T:{\R^d}\to{\R^d}\ \text{measurable},\,T_\#\rho_0=\rho_1}^{\frac12}.
\end{align}
The infimum above is in fact a minimum,
and the --- essentially unique --- optimal map $T^*$ is characterized by Brenier's criterion; see, e.g., Villani~\cite[Section 2.1]{book:Villani}.
% there exists a convex $\phi \colon {\R^d}\to\R$ such that $T^*=\nabla\phi$.
A trivial but essential observation is that
if $\refrho\in\dens$ is a reference density with support $K\subset{\R^d}$,
and $\rho_0=(G_0)_\#\refrho$ with a measurable $G_0 \colon K\to{\R^d}$,
then~\eqref{eq:W2} can be re-written as follows:
\begin{align}
  \label{eq:W3}
  \wass(\rho_0,\rho_1) =
  \inf\myset{\int_K \|G(\omega)-G_0(\omega)\|^2\refrho(\omega)\dd\omega}
  {G \colon K\to{\R^d}\ \text{measurable},\,G_\#\refrho=\rho_1}^{\frac12},
\end{align}
and the essentially unique minimizer $G^*$ in~\eqref{eq:W3}
is related to the optimal map $T^*$ in~\eqref{eq:W2} via $G^*=T^*\circ G_0$.

$\wass$ is a metric on $\dens$;
convergence in $\wass$ is equivalent to weak-$\star$ convergence in $L^1({\R^d})$
and convergence of the second moment.
Since $P$ and hence also $h$ are of super-linear growth at infinity,
each sublevel set $\aent$ is weak-$\star$ closed
and thus complete with respect to $\wass$.

As already mentioned above, solutions $\rho$ to~\eqref{eq:NFPsystem}
constitute a gradient flow for the functional $\aent$ from
\eqref{eq:aent} in the metric space $(\dens;\wass)$. In fact, the
flow is even $\lambda$-contractive as a semi-group, thanks to the
$\lambda$-uniform displacement convexity of $\aent$
(see McCann~\cite{art:McCann}, or Daneri and Savar\'e~\cite{art:DS}), which is a strengthened form of
$\lambda$-uniform convexity along geodesics.
The $\lambda$-contractivity of the flow implies various properties
(see Ambrosio et~al.~\cite[Section 11.2]{book:AGS})
like global existence, uniqueness and regularity
of the flow, monotonicity of $\aent$ and its sub-differential,
uniform exponential estimates on the convergence (if $\lambda<0$)
or divergence (if $\lambda\ge0$) of trajectories, quantified
exponential rates for the approach to equilibrium (if $\lambda<0$)
and the like.

An important further consequence is that the unique flow can be obtained as the limit for $\tau \searrow0$
of the time-discrete \emph{minimizing movement scheme} (see Ambrosio et~al.~\cite{book:AGS} and Jordan, Kinderlehrer and Otto~\cite{art:JKO}):
\begin{equation}
  \label{eq:mmrho}
  \rho_\tau^{n} := \argmin_{\rho \in \dens}\aent_\tau(\rho;\rho_\tau^{n-1}),
  \quad \aent_\tau(\rho,\hat\rho):=\frac{1}{2\tau}\wass(\rho,\hat\rho)^{2} + \aent(\rho).
\end{equation}
This time discretization is well-adapted to approximate $\lambda$-contractive gradient flows.
All of the properties of mentioned above
are already reflected on the level of these time-discrete solutions.

\subsection{Gradient flow in $L^2$}
Equation~\eqref{eq:GG} is the gradient flow of $\ent$ on the space $L^2(K\to{\R^d};\refrho)$
of square integrable (with respect to $\refrho$) maps $G \colon K\to{\R^d}$
(see Evans et~al.~\cite{art:EGS} or Jordan et~al.~\cite{art:JMO}).
However, the variational structure behind this gradient flow is much weaker than above:
most notably, $\ent$ is only poly-convex, but \emph{not $\lambda$-uniformly convex}.
Therefore, the abstract machinery for $\lambda$-contractive gradient flows in Ambrosio et~al.~\cite{book:AGS} does not apply here.
Clearly, by equivalence of~\eqref{eq:NFPsystem} and~\eqref{eq:GG} at least for sufficiently smooth solutions,
certain properties of the primary gradient flow are necessarily inherited by this secondary flow,
but for instance $\lambda$-contractivity of the flow in the $L^2$-norm seems to fail.

Nevertheless, it can be proven (see Ambrosio, Lisini and Savar\'e~\cite{art:ALS}) that the gradient
flow is globally well-defined, and it can again be approximated by
the minimizing movement scheme:
\begin{align}
  \label{eq:mmG}
  G_\tau^n:=\argmin_{G\in L^2(K\to{\R^d};\refrho)}\ent_\tau\big(G;G_\tau^{n-1}\big),
  \quad
  \ent_\tau(G;\hat G)=
  \frac1{2\tau}\int_K\|G-\hat G\|^2\,\dd\refrho + \ent(G|\refrho).
\end{align}
In fact, there is an equivalence between~\eqref{eq:mmG} and
\eqref{eq:mmrho}: simply substitute $(G_\tau^{n-1})_\#\refrho$ for
$\rho_\tau^{n-1}$ and $G_\#\refrho$ for $\rho$ in
\eqref{eq:mmrho}; notice that any $\rho\in\dens$ can be written as
$G_\#\refrho$ with a suitable (highly non-unique) choice of $G\in
L^2(K\to{\R^d};\refrho)$. This equivalence was already
exploited in Carrillo et~al.~\cite{art:CRW,art:CaMo09}.
Thanks to the equality~\eqref{eq:W3},
the minimization with respect to $\rho=G_\#\refrho$ can be relaxed
to a minimization with respect to $G$. Consequently, if
$(G_\tau^0)_\#\refrho=\rho_\tau^0$, then
$(G_\tau^n)_\#\refrho=\rho_\tau^n$ at all discrete times
$n=1,2,\ldots$. However, while the functional
$\aent_\tau(\cdot;\rho_\tau^{n-1})$ in~\eqref{eq:mmrho} is
$(\lambda+\tau^{-1})$-uniformly convex in $\rho$ along geodesics
in $\wass$, the functional $\ent_\tau(\cdot;G_\tau^{n-1})$ in
\eqref{eq:mmG} has apparently no useful convexity properties in
$G$ on $L^2(K\to{\R^d};\refrho)$.

\section{Definition of the numerical scheme}
\label{sec:SchemeDefn}
Recall the Lagrangian formulation of~\eqref{eq:NFPsystem} that has been given in Lemma~\ref{lem:Lagrange}.
For definiteness, fix a reference density $\refrho\in\dens$,
whose support $K\subset{\R^d}$ is convex and compact.

\subsection{Discretization in space}
Our spatial discretization is performed using a finite subspace of linear maps for the Lagrangian maps $G$.
More specifically:
let $\triang$ be some (finite) simplicial decomposition of $K$ 
with nodes $\omega_{1}$ to $\omega_{L}$ and $n$-simplices $\Delta_{1}$ to $\Delta_{M}$.
In the case $d=2$, which is of primary interest here, $\triang$ is a triangulation, with triangles $\Delta_m$.
The reference density $\refrho$ is approximated by a density $\refrho_\triang\in\dens$
that is piecewise constant on the simplices of $\triang$,
with respective values
\begin{align}
  \label{eq:mu}
  \refrho_\triang^m := \frac{\mu_\triang^m}{|\Delta_m|}
  \quad \text{for the simplex masses}\quad
  \mu_\triang^m:=\int_{\Delta_m}\refrho(\omega)\dd\omega.
\end{align}
The finite dimensional ansatz space $\ansatz$ is now defined as the set of maps $G\colon K\to{\R^d}$ that are
globally continous, affine on each of the simplices $\Delta_m\in\triang$, and orientation preserving.
That is, on each $\Delta_m\subset\triang$, the map $G\in\ansatz$ can be written as follows:
\begin{align}
  \label{eq:Gform}
  G(\omega) = A_{m} \omega + b_{m} \qquad \text{for all } \omega \in \Delta_{m},
\end{align}
with a suitable matrix $A_{m} \in \R^{d\times d}$ of positive determinant and a vector $b_{m} \in \R^d$.
% Clearly, $\ansatz$ is a open set inside the $dL$-dimensional space of \emph{all} maps $T \colon K\to\R^d$
% that are piecewise affine with respect to $\triang$.
% Notice, however, that $\ansatz$ is \emph{not} convex with respect to linear interpolation of maps;
% for instance, in $d=2$, both the identity and the negative identity map are orientation preserving,
% but their linear interpolation contains the trival map, sending all points to zero.

For the calculations that follow, we shall use a more geometric way to describe the maps $G\in\ansatz$,
namely by the positions $G_\ell=G(\omega_\ell)$ of the images of each node $\omega_\ell$.
Denote by $\Gansatz\subset\R^{L\cdot d}$ the space of $L$-tuples $\vecG=(G_\ell)_{\ell=1}^L$
of points $G_\ell\in{\R^d}$ with the same simplicial combinatorics (including orientation) as the $\omega_\ell$ in $\triang$.
Clearly, any $G\in\ansatz$ is uniquely characterized by the $L$-tuple $\vecG$ of its values,
and moreover, any $\vecG\in\Gansatz$ defines a $G\in\ansatz$.

More explicitly, fix a $\Delta_m\in\triang$,
with nodes labelled $\omega_{m,0}$ to $\omega_{m,d}$ in some orientation preserving order,
and respective image points $G_{m,0}$ to $G_{m,d}$.
With the standard $d$-simplex given by
\[
\stdtri^d:= \myset{\xi=(\xi_1,\ldots,\xi_d)\in\Rnn^d}{\sum_{j=1}^d\xi_j\le1},
\]
introduce the linear interpolation maps $r_m \colon \stdtri^d\to K$ and $q_m \colon \stdtri^d\to{\R^d}$ by
\begin{align*}
  r_m(\xi) &= \omega_{m,\zs} + \sum_{j=1}^d(\omega_{m,j}-\omega_{m,\zs})\xi_j, \\
  q_m(\xi) &= G_{m,\zs} + \sum_{j=1}^d(G_{m,j}-G_{m,\zs})\xi_j.
\end{align*}
Then the affine map~\eqref{eq:Gform} equals to $q_m\circ r_m^{-1}$.
In particular, we obtain that
\begin{align}
  \label{eq:det}
  \det A_m = \frac{\det\df q_m}{\det\df r_m} = \frac{\det Q_\triang^m[G]}{2|\Delta_m|}
  \quad\text{where}\quad
  Q_\triang^m[G]:=\big(G_{m,1}-G_{m,\zs}\big|\cdots\big|G_{m,d}-G_{m,\zs}\big).
\end{align}
For later reference, we give a more explicit representation
for the transformed entropy $\ent$ for $G\in\ansatz$,
and for the $L^2$-distance between two maps $G,\hat G\in\ansatz$.
Substitution of the special form~\eqref{eq:Gform} into~\eqref{eq:ent} produces
\begin{align}
  \label{eq:dent}
  \ent(G|\refrho_\triang)
  =\sum_{\Delta_m\in\triang}\mu_\triang^m \big[\entdens_\triang^m(G)+\potdens_\triang^m(G)\big]
\end{align}
with the internal energy (recall the definition of $\widetilde h$ from~\eqref{eq:ent})
\begin{align*}
  \entdens_\triang^m(G)
  :=\widetilde{h} \left( \frac{\det A_m}{\refrho_\triang^m} \right)
  = \widetilde h\left(\frac{\det Q_\triang^m[G]}{2\mu_\triang^m}\right)
\end{align*}
and the potential energy
\begin{align*}
  \potdens_\triang^m(G)
  = \aint_{\Delta_m}V(A_m\omega+b_m)\dd \omega
  = \aint_{\stdtri}V\big(r_m(\omega)\big)\dd\omega.
\end{align*}
% We emphasize that $\widetilde h$ is a convex function,
% since $h$ is convex, and
% \[ \widetilde h''(s) = \frac1{s^3}h''\left(\frac1s\right). \]
For the $L^2$-difference of $G$ and $G^*$,
we have
\begin{align}
  \label{eq:ddist}
  \|G-G^*\|_{L^2(K;\refrho_\triang)}^2
  =\int_K\|G-G^*\|^2\refrho_\triang\dd \omega
  = \sum_{\Delta_m\in\triang}\mu_\triang^m \distdens_\triang^m(G,G^*).
\end{align}
Using Lemma~\ref{lem:triint}, we obtain on each simplex $\Delta_m$:
\begin{align}
  \nonumber
  \distdens_\triang^m(G,G^*)
  &:=\aint_{\Delta_m}\|G(\omega)-G^*(\omega)\|^2\dd\omega \\
  \nonumber
  &= \aint_{\stdtri}\|r_m(\omega)-r_m^*(\omega)\|^2\dd\omega \\
  \label{eq:distdens}
  &= \frac2{(d+1)(d+2)}\sum_{0\le i\le j\le d}(G_{m,i}-G^*_{m,i})\cdot(G_{m,j}-G^*_{m,j}).
\end{align}

\subsection{Discretization in time}
Let a time step $\tau>0$ be given;
in the following, we symbolize the spatio-temporal discretization by $\disc$,
and we write $\disc\to0$ for the joint limit of $\tau\to0$ and vanishing mesh size in $\triang$.

The discretization in time is performed in accordance with~\eqref{eq:mmG}:
we modify $\ent_\tau$ from~\eqref{eq:mmG} by restriction to the ansatz space $\ansatz$.
This leads to the minimization problem
\begin{align}
  \label{eq:mini}
  G_\disc^n:=\argmin_{G\in\ansatz}\ent_\disc\big(G;G_\disc^{n-1}\big)
  \quad\text{where}\quad
  \ent_\disc(G;G^*) = \frac1{2\tau}\|G-G^*\|_{L^2(K;\refrho_\triang)}^2 + \ent(G|\refrho_\triang).
\end{align}
For a fixed discretization $\disc$,
the fully discrete scheme is well-posed in the sense that
for a given initial map $G_\disc^0\in\ansatz$,
an associated sequence $(G_\disc^n)_{n\ge0}$ can be determined
by successive solution of the minimization problems~\eqref{eq:mini}.
One only needs to verify:
\begin{lemma}
\label{lem:minexist}
  For each given $G^*\in\ansatz$, there exists at least one global minimizer $G\in\ansatz$
  of $\ent_\disc(\cdot;G^*)$.
\end{lemma}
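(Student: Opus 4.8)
The plan is to apply the direct method of the calculus of variations to the functional $\ent_\disc(\cdot;G^*)$, viewed as a function of the finitely many nodal positions. Recall that every $G\in\ansatz$ is uniquely encoded by its tuple of nodal images $\vecG\in\Gansatz\subset\R^{L\cdot d}$, and that $\Gansatz$ is an \emph{open} subset of $\R^{L\cdot d}$, since the orientation-preserving constraint amounts to the open conditions $\det Q_\triang^m[G]>0$ for every simplex $\Delta_m$. Thus the task reduces to minimizing a continuous function over an open subset of a finite-dimensional space, and the only genuine issue is to prevent a minimizing sequence from escaping either to infinity or to the boundary $\partial\Gansatz$ on which some simplex degenerates.

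First I would record that the functional is bounded below with finite infimum. Since $h\ge0$, the transformed density satisfies $\widetilde h(s)=s\,h(s^{-1})\ge0$ for $s>0$, and since $V\ge0$ the potential contributions are non-negative; together with the non-negative distance term this gives $\ent_\disc(\cdot;G^*)\ge0$. On the other hand $G^*$ is itself an admissible competitor with $\ent_\disc(G^*;G^*)=\ent(G^*|\refrho_\triang)<\infty$, so the infimum $I:=\inf_{\ansatz}\ent_\disc(\cdot;G^*)$ lies in $[0,\infty)$. Fix a minimizing sequence $(\vecG^{(k)})\subset\Gansatz$. Compactness follows from coercivity of the distance term: by \eqref{eq:ddist} one has $\tfrac1{2\tau}\|G^{(k)}-G^*\|_{L^2(K;\refrho_\triang)}^2\le\ent_\disc(G^{(k)};G^*)$, which is bounded along the sequence, so the nodes $\vecG^{(k)}$ remain in a bounded subset of $\R^{L\cdot d}$; passing to a subsequence, $\vecG^{(k)}\to\vecG^\infty$ in $\R^{L\cdot d}$.

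The crux is to show $\vecG^\infty\in\Gansatz$, i.e.\ that no simplex collapses in the limit, and this is precisely where the growth hypothesis $\lim_{s\to\infty}sh''(s)=+\infty$ enters. Integrating this condition twice shows $h(u)/u\to+\infty$ as $u\to\infty$, equivalently $\widetilde h(s)=s\,h(s^{-1})\to+\infty$ as $s\to0^+$. Since by \eqref{eq:dent} the internal energy $\sum_m\mu_\triang^m\,\widetilde h\big(\det Q_\triang^m[G^{(k)}]/(2\mu_\triang^m)\big)$ stays bounded along the minimizing sequence and each summand is non-negative, every individual term is bounded; as $\widetilde h$ blows up at $0^+$, this forces each $\det Q_\triang^m[G^{(k)}]$ to remain bounded away from $0$. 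Hence $\det Q_\triang^m[G^\infty]>0$ for every $m$, so the limit retains the correct orientation and $\vecG^\infty\in\Gansatz$.

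Finally, on the open set $\Gansatz$ the functional $\ent_\disc(\cdot;G^*)$ is continuous in the node positions: the distance term \eqref{eq:ddist} is polynomial, each $\det Q_\triang^m[\,\cdot\,]$ is polynomial and stays positive, $\widetilde h$ is continuous on $\Rp$, and $V\in C^2$ makes each $\potdens_\triang^m$ continuous. Therefore $\ent_\disc(G^\infty;G^*)=\lim_k\ent_\disc(G^{(k)};G^*)=I$, so $G^\infty$ is the desired global minimizer. I expect the non-degeneracy step to be the only delicate point; the coercivity, compactness, and continuity inputs are the routine ingredients of the direct method.
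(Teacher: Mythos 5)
Your proposal is correct and follows essentially the same route as the paper's proof: reduce to a finite-dimensional minimization over the nodal positions, use the non-negative metric term for coercivity/boundedness, and use the blow-up of $\widetilde h$ at $0^+$ to keep the simplex determinants bounded away from zero so that the limit stays in the open set $\ansatz$. The only (cosmetic) difference is that you argue via a minimizing sequence while the paper shows compactness of the sublevel set $S_c=\{\ent_\disc(\cdot;G^*)\le\ent(G^*|\refrho_\triang)\}$; your explicit derivation of $\widetilde h(s)\to+\infty$ as $s\downarrow0$ from the standing assumption $\lim_{s\to\infty}sh''(s)=+\infty$ is a detail the paper leaves implicit.
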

\begin{rmk}
  We \emph{do not} claim uniqueness of the minimizers.
  Unfortunately,
  the minimization problem~\eqref{eq:mini} inherits the lack of convexity from~\eqref{eq:mmG},
  whereas the correspondence between~\eqref{eq:mmG} and the convex problem~\eqref{eq:mmrho}
  is lost under spatial discretization.
  A detailed discussion of $\ent_\disc$'s (non-)convexity is provided in Appendix~\ref{sct:notconvex}.
\end{rmk}
\begin{proof}[Proof of Lemma~\ref{lem:minexist}]
  We only sketch the main arguments.
  For definiteness, let us choose (just for this proof) one of the
  infinitely many equivalent norm-induced metrics
  on the $dL$-dimensional vector space $V_\triang$ of all continuous maps $G \colon K\to\R^d$
  that are piecewise affine with respect to the fixed simplicial decomposition $\triang$:
  given $G,G'\in V_\triang$ with their respective point locations $\vecG,\vecG'\in\R^{dL}$,
  i.e., $\vecG=(G_\ell)_{\ell=1}^L$ for $G_\ell=G(\omega_\ell)$,
  define the distance between these maps
  as the maximal $\R^d$-distance $\|G_\ell-G'_\ell\|$ of corresponding points $G_\ell\in\vecG$, $G'_\ell\in\vecG'$.
  Clearly, this metric makes $V_\triang$ a complete space.

  It is easily seen that the subset $\ansatz$
  --- which is singled out by requiring orientation preservation of the $G$'s ---
  is an open subset of $V_\triang$.
  It is further obvious that the map $G\mapsto\ent_\disc(G;G^*)$
  is continuous with respect to the metric.
  The claim of the lemma thus follows if we can show that the sub-level
  \[ S_c:=\myset{G\in\ansatz}{\ent_\disc(G;G^*)\le c} \quad \text{with}\quad
    c:=\ent(G^*|\refrho_\triang) \]
  is a non-empty compact subset of $V_\triang$.
  Clearly, $G^*\in S_c$, so it suffices to verify compactness.

  \emph{$S_c$ is bounded.}
  We are going to show that there is a radius $R>0$ such that no $G\in S_c$ has a distance larger than $R$ to $G^*$.
  From non-negativity of $\ent$,
  and from the representations~\eqref{eq:ddist} and~\eqref{eq:distdens},
  it follows that
  \begin{align*}
    c\ge\frac1{2\tau}\|G-G^*\|_{L^2(K;\refrho_\triang)}^2
    &\ge\frac{\underline\mu_\triang}{2\tau}\sum_{\Delta_m\in\triang}\distdens_\triang^m(G,G^*) \\
    &= \frac{\underline\mu_\triang}{(d+1)(d+2) \tau}\sum_{0\le i\le j\le d}(G_{m,i}-G^*_{m,i})\cdot(G_{m,j}-G^*_{m,j})\\
    &\ge \frac{\underline\mu_\triang}{2(d+1)(d+2) \tau}\sum_{\ell=1}^L\|G_\ell-G_\ell^*\|^2,
  \end{align*}
  where $\underline\mu_\triang=\min_{\Delta_m}\mu_\triang^m$.
  It is now easy to compute a suitable value for the radius $R$.

  \emph{$S_c$ is a closed subset of $V_\triang$.}
  It suffices to show that the limit $\overline G\in V_\triang$
  of any sequence $(G^{(k)})_{k=1}^\infty$  of maps $G^{(k)}\in S_c$ belongs to $\ansatz$.
  By definition of our metric on $V_\triang$,
  global continuity and piecewise linearity of the $G^{(k)}$ trivially pass to the limit $\overline G$.
  We still need to verify that $\overline G$ is orientation-preserving.
  Fix a simplex $\Delta_m$ and consider the corresponding matrices $A_m^{(k)}$ and $\overline A_m$ from~\eqref{eq:Gform}.
  Since the $G^{(k)}$ converge to $\overline G$ in the metric, also $A_m^{(k)}\to\overline A_m$ entry-wise.
  Now, by non-negativity of $\widetilde h$, we have for all $k$ that
  \begin{align*}
    c\ge\ent(G^{(k)}|\refrho_\triang) \ge \mu_\triang^m\widetilde h\left(\frac{\det A^{(k)}_m}{\refrho_\triang^m}\right),
  \end{align*}
  and since $\widetilde h(s)\to+\infty$ as $s\downarrow0$,
  it follows that $\det A^{(k)}_m>0$ is bounded away from zero, uniformly in $k$.
  But then also $\det\overline A_m>0$, i.e., the $m$th linear map piece of the limit $\overline G$ preserves orientation.
\end{proof}

\subsection{Fully discrete equations}
We shall now derive the Euler-Lagrange equations associated to the minimization problem~\eqref{eq:mini},
i.e., for each given $G^*:=G_\disc^{n-1}\in\ansatz$,
we calculate the variations of $\ent_\disc(G;G^*)$ with respect to the degrees of freedom of $G\in\ansatz$.
Since that function is a weighted sum over the triangles $\Delta_m\in\triang$,
it suffices to perform the calculations for one fixed triangle $\Delta_m$,
with respective nodes $\omega_{m,0}$ to $\omega_{m,d}$, in positive orientation.
% Without loss of generality, let the numbering be such that $\omega_\ell=\omega_{m,0}$.
The associated image points are $G_{m,0}$ to $G_{m,d}$.
Since we may choose any vertex to be labelled $\omega_{m,0}$, it will suffice to perform the calculations at one fixed image point $G_{m,0}$.
% We obtain:
%
\begin{itemize}
\item \emph{mass term:}
  \begin{align*}
    \frac{\pd}{\pd G_{m,0}}\distdens_\triang^m(G,G^*)
    &=\frac2{(d+1)(d+2)}\frac{\pd}{\pd G_{m,0}}\sum_{0\le i\le j\le d}(G_{m,i}-G^*_{m,i})\cdot(G_{m,j}-G^*_{m,j}) \\
    &=\frac2{(d+1)(d+2)}\left(2(G_{m,0}-G^*_{m,0})+\sum_{j=1}^d(G_{m,j}-G^*_{m,j})\right)
  \end{align*}
\item \emph{internal energy:}
  observing that --- recall~\eqref{eq:Ph} ---
  \begin{align}
    \label{eq:h2P}
    \widetilde h'(s) = \frac{\dn}{\dd s}\big[sh(s^{-1})\big] = h(s^{-1})-s^{-1}h'(s^{-1}) = -P(s^{-1}),
  \end{align}
  we obtain
  \begin{align*}
    \frac{\pd}{\pd G_{m,0}}\entdens_\triang^m(G)
    = \frac{\pd}{\pd G_{m,0}} \widetilde h\left(\frac{\det Q_\triang^m[G]}{2\mu_\triang^m}\right)
    = \frac1{2\mu_\triang^m}P\left(\frac {2\mu_\triang^m}{\det Q_\triang^m[G]}\right)\nml_\triang^m[G],
  \end{align*}
  where
  \begin{align}
    \label{eq:Pi}
    \nml_\triang^m[G]
    := -  \frac{\pd}{\pd G_{m,0}} \det Q_\triang^m[G]
    = (\det Q_\triang^m[G])\, (Q_\triang^m[G])^{-T}\sum_{j=1}^d\ee_j
  \end{align}
  is the uniquely determined vector in $\R^d$ that is
  orthogonal to the $(d-1)$-simplex with corners $G_{m,1}$ to $G_{m,d}$ (pointing \emph{away} from $G_{m,0}$)
  and whose length equals the $(d-1)$-volume of that simplex.
\item \emph{potential energy:}
  \begin{align*}
    \frac{\pd}{\pd G_{m,0}}\potdens_\triang^m(G)
    =\frac{\pd}{\pd G_{m,0}}\aint_{\stdtri} V\big(r_m(\xi)\big)\dd\xi
    = \aint_{\stdtri} \nabla V\big(r_m(\xi)\big)\,(1-\xi_1-\cdots-\xi_d)\dd\xi.
  \end{align*}
\end{itemize}
Now let $\omega_\ell$ be a fixed vertex of $\triang$.
Summing over all simplices $\Delta_m$ that have $\omega_\ell$ as a vertex,
and choosing vertex labels in accordance with above, i.e.,
such that $\omega_{m,0}=\omega_\ell$ in $\Delta_m$,
produces the following Euler-Lagrange equation:
\begin{align}
  \label{eq:EL}
  \begin{split}
    0 &= \sum_{\omega_\ell\in\Delta_m}\mu_\triang^m\bigg[
    \frac1{(d+1)(d+2)\tau}\bigg(2(G_{m,0}-G^*_{m,0})+\sum_{j=1}^d(G_{m,j}-G^*_{m,j})\bigg) \\
    & \qquad +\frac1{2\mu_\triang^m}P\left(\frac {2\mu_\triang^m}{\det Q_\triang^m[G]}\right)\nml_\triang^m[G]
    +  \aint_{\stdtri} \nabla V\big(r_m(\xi)\big)\,(1-\xi_1-\cdots-\xi_d)\dd\xi
    \bigg].
  \end{split}
\end{align}

\subsection{Approximation of the initial condition}
For the approximation $\rho^0_\disc=(G^0_\disc)_\#\refrho_\triang$ of the initial datum $\rho^0=G^0_\#\refrho$,
we require:
\begin{itemize}
\item $\rho^0_\disc$ converges to $\rho^0$ narrowly;
\item $\aent(\rho^0_\disc)$ is $\disc$-uniformly bounded, i.e.,
  \begin{align}
    \label{eq:maxaent}
    \overline\aent:=\sup\aent(\rho_\disc^0) < \infty.
  \end{align}
\end{itemize}
In our numerical experiments, we always choose $\refrho:=\rho^0$,
in which case $G^0 \colon K\to\R^d$ can be taken as the identity on $K$,
and we choose accordingly $G^0_\disc$ as the identity as well.
Hence $\rho^0_\disc=\refrho_\triang$, which converges to $\rho^0=\refrho$ even strongly in $L^1(K)$.
Moreover, since $h$ is convex, it easily follows from Jensen's inequality that
\[ \int_{\Delta_m} h\big(\refrho(x)\big)\dd x \ge |\Delta_m|h(\refrho_\triang^m), \]
and therefore,
\[ \aent(\rho_\disc^0) \le \aent(\rho^0). \]
In more general situations, in which $G^0$ is not the identity,
a sequence of approximations $G^0_\disc$ of $G^0$ is needed.
Pointwise convergence $G^0_\disc\to G^0$ is more than sufficient to guarantee
narrow convergence of $\rho_\disc^0$ to $\rho^0$,
but the uniform bound~\eqref{eq:maxaent} might require a well-adapted approximation,
especially for non-smooth $G^0$'s.

\section{Limit trajectory}
\label{sec:LimitTraj}
In this section, we assume that a sequence of vanishing
discretizations $\disc\to0$ is given, and we study the respective
limit of the fully discrete solutions $(G_\disc^n)_{n\ge0}$ that
are produced by the inductive minimization procedure
\eqref{eq:mini}. For the analysis of that limit trajectory, it is
more natural to work with the induced densities and velocities,
\begin{align*}
  \rho_\disc^n:=(G_\disc^n)_\#\refrho,
  \quad
  \velo_\disc^n:=\frac{\id-G_\disc^{n-1}\circ(G_\disc^n)^{-1}}\tau,
\end{align*}
instead of the Lagrangian maps $G_\disc^n$ themselves.
Note that $\velo_\disc^n$ is only well-defined on the support of $\rho_\disc^n$
--- that is, on the image of $G_\disc^n$ ---
and can be assigned arbitrary values outside.
Let us introduce the piecewise constant in time interpolations
$\intrho_\disc \colon [0,T]\times{\R^d}\to\Rnn$,
and $\intvelo_\disc \colon [0,T]\times{\R^d}\to\R^d$
as usual,
\begin{align*}
  \intrho_\disc(t) = \rho_\disc^n, \quad \intvelo_\disc(t) = \velo_\disc^n
  \quad \text{with $n$ such that $t\in((n-1)\tau,n\tau]$}.
\end{align*}
Note that $\intrho(t,\cdot)\in\dens$ and $\intvelo_\disc(t,\cdot)\in L^2({\R^d}\to\R^d;\intrho_\disc(t,\cdot))$
at each $t\ge0$.

\subsection{Energy estimates}
We start by proving the classical energy estimates on minimizing movements for our fully discrete scheme.
\begin{lemma}
  \label{lem:ee}
  For each discretization $\disc$ and
  for any indices $\overline n>\underline n\ge 0$,
  one has the a priori estimate
  \begin{align}
    \label{eq:esum}
    \aent(\rho_\disc^{\overline n})
    +\frac\tau2\sum_{n=\underline n+1}^{\overline n}\left(\frac{\wass(\rho_\disc^n,\rho_\disc^{n-1})}{\tau}\right)^2
    \le \aent(\rho^{\underline n}).
  \end{align}
  Consequently:
  \begin{enumerate}
  \item $\ent$ is monotonically decreasing, i.e., $\aent(\intrho_\disc(t))\le\aent(\intrho_\disc(s))$ for all $t\ge s\ge0$;
  \item $\intrho_\disc$ is H\"older-$\frac12$-continuous in $\wass$, up to an error $\tau$,
    \begin{align}
      \label{eq:wholder}
      \wass\big(\intrho_\disc(t),\intrho_\disc(s)\big) \le \sqrt{2\aent(\rho_\disc^0)}\big(|t-s|^{\frac12}+\tau^{\frac12}\big)
      \quad \text{for all $t\ge s\ge0$}.
    \end{align}
  \item $\intvelo_\disc$ is square integrable with respect to $\intrho_\disc$,
    \begin{align}
      \label{eq:veloest}
      \int_0^T\int_{\R^d}\|\intvelo_\disc\|^2\intrho_\disc\dd x\dd t \le 2\aent(\rho_\disc^0).
    \end{align}
  \end{enumerate}
\end{lemma}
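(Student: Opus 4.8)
The plan is to run the standard energy (dissipation) estimate for minimizing movements; the only twist is that the metric built into the scheme is the $L^2$ distance on maps rather than the Wasserstein distance on densities. The starting point is the \emph{one-step comparison}: since $G_\disc^n$ minimizes $\ent_\disc(\cdot;G_\disc^{n-1})$ over $\ansatz$ in \eqref{eq:mini}, and $G_\disc^{n-1}\in\ansatz$ is itself admissible, one has $\ent_\disc(G_\disc^n;G_\disc^{n-1})\le\ent_\disc(G_\disc^{n-1};G_\disc^{n-1})=\ent(G_\disc^{n-1}|\refrho_\triang)$. Recalling from \eqref{eq:ent} that $\ent(G_\disc^n|\refrho_\triang)=\aent((G_\disc^n)_\#\refrho_\triang)=\aent(\rho_\disc^n)$, this reads
\[ \frac1{2\tau}\norm{G_\disc^n-G_\disc^{n-1}}_{L^2(K;\refrho_\triang)}^2 + \aent(\rho_\disc^n) \le \aent(\rho_\disc^{n-1}). \]
This single inequality already contains all three assertions.

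Next I would convert the $L^2$ map distance into the Wasserstein distance. Since $\refrho_\triang\in\dens$ is supported in $K$, $\rho_\disc^{n-1}=(G_\disc^{n-1})_\#\refrho_\triang$, and $G_\disc^n$ pushes $\refrho_\triang$ forward to $\rho_\disc^n$, the map $G_\disc^n$ is an admissible competitor in the infimum \eqref{eq:W3} defining $\wass(\rho_\disc^{n-1},\rho_\disc^n)$. Hence
\[ \wass(\rho_\disc^{n-1},\rho_\disc^n)^2 \le \norm{G_\disc^n-G_\disc^{n-1}}_{L^2(K;\refrho_\triang)}^2. \]
Crucially, this is an \emph{inequality} in exactly the favourable direction: it bounds $\wass$ from above by the quantity the scheme dissipates. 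Inserting it into the one-step comparison and telescoping from $n=\underline n+1$ to $\overline n$ (the entropy terms collapse) yields \eqref{eq:esum} after rewriting $\tfrac1{2\tau}\wass^2=\tfrac\tau2(\wass/\tau)^2$.

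The three consequences then follow by routine manipulations of \eqref{eq:esum}. For (1), the dissipation sum is nonnegative, so $\aent(\rho_\disc^{\overline n})\le\aent(\rho_\disc^{\underline n})$ at discrete times, which transfers to the piecewise-constant interpolant $\intrho_\disc$. For (2), I would chain the triangle inequality for $\wass$ over the steps between $s$ and $t$, apply Cauchy--Schwarz to pass from $\sum\wass$ to the product of $(\overline n-\underline n)^{1/2}$ and $(\sum\wass^2)^{1/2}$, bound $\sum\wass^2\le 2\tau\aent(\rho_\disc^0)$ using \eqref{eq:esum} together with monotonicity, and estimate $(\overline n-\underline n)\tau\le\abs{t-s}+\tau$; subadditivity of the square root, $\sqrt{\abs{t-s}+\tau}\le\abs{t-s}^{1/2}+\tau^{1/2}$, produces the error term in \eqref{eq:wholder}. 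For (3), a change of variables $x=G_\disc^n(\omega)$ under the push-forward, using $\intvelo_\disc\circ G_\disc^n=\tau^{-1}(G_\disc^n-G_\disc^{n-1})$, gives $\int_{\R^d}\norm{\intvelo_\disc}^2\intrho_\disc\dd x=\tau^{-2}\norm{G_\disc^n-G_\disc^{n-1}}_{L^2(K;\refrho_\triang)}^2$ on each time slab, so the one-step comparison controls $\tfrac\tau2\int\norm{\velo_\disc^n}^2\rho_\disc^n\dd x$ by $\aent(\rho_\disc^{n-1})-\aent(\rho_\disc^n)$; summing over $n$ telescopes to \eqref{eq:veloest}.

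The estimate is essentially soft, and I expect no serious obstacle. The one point that deserves care — and where the argument departs from the textbook minimizing-movement estimate — is the relation between the two metrics: because we minimize only over the finite-dimensional $\ansatz$ rather than over all of $L^2(K\to\R^d;\refrho_\triang)$, the increment $G_\disc^n-G_\disc^{n-1}$ is generally \emph{not} the optimal transport map between $\rho_\disc^{n-1}$ and $\rho_\disc^n$, so $\wass$ and the $L^2$ map distance are genuinely different. One must therefore check that the inequality runs the right way for each assertion — upward for the entropy/Wasserstein estimate \eqref{eq:esum}, and directly through the $L^2$ distance (bypassing $\wass$ entirely) for the velocity estimate \eqref{eq:veloest} — which is precisely what happens.
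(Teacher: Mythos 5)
Your proposal is correct and follows essentially the same route as the paper: the one-step minimizer comparison against $G_\disc^{n-1}$, the bound $\wass(\rho_\disc^{n-1},\rho_\disc^n)^2\le\|G_\disc^n-G_\disc^{n-1}\|_{L^2(K;\refrho_\triang)}^2$ via the admissibility of $G_\disc^n$ in~\eqref{eq:W3}, telescoping, the triangle plus Cauchy--Schwarz argument for the H\"older estimate, and the change of variables $x=G_\disc^n(\omega)$ for the velocity bound. Your closing remark correctly identifies the one point where care is needed, namely that the increment is not the optimal map, so the inequality between the two metrics only runs one way --- which is exactly the direction the paper uses.
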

\begin{proof}
  By the definition of $G_\disc^n$ as a minimizer,
  we know that $\ent_\disc(G_\disc^n;G_\disc^{n-1})\le\ent_\disc(G;G_\disc^{n-1})$ for any $G\in\ansatz$,
  and in particular for the choice $G:=G_\disc^{n-1}$,
  which yields:
  \begin{align}
    \label{eq:basicee}
    \frac1{2\tau}\int_K\|G_\disc^n-G_\disc^{n-1}\|^2\refrho_\triang\dd\omega +\ent(G_\disc^n|\refrho_\triang)
    \le \ent(G_\disc^{n-1}|\refrho_\triang).
  \end{align}
  Summing these inequalies for $n=\underline n+1,\ldots,\overline n$,
  recalling that $\aent(\rho_\disc^n)=\ent(G_\disc^n|\refrho_\triang)$ by~\eqref{eq:ent}
  and that $\wass(\rho_\disc^n,\rho_\disc^{n-1})^2\le\int_K|G_\disc^n-G_\disc^{n-1}|^2\refrho\dd\omega$ by~\eqref{eq:W3},
  produces~\eqref{eq:esum}.

  Monotonicity of $\aent$ in time is obvious.

  To prove~\eqref{eq:wholder}, choose $\underline n\le\overline n$ such that
  $s\in((\underline n-1)\tau,\underline n\tau]$ and $t\in((\overline n-1)\tau,\overline n\tau]$.
  Notice that $\tau(\overline n-\underline n)\le t-s+\tau$.
  If $\underline n=\overline n$, the claim~\eqref{eq:wholder} is obviously true;
  let $\underline n<\overline n$ in the following.
  Combining the triangle inequality for the metric $\wass$,
  estimate~\eqref{eq:esum} above
  and H\"older's inequality for sums,
  we arrive at
  \begin{align*}
    \wass\big(\intrho_\disc(t),\intrho_\disc(s)\big)
    = \wass(\rho_\disc^{\overline n},\rho_\disc^{\underline n})
    & \le \sum_{n=\underline n+1}^{\overline n}\wass(\rho_\disc^n,\rho_\disc^{n-1}) \\
    & \le \left[\sum_{n=\underline n+1}^{\overline n}\tau\right]^{\frac12}
    \left[\sum_{n=\underline n+1}^{\overline n}\frac{\wass(\rho_\disc^n,\rho_\disc^{n-1})^2}\tau\right]^{\frac12} \\
    & = \big[\tau(\overline n-\underline n) \big]^{\frac12}
    \left[\tau\sum_{n=\underline n+1}^{\overline n}\left(\frac{\wass(\rho_\disc^n,\rho_\disc^{n-1})}{\tau}\right)^2\right]^{\frac12} \\
    & \le [t-s+\tau]^{\frac12} \big[2\big(\aent(\rho_\disc^{\underline n}) - \aent(\rho_\disc^{\overline n})\big)\big]^{\frac12}
    \le \big[|t-s|^{\frac12}+\tau^{\frac12}\big]\aent(\rho_\disc^0)^{\frac12}.
  \end{align*}

  %It remains to prove~\eqref{eq:veloest}.
  Finally, changing variables using $x=G_\disc^n(\omega)$ in~\eqref{eq:basicee} yields
  \begin{align*}
    \frac\tau2\int_{\R^d}\|\velo_\disc^n\|^2\rho_\disc^n\dd x + \ent(G_\disc^n) \le \ent(G_\disc^{n-1}),
  \end{align*}
  and summing these inequalities from $n=1$ to $n=N_\tau$ yields~\eqref{eq:veloest}.
\end{proof}

\subsection{Compactness of the trajectories and weak formulation}
Our main result on the weak limit of $\intrho_\disc$ is the following.
\begin{thm}
  \label{thm:trajectory}
  Along a suitable sequence $\disc\to0$,
  the curves $\intrho_\disc \colon \Rnn\to\dens$ convergence pointwise in time,
  i.e., $\intrho_\disc(t)\to\rho_*(t)$ narrowly for each $t>0$,
  towards a H\"older-$\frac12$-continuous limit trajectory $\rho_* \colon \Rnn\to\dens$.

  Moreover, the discrete velocities $\intvelo_\disc$
  possess a limit $\velo_*\in L^2(\Rnn\times{\R^d};\rho_*)$
  such that $\intvelo_\disc\intrho_\disc\weakstarto\velo_*\rho_*$ in $L^1(\Rnn\times{\R^d})$,
  and the continuity equation
  \begin{align}
    \label{eq:continuity}
    \partial_t\rho_* + \Div(\rho_*\velo_*) = 0
  \end{align}
  holds in the sense of distributions.
\end{thm}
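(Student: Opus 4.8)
The plan is to run the classical minimizing-movements convergence argument, extracting all the compactness from the a priori estimates of Lemma~\ref{lem:ee}. I would split the proof into three stages: temporal compactness of the densities, a one-step discrete continuity equation with controlled error, and weak compactness of the momenta followed by passage to the limit.

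\emph{Compactness of the densities.} The H\"older bound~\eqref{eq:wholder} makes the curves $\intrho_\disc$ equi-continuous in $\wass$ up to the additive error $\tau^{1/2}$, with modulus controlled by $\sqrt{2\overline\aent}$ through~\eqref{eq:maxaent}. For each fixed $t$ in a bounded interval, combining~\eqref{eq:wholder} with the uniform bound on the second moments of the initial data (supported in the fixed compact $K$) gives a uniform bound on the second moment of $\intrho_\disc(t)$; by Markov's inequality this yields tightness and uniform integrability of $\|x\|^2$, hence relative compactness of $\{\intrho_\disc(t)\}_\disc$ in $(\dens,\wass)$. A refined Arzel\`a--Ascoli argument (see~\cite[Proposition 3.3.1]{book:AGS}) then produces a subsequence $\disc\to0$ and a limit curve $\rho_*$ with $\intrho_\disc(t)\to\rho_*(t)$ narrowly for every $t$. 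Narrow lower semicontinuity of $\wass$ passes~\eqref{eq:wholder} to the limit, so that $\rho_*$ is genuinely H\"older-$\tfrac12$ continuous and the spurious $\tau^{1/2}$ term disappears.

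\emph{Discrete continuity equation.} The definition of $\velo_\disc^n$ gives $G_\disc^{n-1}=G_\disc^n-\tau\,\velo_\disc^n\circ G_\disc^n$, so for $\varphi\in C_c^\infty(\R^d)$ a second-order Taylor expansion along this displacement, followed by the change of variables $x=G_\disc^n(\omega)$, produces the one-step identity
\begin{align*}
  \int_{\R^d}\varphi\,\dd\rho_\disc^n-\int_{\R^d}\varphi\,\dd\rho_\disc^{n-1}
  &=\tau\int_{\R^d}\nabla\varphi\cdot\velo_\disc^n\,\dd\rho_\disc^n+R_\disc^n, \\
  |R_\disc^n| &\le\tfrac{\tau^2}2\|\df^2\varphi\|_\infty\int_{\R^d}\|\velo_\disc^n\|^2\,\dd\rho_\disc^n.
\end{align*}
The decisive point is that, after summation in $n$, the total remainder is bounded by $\tfrac\tau2\|\df^2\varphi\|_\infty\cdot2\overline\aent$ by the velocity estimate~\eqref{eq:veloest}, hence vanishes as $\disc\to0$.

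\emph{Momenta and passage to the limit.} By Cauchy--Schwarz, \eqref{eq:veloest} bounds the total variation of the momentum measures $\mathbf m_\disc:=\intvelo_\disc\intrho_\disc\,\dd t$ on $[0,T]\times\R^d$ uniformly; thus, along a further subsequence, $\mathbf m_\disc\weakstarto\mathbf m_*$ while $\intrho_\disc\,\dd t\weakstarto\rho_*\,\dd t$. Testing the one-step identity against $\psi\in C_c^\infty((0,\infty)\times\R^d)$ and summing by parts in time, the telescoping kills the boundary contributions; the main terms converge to $\int\partial_t\psi\,\rho_*$ and $\int\nabla\psi\cdot\dd\mathbf m_*$ and the remainder vanishes, yielding $\partial_t\rho_*+\Div\mathbf m_*=0$ distributionally. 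The one remaining point --- and the main obstacle --- is to identify $\mathbf m_*=\velo_*\rho_*$ with $\velo_*\in L^2(\rho_*)$. This rests on the joint convexity and weak-$\star$ lower semicontinuity of the Benamou--Brenier action $(\rho,\mathbf m)\mapsto\int\|\mathbf m\|^2/\rho$: applied to the pair $(\rho_*,\mathbf m_*)$ it gives $\int_0^T\!\int\|\mathbf m_*\|^2/\rho_*\le\liminf\int_0^T\!\int\|\intvelo_\disc\|^2\intrho_\disc\le2\overline\aent<\infty$, which forces $\mathbf m_*\ll\rho_*\,\dd t$ with square-integrable density $\velo_*$. This lower-semicontinuity lemma is the only ingredient beyond elementary estimates and bookkeeping built on Lemma~\ref{lem:ee}.
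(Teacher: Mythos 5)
Your proposal is correct and follows the paper's skeleton for the first and third stages --- the H\"older estimate \eqref{eq:wholder} feeding into the refined Arzel\`a--Ascoli theorem (AGS Proposition~3.3.1), and the one-step Taylor expansion of $\int\varphi\,\dd\rho_\disc^n-\int\varphi\,\dd\rho_\disc^{n-1}$ with remainder summable to $\bigO(\tau)$ via \eqref{eq:veloest}, which is exactly the paper's two-fold evaluation of the quantity $J_\disc[\phi]$. Where you genuinely diverge is in the identification of the limit velocity. The paper lifts the pair $(\intrho_\disc,\intvelo_\disc)$ to a plan $\intplan_\disc\in\prb([0,T]\times\R^d\times\R^d)$ with uniformly bounded second $v$-moment, invokes AGS Theorem~5.4.4 to extract a narrow limit $\plan_*$, and defines $\velo_*$ as the barycenter \eqref{eq:velolimit} of the disintegration; the convergence $\intvelo_\disc\intrho_\disc\weakstarto\velo_*\rho_*$ and the inherited $L^2$-bound come for free from that theorem. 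You instead extract a weak-$\star$ limit $\mathbf m_*$ of the momentum measures directly (total variation bounded by Cauchy--Schwarz and \eqref{eq:veloest}) and then use joint convexity and lower semicontinuity of the Benamou--Brenier action to force $\mathbf m_*\ll\rho_*\,\dd t$ with density $\velo_*\in L^2(\rho_*)$. Both routes are standard and correct --- indeed the lower-semicontinuity lemma you cite is essentially a corollary of the very theorem the paper invokes --- so the difference is largely one of packaging: the Young-measure route retains the full limit plan (useful if one later wants Jensen-type inequalities against nonlinear functions of $v$), while yours is leaner and needs only the action functional.

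One small imprecision worth fixing: from the uniform second-moment bound and Markov's inequality you get tightness, hence relative compactness for the \emph{narrow} topology, which is all that the refined Arzel\`a--Ascoli argument requires; it does not give uniform integrability of $\|x\|^2$ and hence not relative compactness in $(\dens,\wass)$ as you claim. Also note that the initial data $\rho_\disc^0=(G_\disc^0)_\#\refrho_\triang$ are supported on the images $G_\disc^0(K)$ rather than on $K$ itself; the tightness of that family is what the hypothesis of narrow convergence $\rho^0_\disc\to\rho^0$ supplies. Neither point affects the validity of the rest of your argument.
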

\begin{rmk}
  The H\"older continuity of $\rho_*$ implies that $\rho_*$ satisfies the initial condition~\eqref{eq:NFPic}
  in the sense that $\rho_*(t)\to\rho^0$ narrowly as $t\downarrow0$.
\end{rmk}
\begin{proof}[Proof of Theorem~\ref{thm:trajectory}]
  We closely follow an argument that is part of the general convergence proof
  for the minimizing movement scheme as given in Ambrosio et~al.~\cite[Section 11.1.3]{book:AGS}.
  Below, convergence is shown for some arbitrary but fixed time horizon $T>0$;
  a standard diagonal argument implies convergence at arbitrary times.

  First observe that by estimate~\eqref{eq:wholder} --- applied with $0=s\le t\le T$ ---
  it follows that $\wass(\intrho_\disc(t),\rho_\disc^0)$ is bounded,
  uniformly in $t\in[0,T]$ and in $\disc$.
  Since further $\rho^0_\disc$ converges narrowly to $\rho^0$ by our hypotheses on the initial approximation,
  we conclude that all densities $\intrho_\disc(t)$ belong to a sequentially compact subset
  for the narrow convergence.
  The second observation is that the term on the right hand side of~\eqref{eq:wholder}
  simplifies to $(2\overline\aent)^\frac12|t-s|^\frac12$ in the limit $\disc\to0$.
  A straightforward application of the ``refined version'' of
  the Ascoli-Arzel\`{a} theorem (Proposition~3.3.1 in Ambrosio et~al.~\cite{book:AGS})
  yields the first part of the claim,
  namely the pointwise narrow convergence of $\intrho_\disc$
  towards a H\"older continuous limit curve $\rho_*$.

  It remains to pass to the limit with the velocity $\intvelo_\disc$.
  Towards that end, we define a probability measure $\intplan_\disc\in\prb(Z_T)$
  on the set $Z_T:=[0,T]\times\R^d\times\R^d$ as follows:
  \begin{align*}
    \int_{Z_T}\varphi(t,x,v)\dd\intplan_\disc(t,x,v)
    = \int_0^T\int_{\R^d} \varphi\big(t,x,\intvelo_\disc(t,x)\big)\,\intrho_\disc(t,x)\dd x\frac{\dn t}{T},
  \end{align*}
  for every bounded and continuous function $\varphi\in C^0_b(Z_T)$.
  For brevity, let $\intM_\disc\in\prb([0,T]\times\R^d)$ be the $(t,x)$-marginals of $\intplan_\disc$,
  that have respective Lebesgue densities $\frac{\rho_\disc(t,x)}T$ on $[0,T]\times\R^d$.
  Thanks to the result from the first part of the proof,
  $\intM_\disc$ converges narrowly to a limit $M_*$, which has density $\frac{\rho_*(t,x)}{T}$.
  On the other hand, the estimate~\eqref{eq:veloest} implies that
  \begin{align*}
    \int_{Z_T} |v|^2\dd\intplan_\disc(t,x,v)
    = \int_{[0,T]\times\R^d}|\intvelo_\disc(t,x)|^2\dd\intM_\disc(t,x)
    \le 2\overline\aent.
  \end{align*}
  We are thus in the position to apply Theorem~5.4.4 in Ambrosio et~al.~\cite{book:AGS},
  which yields the narrow convergence of $\intplan_\disc$ towards a limit $\plan_*$.
  Clearly, the $(t,x)$-marginal of $\plan_*$ is $M_*$.
  Accordingly, we introduce the disintegration $\plan_{(t,x)}$ of $\plan_*$ with respect to $M_*$,
  which is well-defined $M_*$-a.e..
  Below, it will turn out that $\plan_*$'s $v$-barycenter,
  \begin{align}
    \label{eq:velolimit}
    \velo_*(t,x) := \int_{\R^d} v \dd\gamma_{(t,x)}(v),
  \end{align}
  is the sought-for weak limit of $\intvelo_\disc$.
  The convergence $\intvelo_\disc\intrho_\disc\weakstarto\velo_*\rho_*$
  and the inheritance of the uniform $L^2$-bound~\eqref{eq:veloest} to the limit $\velo_*$
  are further direct consequences of Theorem~5.4.4 in Ambrosio et~al.~\cite{book:AGS}.

  The key step to establish the continuity equation for the just-defined $\velo_*$
  is to evaluate the limit as $\disc\to0$ of
  \begin{align*}
    J_\disc[\phi] := \frac1\tau\left[
    \int_0^T\int_{\R^d}\phi(t,x)\intrho_\disc(t,x)\dd x\dd t
    - \int_0^T\int_{\R^d}\phi(t,x)\intrho_\disc(t-\tau,x)\dd x\dd t
    \right]
  \end{align*}
  for any given test function $\phi\in C^\infty_c((0,T)\times\R^d)$
  in two different ways.
  First, we change variables $t\mapsto t+\tau$ in the second integral, which gives
  \begin{align*}
    J_\disc[\phi]
    = \int_0^T\int_{\R^d} \frac{\phi(t,x)-\phi(t+\tau,x)}\tau\intrho_\disc(t,x)\dd x\dd t
    \stackrel{\disc\to0}{\longrightarrow}
      -\int_0^T\int_{\R^d} \partial_t\phi(t,x)\,\rho_*(t,x)\dd x\dd t.
  \end{align*}
  For the second evaluation, we write
  \begin{align*}
    \rho_\disc^{n-1}
    = \big(G_\disc^{n-1}\circ(G_\disc^n)^{-1}\big)_\#\rho_\disc^n
    = \big(\id-\tau\velo_\disc^n\big)_\#\rho_\disc^n,
  \end{align*}
  and substitute accordingly $x\mapsto x-\tau\intvelo_\disc(t,x)$ in the second integral,
  leading to
  \begin{align*}
    J_\disc[\phi]
    &= \int_0^T\int_{\R^d} \frac{\phi(t,x)-\phi\big(t,x-\tau\intvelo_\disc(t,x)\big)}\tau\intrho_\disc(t,x)\dd x\dd t \\
    &= \int_0^T\int_{\R^d} \nabla\phi(t,x)\cdot\intvelo_\disc(t,x)\intrho_\disc(t,x)\dd x\dd t + \mathfrak{e}_\disc[\phi]\\
    &= \int_{Z_T} \nabla\phi(t,x)\cdot v\dd\intplan_\disc(t,x,v) + \mathfrak{e}_\disc[\phi] \\
    &\stackrel{\disc\to0}{\longrightarrow}
      \int_{Z_T} \nabla\phi(t,x)\cdot v\dd\plan_*(t,x,v) \\
    &\qquad= \int_{[0,T]\times\R^d}\nabla\phi(t,x)\cdot\left[\int_{\R^d}v\dd\plan_{(t,x)}(v)\right]\dd M_*(t,x) \\
    &\qquad = \int_0^T\int_{\R^d}\nabla\phi(t,x)\cdot\velo_*(t,x)\rho_*(t,x)\dd x\dd t.
  \end{align*}
  The error term $\mathfrak{e}_\disc[\phi]$ above is controlled
  via Taylor expansion of $\phi$ and by using~\eqref{eq:veloest},
  \begin{align*}
    \big|\mathfrak{e}_\disc[\phi]\big|
    \le \int_0^T\int_{\R^d}\frac\tau2\|\phi\|_{C^2}\big\|\intvelo_\disc(t,x)\big\|^2\intrho_\disc(t,x)\dd x\dd t
    \le \overline\aent\|\phi\|_{C^2}T\;\tau.
  \end{align*}
  Equality of the limits for both evaluations of $J_\disc[\phi]$ for arbitrary test functions $\phi$
  shows the continuity equation~\eqref{eq:continuity}.
\end{proof}

Unfortunately, the convergence provided by Theorem~\ref{thm:trajectory}
is generally not sufficient to conclude that $\rho_*$ is a weak solution to~\eqref{eq:NFPsystem},
since we are not able to identify $\velo_*$ as $\velo[\rho_*]$ from~\eqref{eq:velo}.
The problem is two-fold:
first, weak-$\star$ convergence of $\intrho_\disc$ is insufficient to pass to the limit inside the nonlinear function $P$.
Second, even if we would know that, for instance, $P(\intrho_\disc)\weakstarto P(\rho_*)$,
we would still need a $\disc$-independent a priori control
on the regularity (e.g., maximal diameter of triangles) of the meshes generated by the $G_\disc^n$
to justify the passage to limit in the weak formulation below.
% This is very different from the one-dimensional situation \cite{MO1,MO2},
% where a diameter of a Lagrangian cell implied low density in that cell.

The main difficulty in the weak formulation that we derive now is
that we can only use ``test functions'' that are piecewise affine
with respect to the changing meshes generated by the $G_\disc^n$.
For definiteness, we introduce the space
\begin{align*}
  \test(\triang):=\myset{\Gamma:K\to\R^d}{\text{$\Gamma$ is globally continuous, and is piecewise affine w.r.t. $\Delta_m$}}.
\end{align*}
\begin{lemma}
  Assume $S \colon {\R^d}\to\R^d$ is such that $S\circ G_\disc^n\in\test(\triang)$.
  Then:
  \begin{equation}
    \label{eq:WeakFormS}
    \int_{{\R^d}} P(\rho_\disc^{n}) \, \Div S \dd x - \int_{{\R^d}} \Grad V \cdot S\, \rho_\disc^{n} \dd x
    = \int_{{\R^d}} S\cdot\velo_\disc^n \rho_\disc^{n} \dd x.
  \end{equation}
\end{lemma}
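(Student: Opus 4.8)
The plan is to read \eqref{eq:WeakFormS} as the statement that the Euler--Lagrange equation \eqref{eq:EL} vanishes when tested against the field $S$ sampled at the image nodes. The starting point is the observation that $W:=S\circ G_\disc^n$ is an admissible variation: since $W\in\test(\triang)$ it is a continuous, piecewise affine map on $\triang$, and perturbing $G_\disc^n$ inside the open set $\ansatz$ along $W$ displaces each image node $G_\ell=G_\disc^n(\omega_\ell)$ in the direction $W(\omega_\ell)=S(G_\ell)$. Because $G_\disc^n$ is an interior minimizer of $\ent_\disc(\,\cdot\,;G_\disc^{n-1})$ from \eqref{eq:mini}, the directional derivative along $W$ vanishes; reorganizing the node sum over (simplex, vertex) pairs, this is the assertion that the sum over all $\Delta_m$ and all their vertices of the three bracketed terms in \eqref{eq:EL}, each contracted with $S(G_{m,k})$, equals zero. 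I would then evaluate the three contributions separately.

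For the mass term I would sum over the vertices and change variables $x=G_\disc^n(\omega)$. Recalling that $\velo_\disc^n(G_\disc^n(\omega))=\tau^{-1}\big(G_\disc^n(\omega)-G_\disc^{n-1}(\omega)\big)$ by the definition of the discrete velocity and that $\rho_\disc^n=(G_\disc^n)_\#\refrho_\triang$, the $\tfrac1\tau$-prefactor cancels and the term collapses to $\int_{\R^d}S\cdot\velo_\disc^n\,\rho_\disc^n\dd x$, the right-hand side of \eqref{eq:WeakFormS}. The potential term is treated identically: its per-vertex contribution assembles into $\int_K\Grad V(G_\disc^n(\omega))\cdot S(G_\disc^n(\omega))\,\refrho_\triang\dd\omega$, which the same change of variables turns into $\int_{\R^d}\Grad V\cdot S\,\rho_\disc^n\dd x$.

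The internal-energy term is the heart of the argument. By \eqref{eq:h2P} and \eqref{eq:Pi}, the contribution attached to vertex $G_{m,k}$ of simplex $\Delta_m$ is $\tfrac12 P\big(\rho_\disc^n|_{G_\disc^n(\Delta_m)}\big)\,\nml_{m,k}$, where I use that the argument $\tfrac{2\mu_\triang^m}{\det Q_\triang^m}$ of $P$ in \eqref{eq:EL} is exactly the constant value that $\rho_\disc^n$ takes on the image simplex $G_\disc^n(\Delta_m)$ (by \eqref{eq:det} and the push-forward formula), and $\nml_{m,k}$ is the area-weighted outward normal of the face of $G_\disc^n(\Delta_m)$ opposite $G_{m,k}$. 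The key geometric identity I must establish is that, because $S\circ G_\disc^n\in\test(\triang)$ forces $S$ to be affine on each image simplex, the vertex-sampled normal sum reproduces the divergence: writing $S$ in the barycentric coordinates $\lambda_k$ of $G_\disc^n(\Delta_m)$ and using $\nml_{m,k}=-d\,|G_\disc^n(\Delta_m)|\,\Grad\lambda_k$ together with $\Div S=\sum_k S(G_{m,k})\cdot\Grad\lambda_k$ gives $\sum_{k}\nml_{m,k}\cdot S(G_{m,k})=-d\,|G_\disc^n(\Delta_m)|\,\Div S$; here $\sum_k\nml_{m,k}=0$ ensures the constant part of $S$ drops out. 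In the planar case $d=2$ one has $|G_\disc^n(\Delta_m)|=\tfrac12\det Q_\triang^m$ and the prefactor $\tfrac12 d=1$, so summing over all simplices collapses this contribution to exactly $-\int_{\R^d}P(\rho_\disc^n)\,\Div S\dd x$.

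Adding the three evaluated pieces, the vanishing directional derivative reads $\int S\cdot\velo_\disc^n\rho_\disc^n\dd x-\int P(\rho_\disc^n)\Div S\dd x+\int\Grad V\cdot S\,\rho_\disc^n\dd x=0$, which is a rearrangement of \eqref{eq:WeakFormS}. I expect the main obstacle to be precisely the normal-sum identity and its normalization: one must verify both that the zero-sum of face normals kills the affine constant of $S$ and that the surviving linear part returns $\Div S$ with the correct multiple, and it is exactly the planar normalization built into \eqref{eq:det} that makes the factor $\tfrac12 d$ equal to one and the constants line up.
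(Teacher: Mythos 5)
Your argument is correct and lands on the right identity, but it takes a genuinely different computational route from the paper. The paper never passes through the nodal Euler--Lagrange system \eqref{eq:EL}: it perturbs the minimizer directly by $G_\eps=(\id+\eps S)\circ G_\disc^n$, uses minimality to get a one-sided inequality on the difference quotient of $\ent_\disc$, evaluates the $\eps\searrow0$ limit of each of the three terms at the level of the integrals over $K$ (the internal energy via $\df G_\eps=\df(\id+\eps S)\circ G_\disc^n\cdot\df G_\disc^n$ and $\tfrac{\dn}{\dn\eps}\det(\idm+\eps\df S)\big|_{\eps=0}=\tr[\df S]=\Div S$), obtains equality by running the argument with $\pm S$, and only then changes variables. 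You instead contract the already-derived per-node equations \eqref{eq:EL} with $S(G_\ell)$ and reassemble, which requires your normal-sum identity $\sum_k\nml_{m,k}\cdot S(G_{m,k})=-2\,|G_\disc^n(\Delta_m)|\,\Div S$ as an extra geometric lemma; the two routes are of course the same first variation of $\ent_\disc$ along $S\circ G_\disc^n$, so they must agree, and your bookkeeping of the constants in $d=2$ is right. What the paper's route buys is that it works verbatim in any dimension $d$ and avoids both the nodal bookkeeping and the normalization of $\nml$ (note that the paper's description of $\nml$ as having length equal to the $(d-1)$-volume of the opposite face, and the factor $\tfrac1{2\mu_\triang^m}$ in \eqref{eq:EL}, are themselves $d=2$-specific, so your version inherits that restriction explicitly, whereas the lemma is stated for general $d$). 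What your route buys is a useful structural interpretation: \eqref{eq:WeakFormS} is literally the Euler--Lagrange system tested against the nodal values of $S$, with the pressure term arising from a discrete integration-by-parts encoded in the zero-sum of the face normals. Both are valid proofs; yours would benefit from stating and proving the normal-sum identity as a separate lemma and from flagging that, as written, it is confined to the planar case.
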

\begin{proof}
  For all sufficiently small $\eps>0$, let $G_\eps = (\id+S)\circ G_\disc^n$.
  By definition of $G_\disc^n$ as a minimizer,
  we have that $\ent_\disc(G_\eps;G_\disc^{n-1})\ge\ent_\disc(G_\disc^n;G_\disc^{n-1})$.
  This implies that
  \begin{align}
    \label{eq:weakintegral}
    \begin{split}
      0&\le \frac1{\eps}\int_K\bigg(\frac1{2\tau}\big[\|G_\eps-G_\disc^{n-1}\|^2-\|G_\disc^n-G_\disc^{n-1}\|^2\big] \\
      & \qquad + \left[\widetilde h\left(\frac{\det\df G_\eps}{\refrho_\triang}\right)
        -\widetilde h\left(\frac{\det\df G_\disc^n}{\refrho_\triang}\right)\right]
      + \big[V\circ G_\eps-V\big]\bigg) \refrho_\triang\dd\omega.
    \end{split}
  \end{align}
  We discuss limits of the three terms under the integral for $\eps\searrow0$.
  For the metric term:
  \begin{align*}
    \frac1{2\tau\eps}\big[\|G_\eps-G_\disc^{n-1}\|^2-\|G_\disc^n-G_\disc^{n-1}\|^2\big]
    &= \frac{G_\disc^n-G_\disc^{n-1}}\tau\cdot\frac{G_\eps-G_\disc^n}\eps + \frac1{2\tau\eps}\|G_\eps-G_\disc^n\|^2 \\
    &= \left[\left(\frac{\id-T_\disc^n}\tau\right)\cdot S\right]\circ G_\disc^n + \frac{\eps}{2\tau}\|S\|^2\circ G_\disc^n,
  \end{align*}
  and since $S$ is bounded, the last term vanishes uniformly on $K$ for $\eps\searrow0$.
  For the internal energy, since $\df G_\eps=\df(\id+\eps S)\circ G_\disc^n\cdot\df G_\disc^n$,
  and recalling~\eqref{eq:h2P},
  \begin{align*}
    \frac1\eps\left[\widetilde h\left(\frac{\det\df G_\eps}{\refrho_\triang}\right)
      -\widetilde h\left(\frac{\det\df G_\disc^n}{\refrho_\triang}\right)\right]
    & = \frac1\eps\left[\widetilde h\left(\frac{\det\df G_\disc^n}{\refrho_\triang}\det(\idm+\eps\df S)\circ G_\disc^n\right)
      -\widetilde h\left(\frac{\det\df G_\disc^n}{\refrho_\triang}\right)\right] \\
    & \stackrel{\eps\searrow0}{\longrightarrow} \frac{\det\df G_\disc^n}{\refrho_\triang}\widetilde h'\left(\frac{\det\df G_\disc^n}{\refrho_\triang}\right)
    \left(\lim_{\eps\searrow0}\frac{\det(\idm+\eps\df S)}{\eps}\right)\circ G_\disc^n \\
    & = -\frac{\det\df G_\disc^n}{\refrho_\triang} P\left(\frac{\refrho_\triang}{\det\df G_\disc^n}\right)\tr[\df S]\circ G_\disc^n \\
    &= - \frac{\det\df G_\disc^n}{\refrho_\triang}\big[P(\rho^n)\,\Div S\big]\circ G_\disc^n.
  \end{align*}
  Since the piecewise constant function $\det\df G_\disc^n$ has a positive lower bound,
  the convergence as $\eps\searrow0$ is uniform on $K$.
  Finally, for the potential energy,
  \begin{align*}
    \frac1\eps\big[V\circ(\id+\eps S)\circ G_\disc^n-V\circ G_\disc^n\big]
    \stackrel{\eps\searrow0}{\longrightarrow}
    \big[\Grad V\cdot S\big]\circ G_\disc^n.
  \end{align*}
  Again, the convergence is uniform on $K$.
  Passing to the limit in the integral~\eqref{eq:weakintegral} yields
  \begin{align*}
    0 &\le \int_K \left[\left(\frac{\id-T_\disc^n}\tau\right)\cdot S\right]\circ G_\disc^n\refrho_\triang\dd\omega \\
    & \qquad - \int_K \big[P(\rho^n)\,\Div S\big]\circ G_\disc^n \det\df G_\disc^n\dd\omega
    + \int_K \big[\Grad V\cdot S\big]\circ G_\disc^n\refrho_\triang\dd\omega.
  \end{align*}
  The same inequality is true with $-S$ in place of $S$,
  hence this inequality is actually an equality.
  Since $\rho_\disc^n=(G_\disc^n)_\#\refrho_\triang$,
  a change of variables $x=S_\disc^n(\omega)$ produces~\eqref{eq:WeakFormS}.
\end{proof}
\begin{cor}
  \label{cor:limiteq}
  In addition to the hypotheses of Theorem~\ref{thm:trajectory},
  assume that
  \begin{enumerate}
  \item $P(\intrho_\disc)\weakstarto p_*$ in $L^1([0,T]\times\Omega)$;
  \item each $G_\disc^n$ is injective;
  \item as $\disc\to0$, all simplices in the images of $\triang$ under $G_\disc^n$
    have non-degenerate interior angles and tend to zero in diameter,
    uniformly w.r.t.\ $n$.
  \end{enumerate}
  Then $\rho_*$ satisfies the PDE
  \begin{align}
    \label{eq:limiteq}
    \partial_t\rho_* = \Delta p_* + \Div(\rho_*\Grad V)
  \end{align}
  in the sense of distributions.
\end{cor}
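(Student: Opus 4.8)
The plan is to combine the continuity equation already supplied by Theorem~\ref{thm:trajectory} with the discrete weak formulation~\eqref{eq:WeakFormS}, tested against piecewise-affine interpolants of $\Grad\phi$ on the \emph{moving} image meshes, and then to pass to the limit $\disc\to0$. Fix a test function $\phi\in C^\infty_c((0,T)\times\R^d)$. Since Theorem~\ref{thm:trajectory} gives $\partial_t\rho_*+\Div(\rho_*\velo_*)=0$ in the distributional sense, that is,
\[
\int_0^T\int_{\R^d}\partial_t\phi\,\rho_*\dd x\dd t = -\int_0^T\int_{\R^d}\Grad\phi\cdot\velo_*\,\rho_*\dd x\dd t,
\]
the weak form of~\eqref{eq:limiteq} will follow at once from the \emph{flux identity}
\begin{align*}
\int_0^T\int_{\R^d}\Grad\phi\cdot\velo_*\,\rho_*\dd x\dd t
&= \int_0^T\int_{\R^d} p_*\,\Laplace\phi\dd x\dd t \\
&\quad - \int_0^T\int_{\R^d}\rho_*\,\Grad V\cdot\Grad\phi\dd x\dd t. \tag{$\dagger$}
\end{align*}
So the whole task reduces to proving identity~$(\dagger)$.

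The key construction is a discrete vector field that is admissible in~\eqref{eq:WeakFormS}. For each $n$, set $\widetilde S_\disc(t,\cdot):=S_\disc^n$ on $t\in((n-1)\tau,n\tau]$, where $S_\disc^n$ is the continuous field that is affine on each image simplex $G_\disc^n(\Delta_m)$ and takes, at the image node $G_\disc^n(\omega_\ell)$, the value $\Grad\phi\big(n\tau,G_\disc^n(\omega_\ell)\big)$. By hypothesis (2) the map $G_\disc^n$ is injective, so $S_\disc^n$ is a well-defined single-valued field on the support of $\rho_\disc^n$ (extended arbitrarily elsewhere), and $S_\disc^n\circ G_\disc^n\in\test(\triang)$ by construction; hence~\eqref{eq:WeakFormS} applies with $S=S_\disc^n$. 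Multiplying~\eqref{eq:WeakFormS} by $\tau$, summing over $n$ and rewriting in terms of the piecewise-constant-in-time interpolants yields
\begin{align*}
\int_0^T\int_{\R^d} P(\intrho_\disc)\,\Div\widetilde S_\disc\dd x\dd t
&- \int_0^T\int_{\R^d}\Grad V\cdot\widetilde S_\disc\,\intrho_\disc\dd x\dd t \\
&= \int_0^T\int_{\R^d}\widetilde S_\disc\cdot\intvelo_\disc\,\intrho_\disc\dd x\dd t.
\end{align*}

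Next I would pass to the limit term by term, relying on the interpolation estimates $\widetilde S_\disc\to\Grad\phi$ and $\Div\widetilde S_\disc\to\Laplace\phi$, both in $L^\infty$ and uniformly in $n$ (established in the final step). For the first integral, split $P(\intrho_\disc)\,\Div\widetilde S_\disc = P(\intrho_\disc)\,\Laplace\phi + P(\intrho_\disc)\,(\Div\widetilde S_\disc-\Laplace\phi)$: the first summand converges to $\int_0^T\int_{\R^d} p_*\,\Laplace\phi$ by hypothesis (1), while the second is bounded by $\|\Div\widetilde S_\disc-\Laplace\phi\|_{L^\infty}\,\|P(\intrho_\disc)\|_{L^1}\to0$, the $L^1$-norm being bounded since a weakly convergent sequence is bounded. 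For the potential term, $\Grad V\cdot\widetilde S_\disc\to\Grad V\cdot\Grad\phi$ uniformly on $\supp\phi$ and $\intrho_\disc\to\rho_*$ narrowly at each time (then integrate in $t$ by dominated convergence), giving $\int_0^T\int_{\R^d}\rho_*\,\Grad V\cdot\Grad\phi$. For the flux term, use $\intvelo_\disc\intrho_\disc\weakstarto\velo_*\rho_*$ from Theorem~\ref{thm:trajectory} together with the uniform convergence of $\widetilde S_\disc$, the remainder being controlled by $\|\widetilde S_\disc-\Grad\phi\|_{L^\infty}$ times $\int_0^T\int_{\R^d}\|\intvelo_\disc\|\intrho_\disc$, which is finite by Cauchy--Schwarz and the velocity bound~\eqref{eq:veloest}. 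Assembling the three limits produces exactly~$(\dagger)$.

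The main obstacle is the interpolation step, and this is precisely where hypotheses (2) and (3) enter. One must show that the nodal interpolant on the time-dependent image mesh $G_\disc^n(\triang)$ obeys $\|\widetilde S_\disc-\Grad\phi\|_{L^\infty}=\bigO(\mesh)$ and $\|\Div\widetilde S_\disc-\Laplace\phi\|_{L^\infty}=\bigO(\mesh)$ with constants uniform in $n$. The gradient interpolation estimate $\|\Grad(w-I_h w)\|_{L^\infty(T)}\le C\,(h_T^2/\rho_T)\,\|w\|_{C^2}$ on a single simplex $T$ (with $w=\Grad\phi$) degenerates as the inradius $\rho_T$ collapses; it is the non-degeneracy of the interior angles in hypothesis (3), i.e.\ shape regularity with $h_T/\rho_T$ bounded, that keeps the constant under control, and the uniform vanishing of the diameters that drives the error to zero uniformly in $n$. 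The injectivity hypothesis (2) is what makes $S_\disc^n$ well defined on the support of $\rho_\disc^n$ and legitimises~\eqref{eq:WeakFormS} as an identity over $\R^d$ through the change of variables behind~\eqref{eq:pushdens}. A minor additional point is the time-discretisation error incurred by evaluating $\phi$ at $n\tau$ rather than at $t$, which is $\bigO(\tau)$ in the $C^2$-norm and is absorbed into the same estimates.
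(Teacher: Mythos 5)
Your proposal is correct and follows essentially the same route as the paper: both test the discrete weak formulation~\eqref{eq:WeakFormS} with piecewise-affine nodal interpolants on the moving image meshes, use hypothesis (3) for uniform-in-$n$ interpolation error control, hypothesis (1) for the pressure term, Theorem~\ref{thm:trajectory} for the flux term, and then combine with the continuity equation~\eqref{eq:continuity}. The only difference is cosmetic: the paper interpolates a general separable field $\eta(t)\zeta(x)$ and first derives the weak vector identity $\rho_*\velo_*=\Grad p_*+\rho_*\Grad V$, whereas you specialise immediately to $S=\Grad\phi$ and obtain the equivalent scalar flux identity $(\dagger)$ directly.
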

\begin{proof}
  Let a smooth test function $\zeta\in C^\infty_c(\R^d\to\R^d)$ be given.
  For each $\disc$ and each $n$,
  a $\zeta_\disc^n \colon \R^d\to\R^d$ with $\zeta_\disc^n\circ G_\disc^n\in\test(\triang)$
  can be constructed in such a way that
  \begin{align}
    \label{eq:togoodtobetrue}
    \zeta_\disc^n\to\zeta, \quad \Div\zeta_\disc^n\to\Div\zeta
  \end{align}
  uniformly on $\R^d$, and uniformly in $n$ as $\disc\to0$.
  This follows from our hypotheses on the $\disc$-uniform regularity of the Lagrangian meshes:
  inside the image of $G_\disc^n$, one can simply choose $\zeta_\disc^n$ as the affine interpolation
  of the values of $\zeta$ at the points $G_\disc^n(\omega_\ell)$.
  Outside, one can take an arbitrary approximation of $\zeta$ that is compatible
  with the piecewise-affine approximation on the boundary of $G_\disc^n$'s image;
  one may even choose $\zeta_\disc^n\equiv\zeta$ at sufficient distance to that boundary.
  The uniform convergences~\eqref{eq:togoodtobetrue} then follow
  by standard finite element analysis.

  Further, let $\eta\in C^\infty_c(0,T)$ be given.
  For each $t\in((n-1)\tau,n\tau]$,
  substitute $S(t,x):=\eta(t)\zeta_\disc^n(x)$ into~\eqref{eq:WeakFormS}.
  Integration of these equalities with respect to $t\in(0,T)$ yields
  \begin{align*}
    \int_0^T\int_{\R^d} P(\intrho_\disc)\Div S\dd x\dd t
    - \int_0^T\int_{\R^d} \Grad V\cdot S\dd x\dd t
    = \int_0^T\int_{\R^d} S\cdot\intvelo_\disc\intrho_\disc\dd x\dd t.
  \end{align*}
  We pass to the limit $\disc\to0$ in these integrals.
  For the first, we use that $P(\intrho)\weakstarto p_*$ by hypothesis,
  for the last, we use Theorem~\ref{thm:trajectory} above.
  Since any test function $S\in C^\infty_c((0,T)\times\Omega)$ can be approximated in $C^1$
  by linear combinations of products $\eta(t)\zeta(x)$ as above,
  we thus obtain the weak formulation of
  \begin{align*}
    \rho_*\velo_* = \Grad p_* + \rho_*\Grad V.
  \end{align*}
  In combination with the continuity equation~\eqref{eq:continuity},
  we arrive at~\eqref{eq:limiteq}.
\end{proof}
\begin{rmk}
  In principle, our discretization can also be applied to
  the \emph{linear} Fokker-Planck equation with $P(r)=r$ and $h(r)=r\log r$.
  In that case, one automatically has $P(\intrho)\weakstarto p_*\equiv P(\rho_*)$
  thanks to Theorem~\ref{thm:trajectory}.
  Corollary~\ref{cor:limiteq} above then provides an \emph{a posteriori} criterion for convergence:
  if the Lagrangian mesh does not deform too wildly under the dynamics as the discretization is refined,
  then the discrete solutions converge to the genuine solution.
\end{rmk}

\section{Consistency in 2D}
\label{sec:Consistency2D}
In this section, we prove consistency of our discretization in the following sense.
Under certain conditions on the spatial discretization $\triang$,
any smooth and positive solution $\rho$ to the initial value problem~\eqref{eq:NFPsystem}
projects to a discrete solution that satisfies the Euler-Lagrange equations up to a controlled error.
We restrict ourselves to $d=2$ dimensions.

\subsection{Smooth Lagrangian evolution}
First, we derive an alternative form of the velocity field $\velo$ from~\eqref{eq:velo} in terms of $G$.
\begin{lemma}
  For $\rho=G_\#\refrho$ with a smooth diffemorphism $G \colon K\to{\R^d}$,
  we have
  \begin{align}
    \velo[\rho]\circ G=\bigvelo[G]
    := P'\left(\frac{\refrho}{\det\df G}\right)\, (\df G)^{-T}\left(\tr_{12}\big[(\df G)^{-1}\df ^2G\big]^T-\frac{\nabla\refrho}{\refrho}\right) - \Grad V \circ G.
  \end{align}
  Consequently, the Lagrangian map $G$ --- relative to the reference density $\refrho$ ---
  for a smooth solution $\rho$ to~\eqref{eq:NFPsystem} satisfies
  \begin{align}
    \label{eq:Geq}
    \partial_t G = \bigvelo[G].
  \end{align}
\end{lemma}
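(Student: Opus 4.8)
The plan is to compute $\velo[\rho]\circ G$ directly from the definition $\velo[\rho]=-\nabla\big(h'(\rho)+V\big)$ in~\eqref{eq:velo}, transporting the spatial gradient back to the reference domain $K$ by the chain rule. Since $\nabla(\phi\circ G)=(\df G)^T\,(\nabla\phi)\circ G$ for any scalar $\phi$, one has $(\nabla\phi)\circ G=(\df G)^{-T}\nabla(\phi\circ G)$. Applying this with $\phi=h'(\rho)$ and $\phi=V$ separately, the potential contribution immediately yields $-\Grad V\circ G$, which is exactly the last term of $\bigvelo[G]$. Thus the whole problem reduces to rewriting $(\df G)^{-T}\nabla\big[h'(\rho)\circ G\big]$ in the claimed form.

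Next I would use the push-forward formula~\eqref{eq:pushdens}, which gives $\rho\circ G=\refrho/\det\df G$ on $K$, so that $h'(\rho)\circ G=h'\!\big(\refrho/\det\df G\big)$ is an explicit function of $G$ and its first derivatives. Writing $\sigma:=\refrho/\det\df G$ and differentiating gives $\nabla\big[h'(\rho)\circ G\big]=h''(\sigma)\,\nabla\sigma$. The key algebraic observation is that the pressure relation~\eqref{eq:Ph} forces $P'(r)=r\,h''(r)$, whence $h''(\sigma)=P'(\sigma)/\sigma$ and $h''(\sigma)\,\nabla\sigma=P'(\sigma)\,\nabla\log\sigma=P'(\sigma)\big(\tfrac{\nabla\refrho}{\refrho}-\nabla\log\det\df G\big)$. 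This already produces both the prefactor $P'\!\big(\refrho/\det\df G\big)$ and the term $\nabla\refrho/\refrho$ appearing in $\bigvelo[G]$.

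The remaining ingredient is the gradient of $\log\det\df G$. Here I would invoke Jacobi's formula $\pd_k\det\df G=(\det\df G)\,\tr\big[(\df G)^{-1}\pd_k\df G\big]$, so that the $k$-th component of $\nabla\log\det\df G$ equals $\tr\big[(\df G)^{-1}\pd_k\df G\big]$. Reassembling over $k$ and using the symmetry of the second derivatives identifies this vector precisely with the tensor contraction $\tr_{12}\big[(\df G)^{-1}\df^2G\big]^T$. Collecting the three contributions, inserting the overall minus sign from~\eqref{eq:velo}, and applying $(\df G)^{-T}$ then yields the stated expression $\velo[\rho]\circ G=\bigvelo[G]$.

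I expect the only genuine difficulty to be notational bookkeeping in the last step: one must fix an index convention for the three-tensor $\df^2G$, pin down which two slots the symbol $\tr_{12}$ contracts, and track the transpose so that $(\df G)^{-T}$ acts on the correct slot; once these conventions are matched to Jacobi's formula the verification is routine. Finally, equation~\eqref{eq:Geq} is immediate: by Lemma~\ref{lem:Lagrange} a Lagrangian map satisfies $\pd_tG_t=\velo[\rho_t]\circ G_t$, and substituting the just-derived identity $\velo[\rho]\circ G=\bigvelo[G]$ gives $\pd_tG=\bigvelo[G]$.
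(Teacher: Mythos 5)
Your proposal is correct and follows essentially the same route as the paper: both compute $\df\big[h'(\rho)\circ G\big]$ via the chain rule on one side and via explicit differentiation of $h'\big(\refrho/\det\df G\big)$ on the other, use the identity $rh''(r)=P'(r)$ coming from~\eqref{eq:Ph}, and identify the logarithmic derivative of $\det\df G$ with the contraction $\tr_{12}\big[(\df G)^{-1}\df^2G\big]$. The only difference is cosmetic: you make the Jacobi-formula step explicit, which the paper leaves implicit.
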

\begin{proof}
  On the one hand,
  \begin{align*}
    \df \big[h'(\rho)\circ G\big] = \big[\df h'(\rho)\big]\circ G\,\df G,
  \end{align*}
  and on the other hand, by definition of the push forward,
  \begin{align*}
    \df \big[h'(\rho)\circ G\big]
    &= \df h'\left(\frac{\refrho}{\det \df G}\right)  \\
    &= h''\left(\frac{\refrho}{\det \df G}\right)\,\left(\frac{\refrho}{\det \df G}\right)\,\left(\frac{\df \refrho}{\refrho}-\tr_{12}\big[(\df G)^{-1}\df ^2G \big]\right) \\
    &= \big[\rho h''(\rho)\big]\circ G \,\left(\frac{\df \refrho}{\refrho}-\tr_{12}\big[(\df G)^{-1}\df ^2G \big]\right).
  \end{align*}
  Hence
  \begin{align*}
    \Grad h'(\rho) \circ G
    = \big[\rho h''(\rho)\big]\circ G \,(\df G)^{-T}\left(\frac{\Grad\refrho}{\refrho}-\tr_{12}\big[(\df G)^{-1}\df ^2G \big]^T\right).
  \end{align*}
  Observing that~\eqref{eq:Ph} implies that $rh''(r)=P'(r)$, we conclude~\eqref{eq:Geq} directly from~\eqref{eq:velo}.
\end{proof}

\renewcommand{\zs}{\times}

\subsection{Discrete Euler-Lagrange equations in dimension $d=2$}
In the planar case $d=2$,
the Euler-Lagrange equation~\eqref{eq:EL} above can be rewritten in a more convenient way.

In the following, fix some vertex $\omega_\zs$ of the triangulation, which
is indicent to precisely six triangles.
For convenience, we assume that these are labelled $\Delta_0$ to $\Delta_5$ in counter-clockwise order.
Similarly, the six neighboring vertices are labeled $\omega_0$ to $\omega_5$ in counter-clockwise order,
so that $\Delta_k$ has vertices $\omega_{k}$ and $\omega_{k+1}$,
where we set $\omega_6:=\omega_0$.

Using these conventions and recalling Lemma~\ref{lem:JAJ},
the expression for the vector $\nml$ in~\eqref{eq:Pi} simplifies to
\begin{align*}
  \nml_\triang^k = - \jm (G_{k+1}-G_{k}),
  \quad\text{where}\quad
  \jm = \begin{pmatrix} 0 & -1 \\ 1 & 0 \end{pmatrix}.
\end{align*}
Summing the Euler-Lagrange equation~\eqref{eq:EL} over $\Delta_0$ to $\Delta_5$,
we obtain
\begin{align}
  \label{eq:dEL}
  \momentum_\zs = \impulse_\zs,
\end{align}
where the momentum term $\momentum_\zs$ and the impulse $\impulse_\zs$, respectively,
are given by
\begin{align}
  \momentum_\zs &= \frac1{12}\sum_{k=0}^5\mu_\triang^k
  \left[2\left(\frac{G_\zs-G^*_\zs}\tau\right)+\left(\frac{G_k-G^*_k}\tau\right)+\left(\frac{G_{k+1}-G^*_{k+1}}\tau\right)\right]  \\
  \impulse_\zs &= \sum_{k=0}^5 \mu_\triang^{k}\bigg[
  \frac1{2 \mu_\triang^{k}}P\left(\frac {2\mu_\triang^k}{\det(G_k-G_\zs|G_{k+1}-G_\zs)}\right)\jm(G_{k+1}-G_k) \\
  &\qquad -\aint_{\stdtri}\Grad V\big((1-\xi_1-\xi_2)G_\zs + \xi_1 G_{k}+ \xi_2 G_{k+1}\big)\,(1-\xi_1-\xi_2)\dd\xi
  \bigg].
\end{align}
We shall now prove our main result on consistency.
The setup is the following:
a sequence of triangulations $\triang_\eps$ on $K$, parametrized by $\eps>0$,
and a sequence of time steps $\tau_\eps=\bigO(\eps)$ are given.
We assume that there is an $\eps$-independent region $K'\subset K$
on which the $\triang_\eps$ are \emph{almost hexagonal} in the following sense:
each node $\omega_\zs\in K'$ of $\triang_\eps$ has precisely six neighbors
--- labelled $\omega_0$ to $\omega_5$ in counter-clockwise order ---
and there exists a rotation $R\in\mathrm{SO}(2)$ such that
\begin{align}
  \label{eq:xik}
  R(\omega_k-\omega_\zs) = \eps\sigma_k + \bigO(\eps^2)
  \quad\text{with}\quad
  \sigma_k = \begin{pmatrix} \cos\frac\pi3k \\ \sin \frac\pi3k \end{pmatrix}
\end{align}
for $k=0,1,\ldots,5$.

Now, let $G \colon [0,T]\times K\to{\R^d}$ be a given smooth solution
to the Lagrangian evolution equation~\eqref{eq:Geq},
and fix a time $t\in(0,T)$.
For all sufficiently small $\eps>0$, we define maps $G_\eps,G_\eps^*\in\ansatzeps$
by linear interpolation of the values of $G(t;\cdot)$ and $G(t-\tau;\cdot)$, respectively,
on $\triang_\eps$.
That is, $G_\eps(\omega_\ell)=G(t;\omega_\ell)$ and $G^*_\eps(\omega_\ell)=G(t-\tau;\omega_\ell)$,
at all nodes $\omega_\ell$ in $\triang_\eps$.
Theorem~\ref{thm:consist} below states that the pair $G_\eps,G_\eps^*$
is an approximate solution to the discrete Euler-Lagrange equations~\eqref{eq:dEL}
at all nodes $\omega_\zs$ of the respective triangulation $\triang_\eps$ that lie in $K'$.

The hexagonality hypothesis on the $\triang_\eps$ is strong,
but some very strong restriction of $\ansatzeps$'s geometry is apparently necessary.
See Remark~\ref{rmk:weird} following the proof for further discussion.
\begin{thm}
  \label{thm:consist}
  Under the hypotheses and with the notations introduced above,
  the Euler-Lagrange equation~\eqref{eq:dEL} admits the following asymptotic expansion:
  \begin{subequations}
  \begin{align}
    \label{eq:momentum}
    \momentum_\zs &= \frac{\sqrt3}2\eps^2\,\refrho(\omega_\zs)\partial_tG(t;\omega_\zs)+\bigO(\eps^3), \\
    \label{eq:impulse}
    \impulse_\zs &= \frac{\sqrt3}2\eps^2\,\refrho(\omega_\zs)\bigvelo[G](t;\omega_\zs)+\bigO(\eps^3),
  \end{align}
  \end{subequations}
  as $\eps\to0$,
  uniformly at the nodes $\omega_\zs\in K'$ of the respective $\triang_\eps$.
\end{thm}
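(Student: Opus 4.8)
The plan is to prove both asymptotic expansions by Taylor-expanding everything in sight around the fixed vertex $\omega_\zs$, exploiting the almost-hexagonal structure~\eqref{eq:xik} to make the leading-order symmetric sums collapse to the continuous differential operators appearing in $\bigvelo[G]$. Throughout I would write $G_k=G(t;\omega_k)$, $G_\zs=G(t;\omega_\zs)$, etc., and abbreviate $x_k:=R(\omega_k-\omega_\zs)=\eps\sigma_k+\bigO(\eps^2)$, so that the six $\sigma_k$ are the unit vectors at angles $\tfrac\pi3 k$. The key elementary facts about the regular hexagon are the vanishing of odd symmetric sums and the isotropy of the second-moment tensor, namely $\sum_{k=0}^5\sigma_k=0$, $\sum_{k=0}^5\sigma_k\otimes\sigma_k=3\,\mathrm{Id}$, and $\sum_{k=0}^5(\sigma_{k+1}-\sigma_k)=0$; these are what force the cancellations that leave a clean second-order operator.

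\textbf{The momentum term~\eqref{eq:momentum}.} This is the easier of the two. First I would observe that $\momentum_\zs$ is a discrete time-difference: since $G$ solves~\eqref{eq:Geq}, one has $G(t;\omega)-G(t-\tau;\omega)=\tau\,\partial_tG(t;\omega)+\bigO(\tau^2)$ at every fixed node, so each bracket $\tfrac{G_\bullet-G^*_\bullet}\tau$ in the definition of $\momentum_\zs$ equals $\partial_tG(t;\omega_\bullet)+\bigO(\tau)$. Because $\tau=\bigO(\eps)$, I would Taylor-expand $\partial_tG(t;\omega_k)=\partial_tG(t;\omega_\zs)+\bigO(\eps)$ about $\omega_\zs$; the weights $\mu_\triang^k$ are each $\tfrac{\sqrt3}4\eps^2\refrho(\omega_\zs)+\bigO(\eps^3)$ (the area of an equilateral triangle of side $\eps$ times the local density). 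Summing the six triangles, the coefficient $2+1+1=4$ inside each bracket combines with $\tfrac1{12}$ and the six identical leading weights to give exactly $\tfrac1{12}\cdot6\cdot\tfrac{\sqrt3}4\eps^2\refrho(\omega_\zs)\cdot4\,\partial_tG(t;\omega_\zs)=\tfrac{\sqrt3}2\eps^2\refrho(\omega_\zs)\partial_tG(t;\omega_\zs)$, with everything else collected into the $\bigO(\eps^3)$ remainder.

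\textbf{The impulse term~\eqref{eq:impulse}.} This carries the real difficulty, because $\impulse_\zs$ must reproduce the full second-order operator in $\bigvelo[G]$. Here I would expand $G_k-G_\zs=\df G\,x_k+\tfrac12\df^2G[x_k,x_k]+\bigO(\eps^3)$, where $\df G$, $\df^2G$ are evaluated at $\omega_\zs$, and similarly for $G_{k+1}-G_k$ and the argument-differences $G_{k+1}-G_\zs$. Inserting these into the $2\times2$ determinant $\det(G_k-G_\zs\mid G_{k+1}-G_\zs)$ gives $\det\df G$ times $\det(x_k\mid x_{k+1})$ at leading order, and the latter is the area $\tfrac{\sqrt3}2\eps^2$ of the hexagon's triangle up to $\bigO(\eps^3)$; this is what recovers $P(\refrho/\det\df G)$ after expanding $P$. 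The subtle part is the $\jm(G_{k+1}-G_k)$ factor multiplied against this pressure: the leading term $P(\refrho/\det\df G)\jm\df G(x_{k+1}-x_k)$ sums to zero by $\sum(\sigma_{k+1}-\sigma_k)=0$, so the surviving contribution comes from the interaction between the first-order variation of $P$ (through the second-moment tensor $\sum\sigma_k\otimes\sigma_k=3\,\mathrm{Id}$) and the curvature terms $\df^2G$, which is precisely what assembles $P'(\cdots)(\df G)^{-T}\big(\tr_{12}[(\df G)^{-1}\df^2G]^T-\nabla\refrho/\refrho\big)$. The potential piece is comparatively routine: $\aint_\stdtri\nabla V(\cdots)(1-\xi_1-\xi_2)\dd\xi\to\tfrac13\nabla V(G_\zs)$ on each triangle, and $\tfrac13\cdot6\cdot\tfrac{\sqrt3}4\eps^2=\tfrac{\sqrt3}2\eps^2$ delivers the $-\nabla V\circ G$ term with the correct prefactor.

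The main obstacle is unquestionably the determinant expansion and the bookkeeping of the second-order terms in the pressure sum: one must track $\df^2G$ through the nonlinear $P$, use the anisotropy-killing identity $\sum_k\sigma_k\otimes\sigma_k=3\,\mathrm{Id}$ to convert the hexagonal average of $\df^2G[\sigma_k,\sigma_k]$ into a genuine trace, and verify that the $\nabla\refrho/\refrho$ contribution arises correctly from expanding the masses $\mu_\triang^k$ as well as the reference density across the hexagon. It is exactly at this point that the hexagonality hypothesis~\eqref{eq:xik} is indispensable: a generic mesh would produce an anisotropic second-moment tensor and the discrete operator would fail to converge to the isotropic Laplacian-type operator inside $\bigvelo[G]$. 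I would organize the calculation by separating $\impulse_\zs=\impulse_\zs^{\mathrm{int}}+\impulse_\zs^{\mathrm{pot}}$, handling the potential part first as a warm-up, and then grinding the internal part order by order in $\eps$, checking at the end that the $\bigO(\eps)$ error in the rotation $R$ and in the $\bigO(\eps^2)$ corrections to $x_k$ only affect the $\bigO(\eps^3)$ remainder.
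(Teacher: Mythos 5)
Your overall strategy coincides with the paper's: fix a node, reduce to the regular hexagon via the rotation $R$, Taylor-expand $G$ in time for the momentum term and to second order in space for the impulse term, and let the hexagonal symmetry kill the leading-order sums. The momentum computation and the potential contribution are handled exactly as in the paper and are fine.

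The gap is in the one step you yourself flag as the main obstacle. After the leading pressure term cancels by telescoping, the surviving $\bigO(\eps^2)$ contribution of the internal energy is a sum over $k$ of terms of the form $\{\chi_k-\tfrac12\vartheta_k\}\,\jm(\sigma_{k+1}-\sigma_k)$, where $\chi_k$ (carrying $\nabla\refrho/\refrho$) is linear in the pair $(\sigma_k,\sigma_{k+1})$, but $\vartheta_k$ --- the term carrying $\df^2G$ --- is the trace of a matrix quadratic in $(\sigma_k,\sigma_{k+1})$ contracted against $(\sigma_k|\sigma_{k+1})^{-1}$. The full summand for the curvature part is therefore \emph{cubic} in consecutive stencil directions, and the identity that reduces its sum to $\tr_{12}\big[(\df G)^{-1}\df^2G\big]^T$ is the paper's Lemma~\ref{lem:traces}, a third-moment-type identity proved by a careful case analysis of trigonometric sums. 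The second-moment identity $\sum_k\sigma_k\otimes\sigma_k=3\,\idm$ that you cite as the ``anisotropy-killing'' mechanism suffices only for the $\nabla\refrho/\refrho$ contribution (it is essentially Lemma~\ref{lem:circle} in disguise); it cannot evaluate the cubic sum. That this is not a mere bookkeeping formality is shown by Remark~\ref{rmk:weird} together with Lemma~\ref{lem:algebra2}: for a different six-point stencil the analogous cubic sum fails to reduce to the trace and consistency genuinely breaks down. To complete your argument you would need to state and prove that cubic identity explicitly; as written, the proposal asserts the decisive cancellation while citing a lemma that does not deliver it.
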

\begin{rmk}
  Up to an error $\bigO(\eps^3)$,
  the geometric pre-factor $\frac{\sqrt3}2\eps^2$ equals to
  one third of the total area of the hexagon with vertices $\omega_0$ to $\omega_5$,
  and is thus equal to the integral of the piecewise affine hat function with peak at $\omega_\zs$.
\end{rmk}
\begin{proof}[Proof of Theorem~\ref{thm:consist}]
  Throughout the proof, let $\eps>0$ be fixed;
  we shall omit the $\eps$-index for $\triang_\eps$ and $\tau_\eps$.
  First, we fix a node $\omega_\zs$ of $\triang\cap K'$.
  Thanks to the equivariance of both~\eqref{eq:Geq} and~\eqref{eq:dEL} under rigid motions of the domain,
  we may assume that $R$ in~\eqref{eq:xik} is the identity, and that $\omega_\zs=0$.

  We collect some relations that are helpful for the calculations that follow.
  Trivially,
  \begin{align}
    \label{eq:sigma0}
    \sum_{k=0}^5\sigma_k=0, \quad \sum_{k=0}^5\omega_k=\bigO(\eps^2).
  \end{align}
  Moreover, we have that
  \begin{align}
    \label{eq:sigma1}
    |\Delta_k| = \det(\omega_k|\omega_{k+1})
    = \eps^2\det(\sigma_k|\sigma_{k+1}) + \bigO(\eps^3)
    = \frac{\sqrt3}4\eps^2+\bigO(\eps^3).
  \end{align}
  On the other hand, by definition of $\mu_\triang^k$ in~\eqref{eq:mu},
  it follows that
  \begin{align}
    \label{eq:sigma2}
    \begin{split}
      \mu_\triang^k = |\Delta_k|\aint_{\Delta_k}\refrho\dd\omega
      &= \frac12\det(\omega_{k}|\omega_{k+1})
      \,\left[\refrho\left(\frac{\omega_k+\omega_{k+1}}3\right)+\bigO(\eps)\right] \\
      &= \frac12\det(\omega_{k}|\omega_{k+1})
      \left[\refrho_\zs+\eps\Grad\refrho_\zs\cdot \frac{\sigma_k+\sigma_{k+1}}3+\bigO(\eps^2)\right].
    \end{split}
  \end{align}
  Combining~\eqref{eq:sigma1} and~\eqref{eq:sigma2} yields
  \begin{align}
    \label{eq:sigma3}
    \mu_\triang^k = \eps^2\left(\frac{\sqrt{3}}4\refrho_\zs+\bigO(\eps)\right).
  \end{align}
  In accordance with the definition of $G_\eps$ and $G_\eps^*$ from $G$ detailed above,
  let $G_\zs:=G(t,\omega_\zs)$ and $G^*_\zs=G(t-\tau,\omega_\zs)$,
  and define $G_k$, $G_k^*$ for $k=0,\ldots,5$ in the analogous way.
  Further, we introduce $\df G_\zs=\df G(t,\omega_\zs)$, $\df^2G_\zs=\df^2G(t,\omega_\zs)$, $\partial_tG_\zs=\partial_tG(t,\omega_\zs)$.

  To perform an expansion in the \emph{momentum term}, first observe that
  \begin{align*}
    G(t-\tau;\omega_k) = G(t;\omega_k) - \tau\partial_t G(t;\omega_k) + \bigO(\tau^2),
  \end{align*}
  for each $k=0,1,\ldots,5$,
  and so, using that $\tau=\bigO(\eps)$ by hypothesis,
  \begin{align*}
    \frac{G_k-G_k^*}\tau = \partial_tG(t;\omega_k) + \bigO(\tau)
    = \partial_tG_\zs + \bigO(\eps) + \bigO(\tau) = \partial_tG_\zs + \bigO(\eps).
  \end{align*}
  Using~\eqref{eq:sigma3} and then~\eqref{eq:sigma0} yields
  \begin{align*}
    \momentum_\zs
    &=\frac1{12\tau}\sum_{k=0}^5 \eps^2\left(\frac{\sqrt{3}}4\refrho_\zs+\bigO(\eps)\right)\big[4\partial_tG_\zs+\bigO(\eps)\big] \\
    &=\frac{\sqrt3}2\eps^2\,\refrho_\zs\partial_tG_\zs + \bigO(\eps^3).
  \end{align*}
  This is~\eqref{eq:momentum}.

  For the \emph{impulse term}, we start with a Taylor expansion to second order in space:
  \begin{align*}
    G_k = G_\zs + \df G_\zs\omega_k + \frac12 \df ^2G_\zs:[\omega_k]^2 + \bigO(\eps^3).
  \end{align*}
  We combine this with the observation that $(\omega_k|\omega_{k+1})^{-1}=\bigO(\eps^{-1})$
  to obtain:
  \begin{align*}
    &\frac{\mu_\triang^k}{\det(G_k-G_\zs|G_{k+1}-G_\zs)} \\
    &= \frac{\det(\omega_k|\omega_{k+1})}{\det \df G_\zs}
    \frac{\refrho_\zs+\eps\Grad\refrho_\zs\cdot \frac{\sigma_k+\sigma_{k+1}}3+\bigO(\eps^2)}
    {\det\big[(\omega_k|\omega_{k+1})+\frac12(\df G_\zs)^{-1}\big(\df ^2G_\zs:[\omega_k]^2\big|\df ^2G_\zs:[\omega_{k+1}]^2\big)+\bigO(\eps^3)\big]} \\
    &= \frac{\refrho_\zs}{\det\df G_\zs}
    \frac{1+\displaystyle{\eps\frac{\Grad\refrho_\zs}{\refrho_\zs}\cdot \frac{\sigma_k+\sigma_{k+1}}3}+\bigO(\eps^2)}
    {\det\big[\idm+\frac12(\df G_\zs)^{-1}\big(\df ^2G_\zs:[\omega_k]^2\big|\df ^2G_\zs:[\omega_{k-1}]^2\big)
      \,(\omega_k|\omega_{k+1})^{-1}+\bigO(\eps^2) \big]} \\
    &=\frac{\refrho_\zs}{\det\df G_\zs}\left(1+\eps\left\{\chi_k-\frac12\vartheta_k\right\}+\bigO(\eps^2)\right),
  \end{align*}
  where
  \begin{align*}
    \chi_k &= \frac{\Grad\refrho_\zs}{\refrho_\zs}\cdot \frac{\sigma_k+\sigma_{k+1}}3, \\
    \vartheta_k &= \tr\left[\big((\df G_\zs)^{-1}\df ^2G_\zs:[\sigma_k]^2\big|(\df G_\zs)^{-1}\df ^2G_\zs:[\sigma_{k+1}]^2\big)\,(\sigma_k|\sigma_{k+1})^{-1}\right].
  \end{align*}
  Plugging this in leads to
  \begin{align*}
    &\sum_{k=0}^5\left\{\frac12 P\left(\frac{\refrho_\zs}{\det \df G_\zs}\right)
      + \frac\eps2P'\left(\frac{\refrho_\zs}{\det \df G_\zs}\right)\left\{\chi_k-\frac12\vartheta_k\right\}
      + \bigO(\eps^2) \right\}
    \jm \df G_\zs(\omega_{k+1}-\omega_{k}) \\
    &=\frac12 P\left(\frac{\refrho_0}{\det \df G_\zs}\right) \jm \df G_\zs\left(\sum_{k=0}^5 (\omega_{k+1}-\omega_{k})\right) \\
    &\qquad + \frac{\eps^2}4P'\left(\frac{\refrho_\zs}{\det \df G_\zs}\right)
    \jm \df G_\zs\jm^T\left(\sum_{k=0}^5 \left\{2\chi_k-\vartheta_k\right\} \jm(\sigma_{k+1}-\sigma_{k})\right) + \bigO(\eps^3) \\
    & = 0 + \frac{\sqrt3}2\eps^2P'\left(\frac{\refrho_\zs}{\det \df G_\zs}\right)\,
    (\df G_\zs)^{-T}\left\{\tr_{12}\big[(\df G_\zs)^{-1}\df ^2G_\zs\big]^T-\frac{\Grad\rho_\zs}{\rho_\zs}\right\}
    + \bigO(\eps^3),
  \end{align*}
  where we have use the auxiliary algebraic results
  from Lemma~\ref{lem:JAJ}, Lemma~\ref{lem:circle}, and Lemma~\ref{lem:traces}.

  For the remaining part of the impulse term, a very rough approximation is sufficient:
  \begin{align*}
    \Grad V(g) = \Grad V(G_\zs) + \bigO(\eps)
  \end{align*}
  holds for any $g$ that is a convex combination of $G_\zs,G_0,\ldots,G_5$,
  where the implicit constant is controlled in terms of the supremum of $\df^2V$ and $\df G$ on $K'$.
  With that, we simply have, using again~\eqref{eq:sigma3}:
  \begin{align*}
    &\sum_{k=0}^5\mu_\triang^k\aint_{\stdtri}\Grad V\big((1-\xi_1-\xi_2)G_\zs + \xi_1 G_{k}+ \xi_2 G_{k+1}\big)\,(1-\xi_1-\xi_2)\dd\xi \\
    & = 6\eps^2\left(\frac{\sqrt{3}}4\refrho_\zs+\bigO(\eps)\right)\,\big(\Grad V(G_\zs) + \bigO(\eps)\big)
    = \frac{\sqrt3}2\eps^2\,\refrho_\zs\Grad V(G_\zs)+\bigO(\eps^3).
  \end{align*}
  Together, this yields~\eqref{eq:impulse}.
\end{proof}
\begin{rmk}
  \label{rmk:weird}
  The hypotheses of Theorem~\eqref{thm:consist} require that the $\triang_\eps$ are almost hexagonal on $K'$.
  This seems like a technical hypothesis that simplifies calculations,
  but apparently, \emph{some} strong symmetry property of the $\triang_\eps$ is necessary for the validity of the result.

  To illustrate the failure of consistency --- at least in the specific form considered here ---
  assume that $V\equiv0$ and $\refrho\equiv1$,
  and consider a sequence of triangulations $\triang_\eps$ for which there is a node $\omega_\zs$
  such that~\eqref{eq:xik} holds with the $\sigma_k$ being replaced
  by a different six-tuple of vectors $\sigma'_k$. % still with $\sum_{k=0}^5\sigma_k'=0$.
  Repeating the steps of the proof above, it is easily seen that
  $\momentum_\zs=a\eps^2\,\pd_tG(t;\omega_\zs)+\bigO(\eps^3)$,
  with an $\eps$-independent constant $a>0$ in place of $\sqrt3/2$,
  and that
  \[ \impulse_\zs=-\frac{\eps^2}4 P'\left(\frac1{\det\df G_\zs}\right)\,(\df G_\zs)^{-T}
  \sum_{k=0}^5\vartheta'_k\jm(\sigma_{k+1}'-\sigma_k')+\bigO(\eps^3), \]
  with
  \begin{align*}
    \vartheta_k' = \tr\left[\big((\df G_\zs)^{-1}\df ^2G_\zs:[\sigma_k']^2\big|(\df G_\zs)^{-1}\df ^2G_\zs:[\sigma_{k+1}']^2\big)\,(\sigma_k'|\sigma_{k+1}')^{-1}\right].
  \end{align*}
  If a result of the form~\eqref{eq:impulse} --- with $\sqrt3/2$ replaced by $a$ ---
  was true, then this implies in particular that
  \begin{align}
    \label{eq:howinhellcanthatfail}
    \sum_{k=0}^5\vartheta'_k\jm(\sigma_{k+1}'-\sigma_k') = a'\tr_{12}\big[(\df G_\zs)^{-1}\df^2G_\zs\big]
  \end{align}
  holds with some constant $a'>0$ for
  arbitrary matrices $\df G_\zs\in\R^{2\times2}$ of positive determinant
  and tensors $\df^2 G_\zs\in\R^{2\times2\times2}$ that are symmetric in the second and third component.
  A specific example for which~\eqref{eq:howinhellcanthatfail} is not true is given by
  \begin{align}
    \label{eq:newsigma}
    \sigma_0' = {1\choose0} = -\sigma_3',\quad
    \sigma_1' = {\frac12\choose\frac12} = -\sigma_4',\quad
    \sigma_2' = {0\choose1} = -\sigma_5',
  \end{align}
  in combination with $\df G_\zs=\idm$,
  and a $\df^2G_\zs$ that is zero except for two ones, at the positions $(1,2,2)$ and $(2,1,1)$.
  In Lemma~\ref{lem:algebra2}, we show that the left-hand side in~\eqref{eq:howinhellcanthatfail} equals to $1\choose1$;
  on the other hand, the right-hand side is clearly zero.

  Note that this counter-example is significant,
  insofar as the skew (in fact, degenerate) hexagon described by the $\sigma_k'$ in~\eqref{eq:newsigma}
  corresponds to a popular method for triangulation of the plane.
\end{rmk}

\section{Numerical simulations in $d=2$}
\label{sec:NumSim2D}

\subsection{Implementation}
The Euler-Lagrange equations for the $d=2$-dimensional case have been derived in~\eqref{eq:dEL}.
We perfom a small modification in the potential term in order to simplify calculations
with presumably minimal loss in accuracy:
\begin{align*}
  \mathbf{Z}_\zs[G;G^*]
  &=
    \sum_{k=0}^5\frac{\mu_\triang^k}{12}
    \left[2\left(\frac{G_\zs-G^*_\zs}\tau\right)+\left(\frac{G_k-G^*_k}\tau\right)+\left(\frac{G_{k+1}-G^*_{k+1}}\tau\right)\right]  \\
  & +
    \sum_{k=0}^5 \bigg[
    \frac12\widetilde h'\left(\frac{\det(G_k-G_\zs|G_{k+1}-G_\zs)}{2\mu_\triang^k}\right)\jm(G_{k+1}-G_k)
    + \frac{\mu_\triang^{k}}6\Grad V(G_{k+\frac12})\bigg],
\end{align*}
with the short-hand notation
\begin{align*}
  G_{k+\frac12} = \frac13(G_\zs+G_k+G_{k+1}).
\end{align*}
On the main diagonal, the Hessian amounts to
\begin{align*}
  \mathbf{H}_{\zs\zs}[G]
  &=
    \left(\sum_{k=0}^5\frac{\mu_\triang^k}{6\tau}\right)\mathds{1}_2 \\
  &+
    \sum_{k=0}^5\frac1{4\mu_\triang^k}\widetilde h''\left(\frac{\det(G_k-G_\zs|G_{k+1}-G_\zs)}{2\mu_\triang^k}\right)
    \big[\jm(G_{k+1}-G_k)\big]\big[\jm(G_{k+1}-G_k)\big]^\trap \\
  &+
    \sum_{k=0}^5 \frac{\mu_\triang^{k}}{18}\Grad^2 V(G_{k+\frac12})
\end{align*}
Off the main diagonal, the entries of the Hessian are given by
\begin{align*}
  \mathbf{H}_{\zs k}[G]
  &=
    \frac{\mu_\triang^k+\mu_\triang^{k-1}}{12\tau}\mathds{1}_2 \\
  &+
    \frac1{4\mu_\triang^k}\widetilde h''\left(\frac{\det(G_k-G_\zs|G_{k+1}-G_\zs)}{2\mu_\triang^k}\right)
    \big[\jm(G_{k+1}-G_k)\big]\big[\jm(G_{k+1}-G_\zs)\big]^\trap \\
  &-
    \frac1{4\mu_\triang^{k-1}}\widetilde h''\left(\frac{\det(G_{k-1}-G_\zs|G_{k}-G_\zs)}{2\mu_\triang^{k-1}}\right)
    \big[\jm(G_{k}-G_{k-1})\big]\big[\jm(G_{k-1}-G_\zs)\big]^\trap \\
  &+
    \frac{\mu_\triang^{k}}{18}\Grad^2 V(G_{k+\frac12})
    + \frac{\mu_\triang^{k-1}}{18}\Grad^2 V(G_{k-\frac12}).
\end{align*}

The scheme consists of an inner (Newton)
and an outer (time stepping) iteration. We start from a given initial
density $\rho_0$ and define the solution at the next time step
inductively by applying Newton's method in the inner iteration.
To this end we initialise $G^{(0)}:=G^n$ with $G^n$, the solution at the $n$th time step,
and define inductively
\begin{align*}
  G^{(s+1)} := G^{(s)} + \delta G^{(s+1)} ,
\end{align*}
where the update $\delta G^{(s+1)}$ is the solution to the linear system
\begin{align*}
   \mathbf{H}[G^{(s)}] \delta G^{(s+1)}
  = -\mathbf{Z}[G^{(s)};G^n] .
\end{align*}
The effort of each inner iteration step is essentially determined
by the effort to invert the sparse matrix $\mathbf{H}[G^{(s)}]$.
As soon as the norm of $\delta G^{(s+1)}$ drops below a given stopping threshold,
define $G^{n+1}:=G^{(s+1)}$ as approximate solution in the $n+1$st time step.

In all experiments the stopping criterion in the Newton iteration is set to $10^{-9}$.

\subsection{Numerical experiments}
In this section we present results of our numerical experiments for~\eqref{eq:NFPsystem}
with a cubic porous-medium nonlinearity $P(r)=r^3$
and different choices for the external potential $V$,
\begin{align}
  \label{eq:PME3}
  \partial_t\rho = \Delta(u^3) + \Div(u\Grad V).
\end{align}

\subsubsection*{Numerical experiment 1: unconfined evolution of Barenblatt profile}
As a first example, we consider the ``free'' cubic porous medium equation,
that is~\eqref{eq:PME3} with $V\equiv0$.
It is well-known (see, e.g., Vazquez~\cite{book:Vazquez}) that in the long-time limit $t\to\infty$,
arbitrary solutions approach a self-similar one,
\begin{align}
  \label{eq:selfsim}
  \rho^*(t,x) = t^{-d\alpha}\BB_3\big(t^{-\alpha}x\big)
  \quad\text{with}\quad
  \alpha=\frac16, % \frac1{d+2(m-1)},
\end{align}
where $\BB_3$ is the associated Barenblatt profile
\begin{equation}
  \label{eq:BBprofile}
  \BB_3(z) = \left(C_3-\frac13\|z\|^2\right)_+^{\frac12},
  % \bigl(C-\frac{(m-1)\alpha}{4m}\|z\|^2\bigr)_+^{\frac1{m-1}}
\end{equation}
where $C_3=(2\pi)^{-\frac23}\approx 0.29$ is chosen to normalize $\BB_3$'s mass to unity.

In this experiment, we are only interested in the quality of the numerical approximation
for the self-similar solution~\eqref{eq:selfsim}.
To reduce numerical effort, we impose a four-fold symmetry of the  approximation:
we use the quarter circle as computational domain $K$,
and interprete the discrete function thereon as one of four symmetric pieces of the full discrete solution.
To preserve reflection symmetry over time,
homogeneous Neumann conditions are imposed on the artificial boundaries.
This is implemented by reducing the degrees of freedom of the nodes along the $x$- and $y$-axes
to tangential motion.
\begin{figure}[b]
  \includegraphics[width=0.45\textwidth]{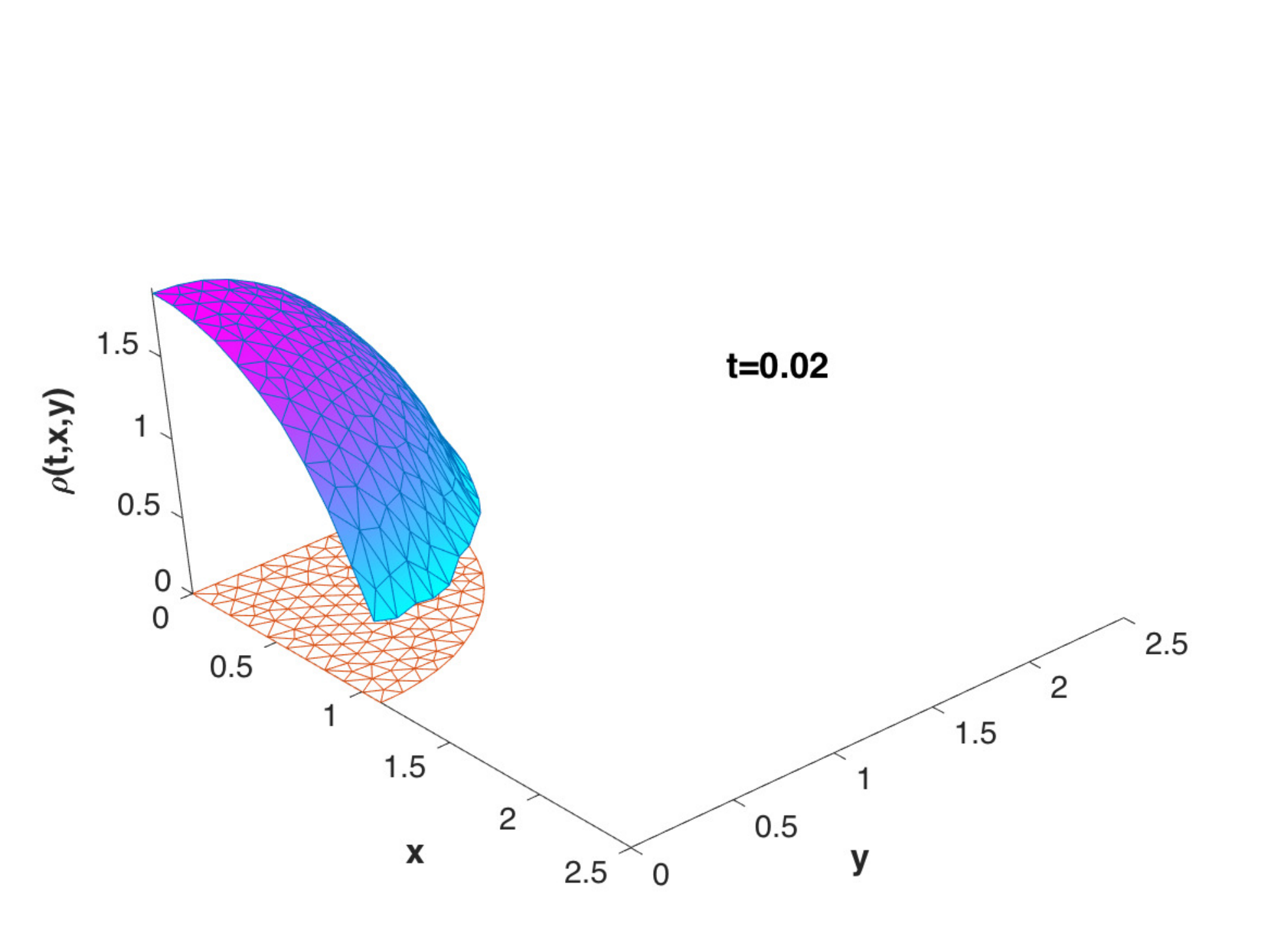}%
  \includegraphics[width=0.45\textwidth]{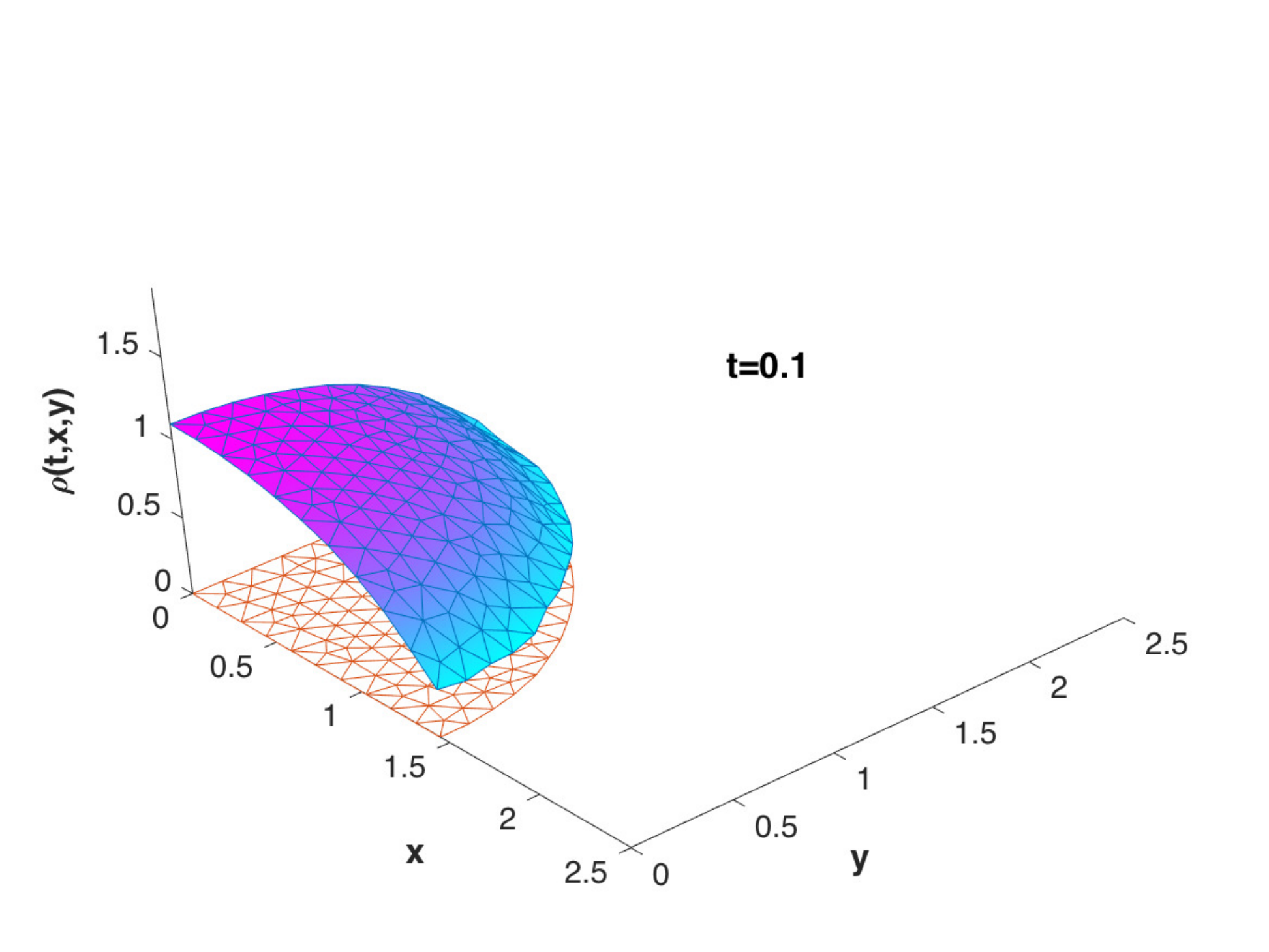}\\
  \includegraphics[width=0.45\textwidth]{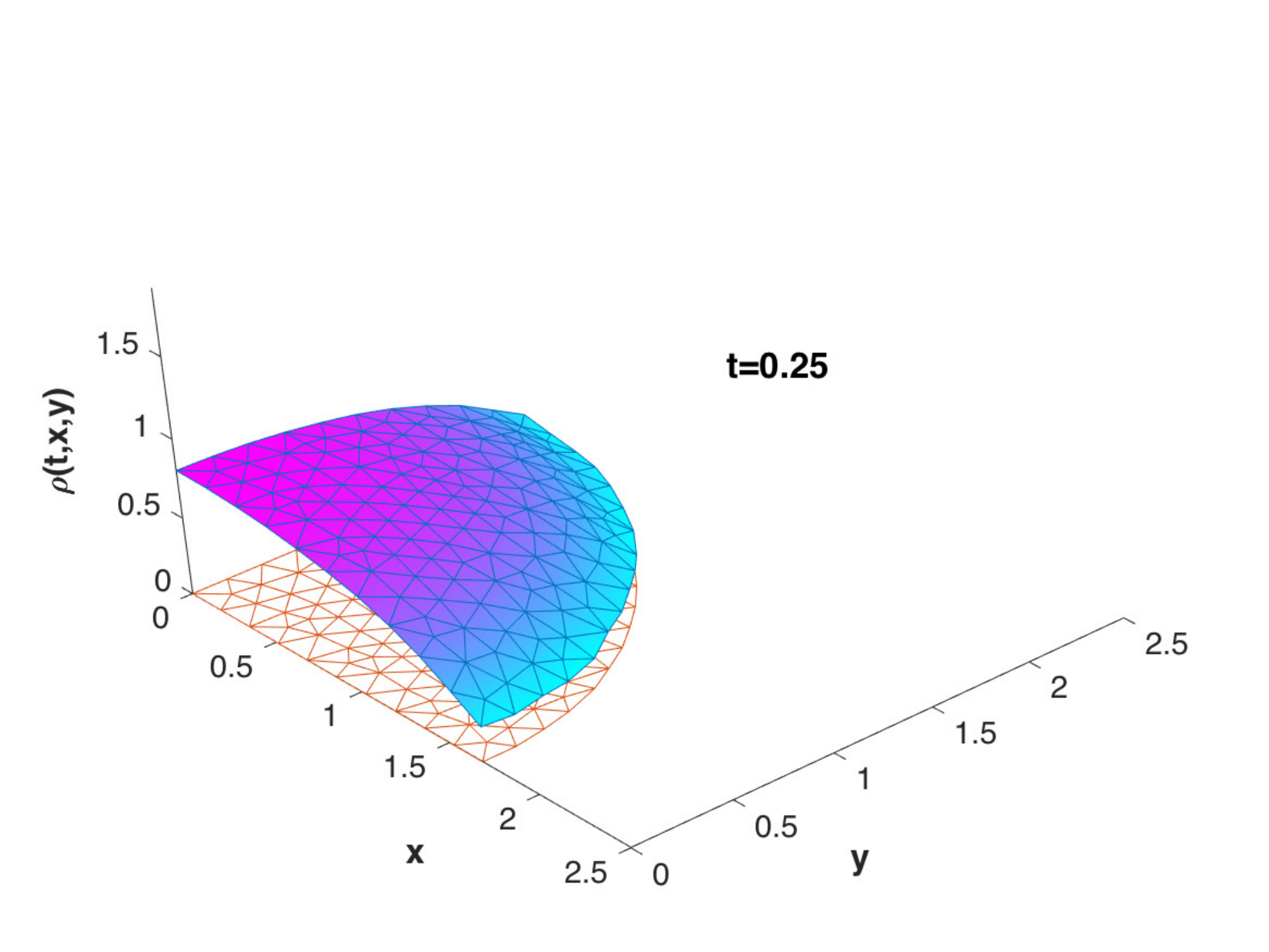}%
  \includegraphics[width=0.45\textwidth]{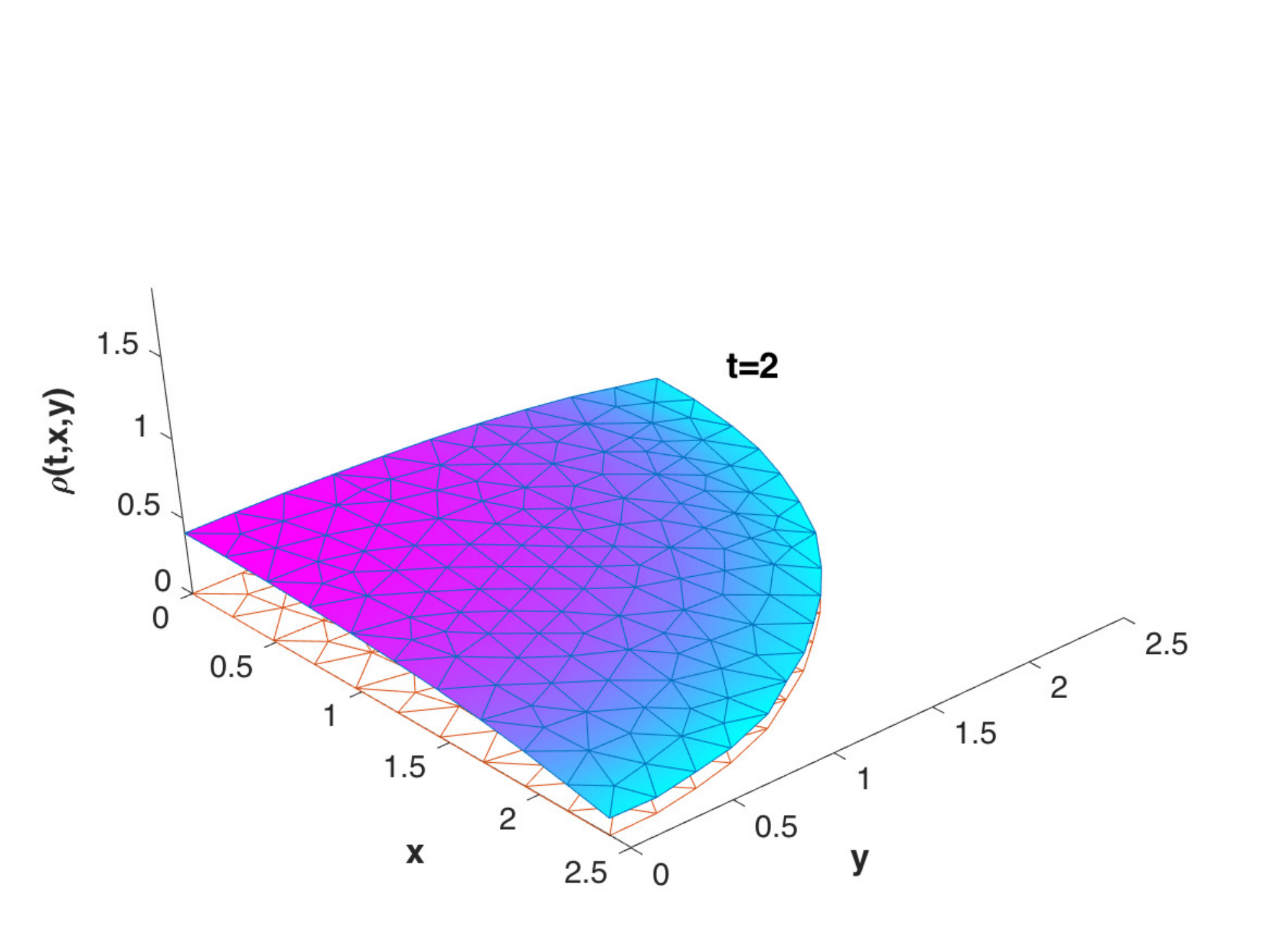}
  \caption{Numerical experiment 1: fully discrete evolution of our approximation for the self-similar solution
    to the free porous medium equation.
    Snapshots are taken at times $t=0.02$, $t=0.1$, $t=0.25$, and $t=2.0$.
  }
\label{fig:PMEevol}
\end{figure}
We initialize our simulation with a piecewise constant approximation
of the profile of $\rho^*$ from~\eqref{eq:BBprofile} at time $t=0.01$.
We choose a time step $\tau=0.001$ and the final time $T=2$.
In Figure~\ref{fig:PMEevol}, we have collected snapshots of the approximated density at different instances of time.
The Barenblatt profile of the solution is very well pertained over time.
\begin{rmk}
  It takes less than 2 minutes to complete this simulation on standard laptop
  ({\sc Matlab} code on a mid-2013 MacBook Air 11'' with 1.7 GHz Intel Core i7 processor).
\end{rmk}
\begin{figure}
  \includegraphics[width=0.65\textwidth]{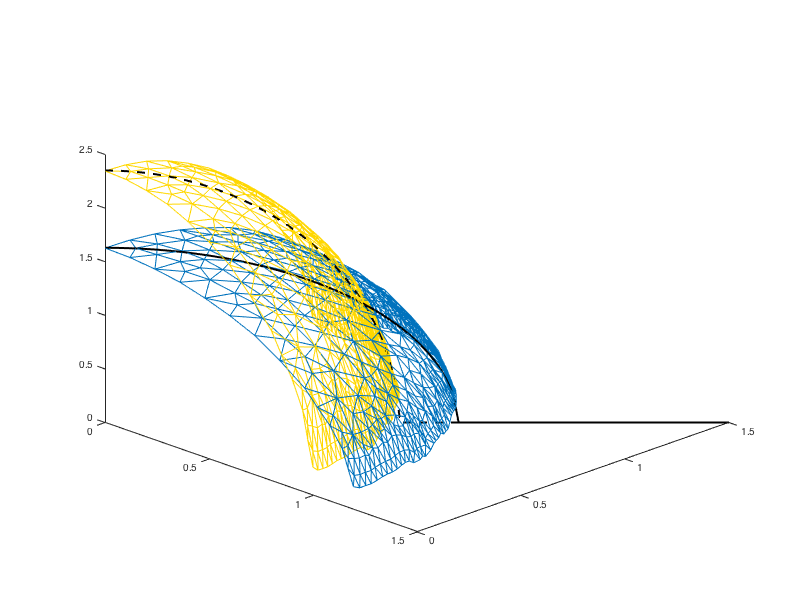}%
  \caption{Numerical experiment 1: comparison of the discrete solution (interpolated surface
    plots with triangulation) with the Barenblatt
    profile (solid and dashed black lines along the identity) at different times.}
  \label{fig:PMEenergy}
\end{figure}%
\begin{figure}
  \includegraphics[width=0.45\textwidth]{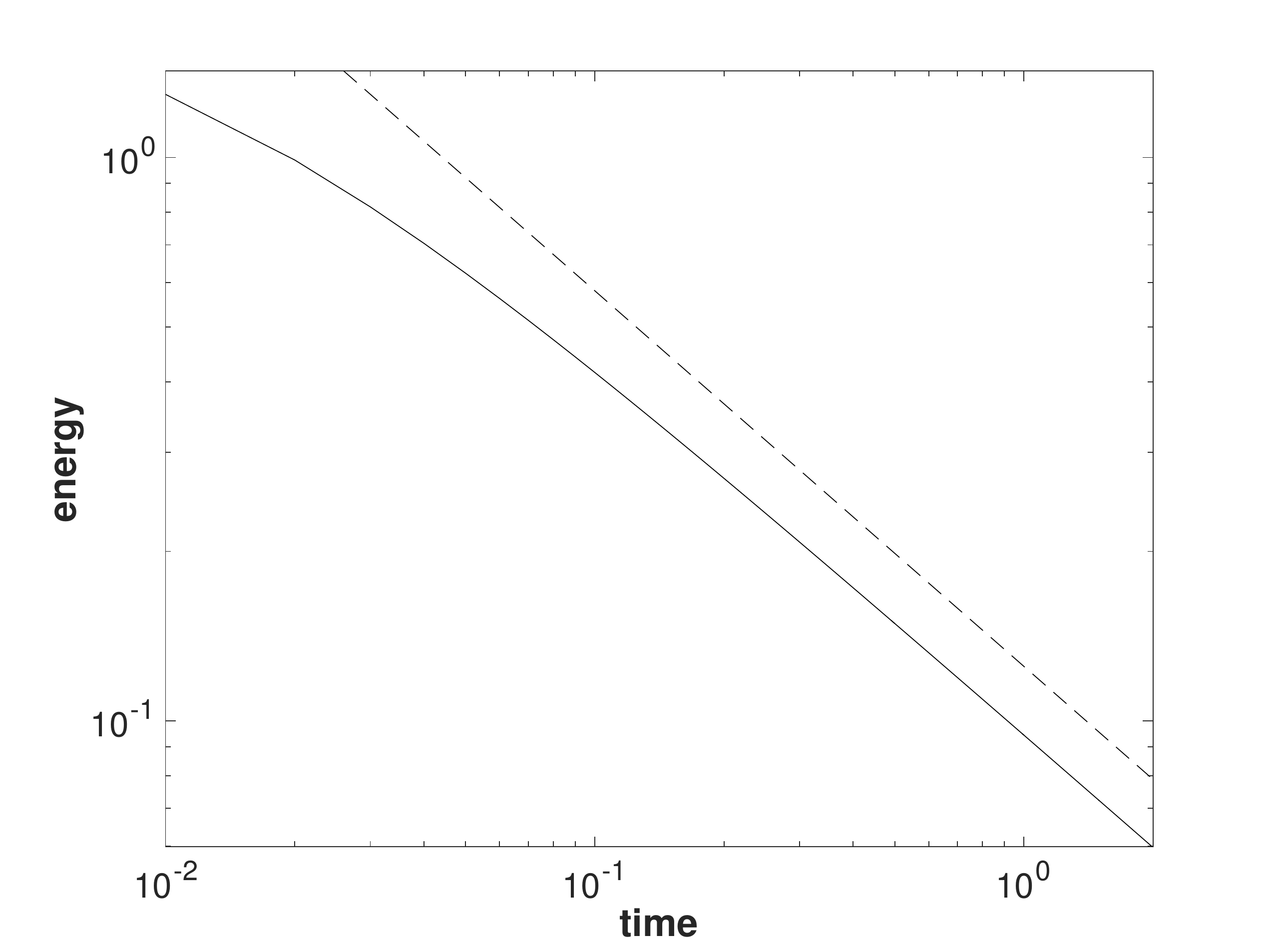}
  \includegraphics[width=0.45\textwidth]{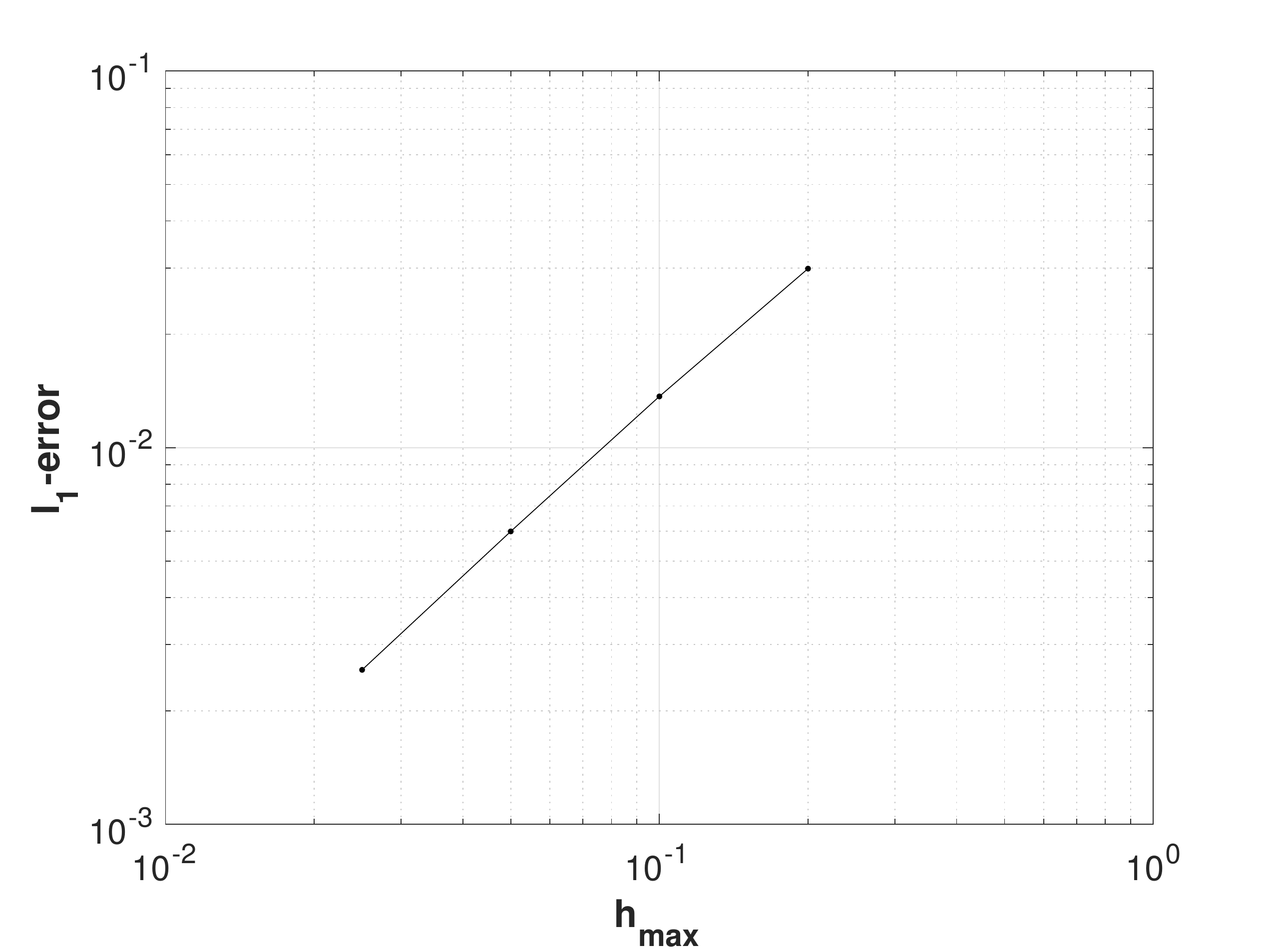}
  \caption{Numerical experiment 1: decay of the energy of the discrete solution in comparison with the
    analytical decay $t^{-2/3}$ of the Barenblatt solution
    (left). Numerical convergence for fixed ratio $\tau/h_{\rm
      max}^2=0.4$ (right).}
  \label{fig:PMEerror}
\end{figure}%
Figure~\ref{fig:PMEenergy} shows surface
    plots of the discrete solution at different times in comparison with the Barenblatt
    profile at the respective time.
By construction of the scheme, the initial mass is exactly conserved in time as the
discrete solution propagates.
The left plot in Figure~\ref{fig:PMEerror} shows the decay in the energy and gives quantitative information
about the difference of the discrete solution to the analytical Barenblatt solution.
The numerical solution shows good agreement with the analytical energy
decay rate $c=2/3$.

We also compute the $l_1$-error of the discrete solution to the
exact Barenblatt profile and observe that it remains within the order of the fineness of
the triangulation. The mass of the discrete solution is perfectly
conserved, as guaranteed by the construction of our method.

To estimate the convergence order of our method, we run several
experiments with the above initial data on different meshes. We fix
the ratio $\tau/h_{\rm max}^2=0.4$ and compute the $l_1$-error at time
$T=0.2$ on triangulations with $h_{\rm max}=0.2,\,0.1,\,0.05,\,0.025.$ We
expect the error to decay as a power of $h_{\rm max}$. The double
logarithmic plot should reveal a line with its slope indicating the
numerical convergence order. The right plot in Figure~\ref{fig:PMEerror} shows the
result, the estimated numerical convergence order which is obtained
from a least-squares fitted line through the points is equal to
$1.18$. This indicates first order convergence of the scheme with
respect to the spatial discretisation parameter $h_{\rm
      max}$.

\subsubsection*{Numerical experiment 2: Asymptotic self-similarity}
In our second example, we are still concerned with the free cubic porous medium equation,
\eqref{eq:PME3} with $V\equiv0$.
This time, we wish to give an indication that
the discrete approximation of the self-similar solution from~\eqref{eq:selfsim} from the previous experiment
might inherit the global attractivity of its continuous counterpart.
More specifically, we track the discrete evolution for the initial datum
\begin{align}
  \label{eq:exp2ic}
  \rho_0(x,y)= 3000(x^2+y^2)\exp[-5(|x|+|y|)]+0.1
\end{align}
until time $T=0.1$ and observe that it appears to approach the self-similar solution from above.
Snapshots of the simulation are collected in Figure~\ref{fig:PME4evol}.
\begin{figure}
  \includegraphics[width=0.45\textwidth]{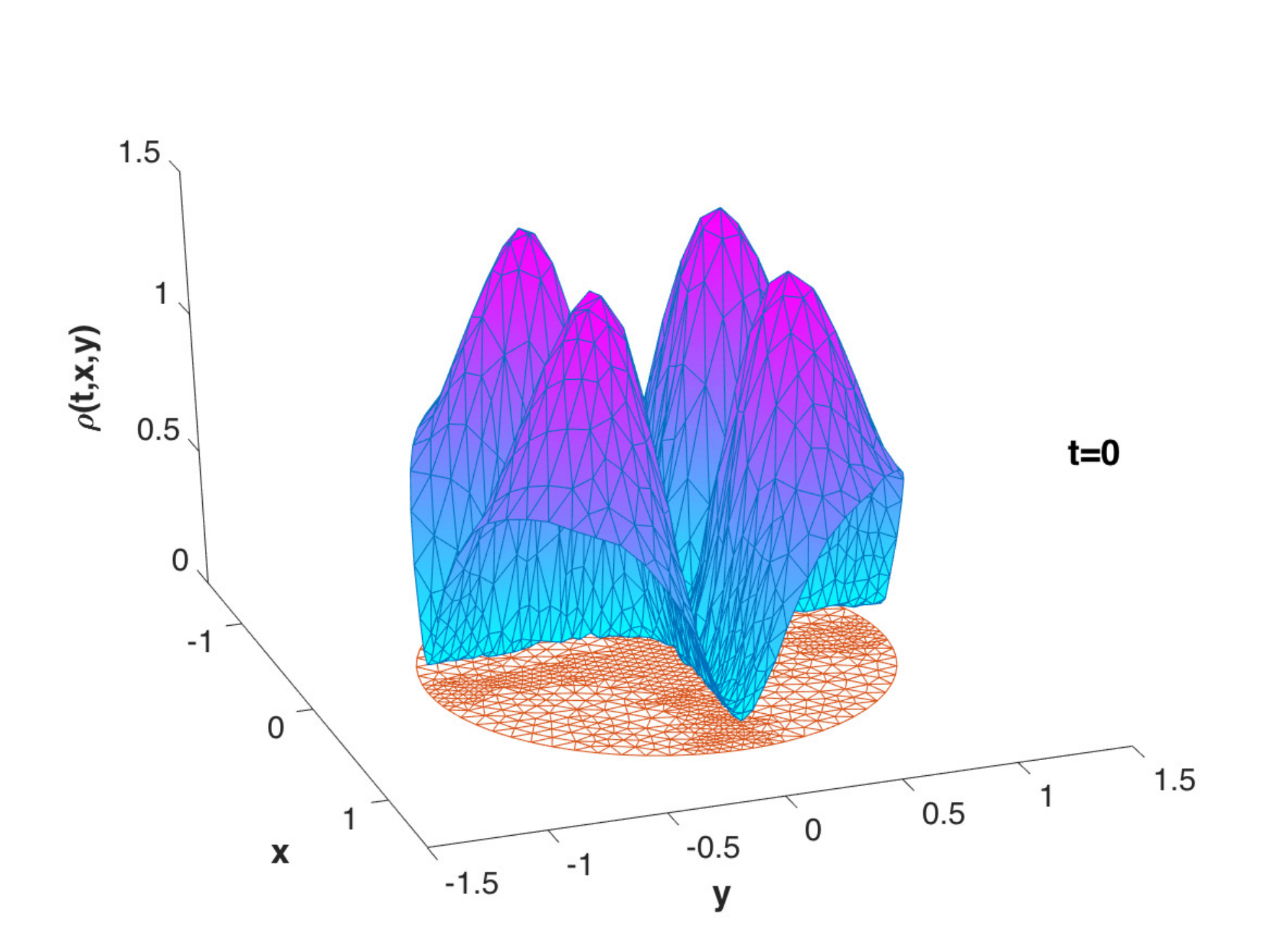}%
  \includegraphics[width=0.45\textwidth]{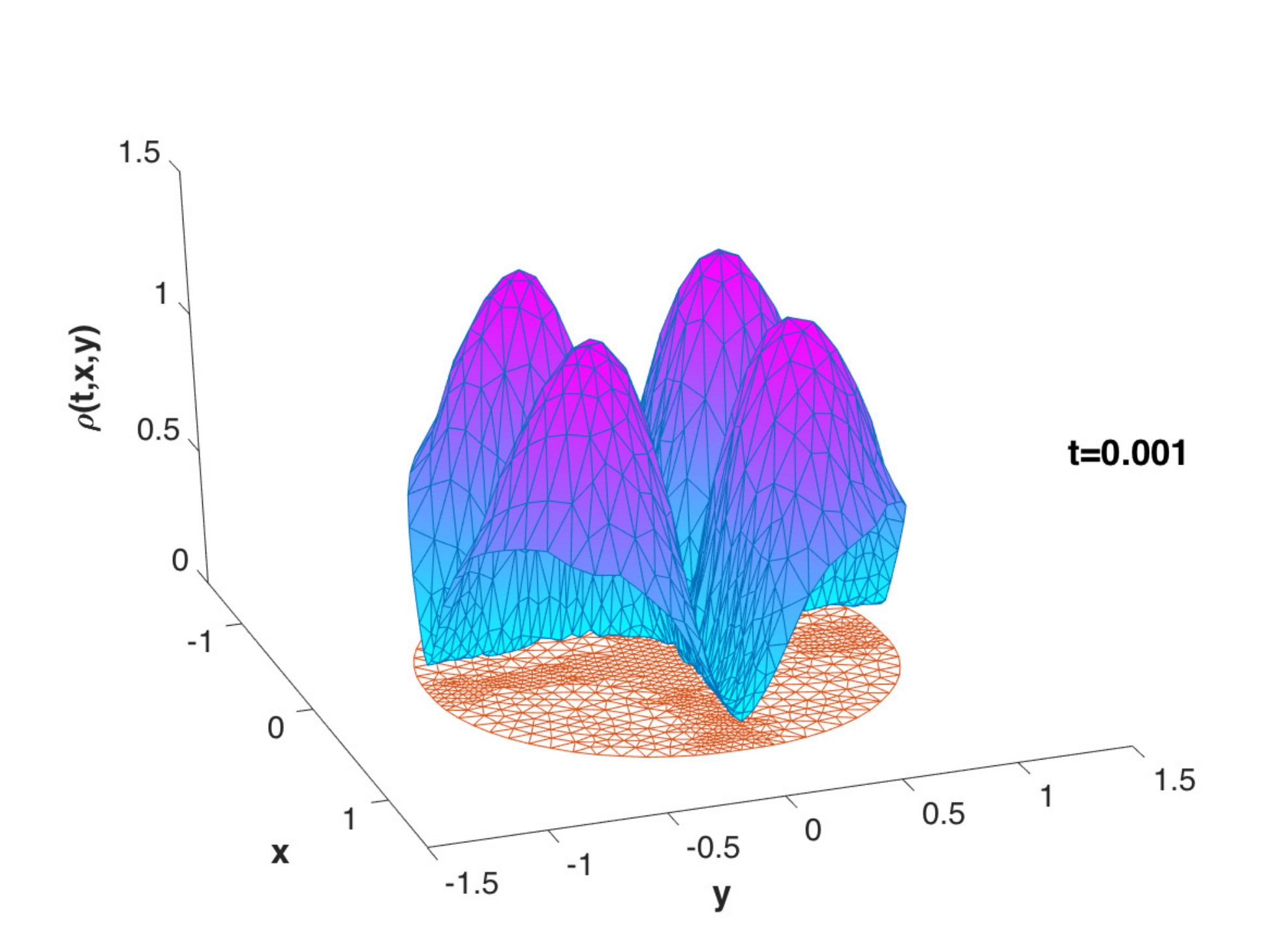}\\
  \includegraphics[width=0.45\textwidth]{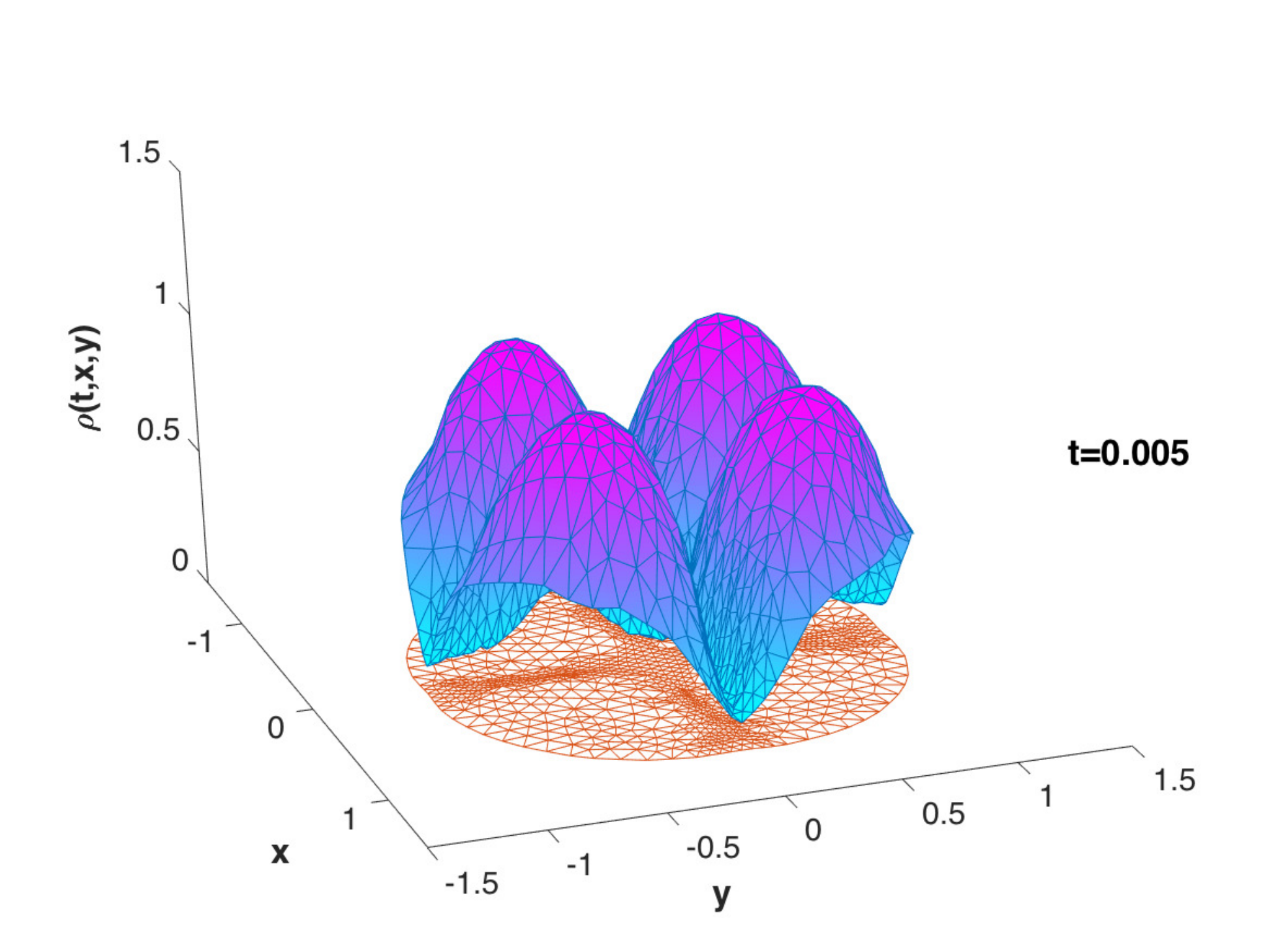}%
  \includegraphics[width=0.45\textwidth]{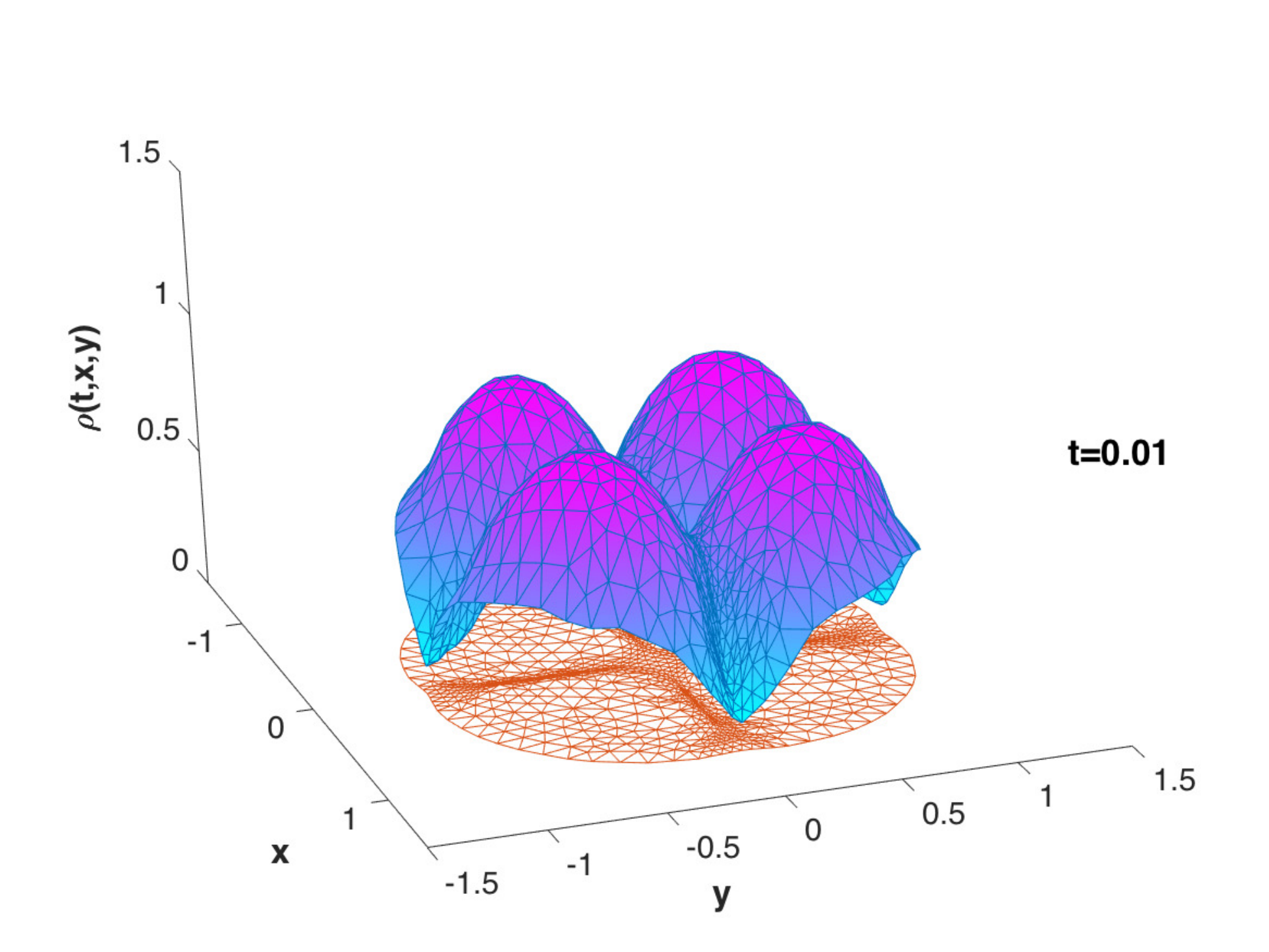}\\
  \includegraphics[width=0.45\textwidth]{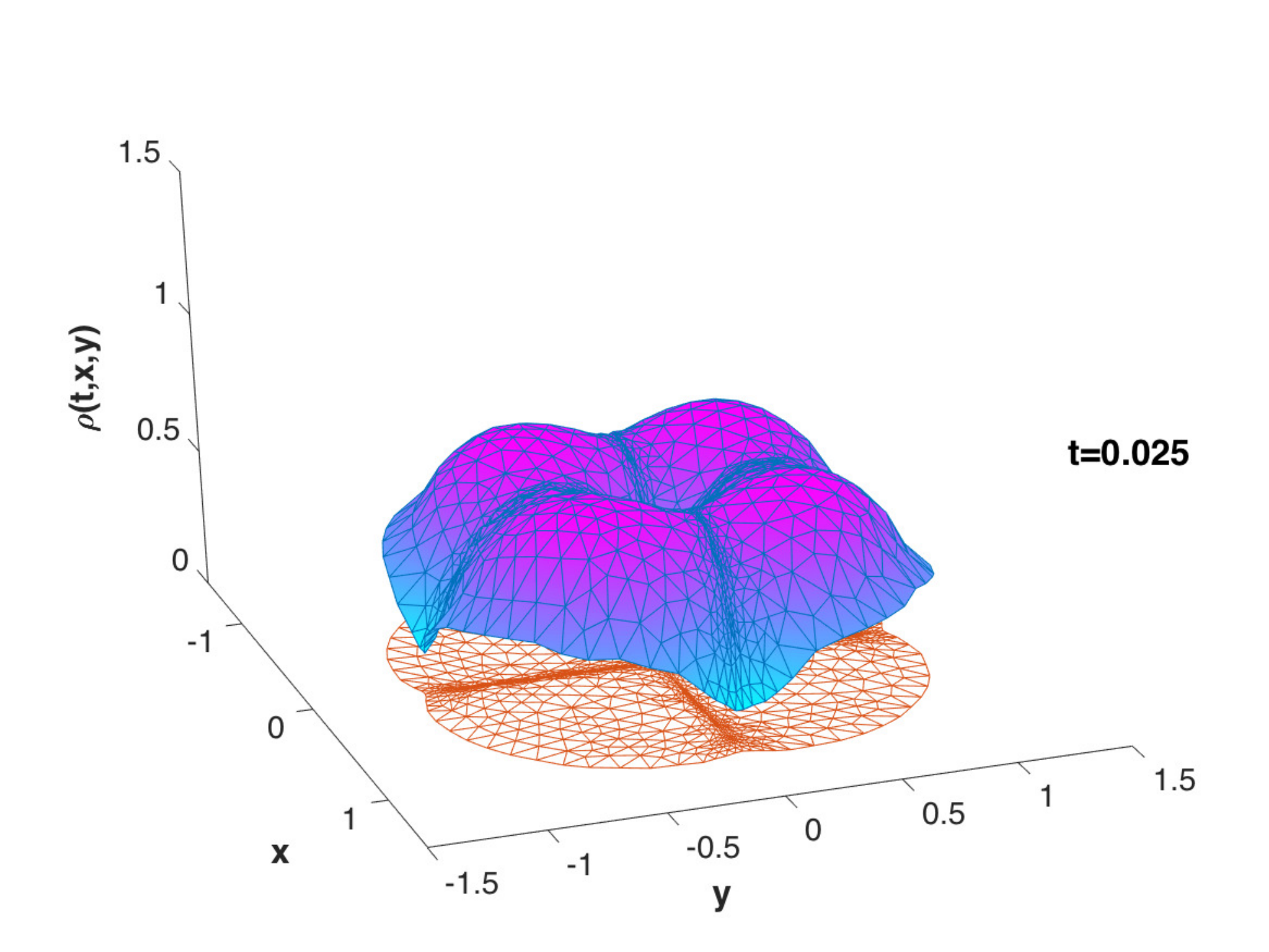}%
  \includegraphics[width=0.45\textwidth]{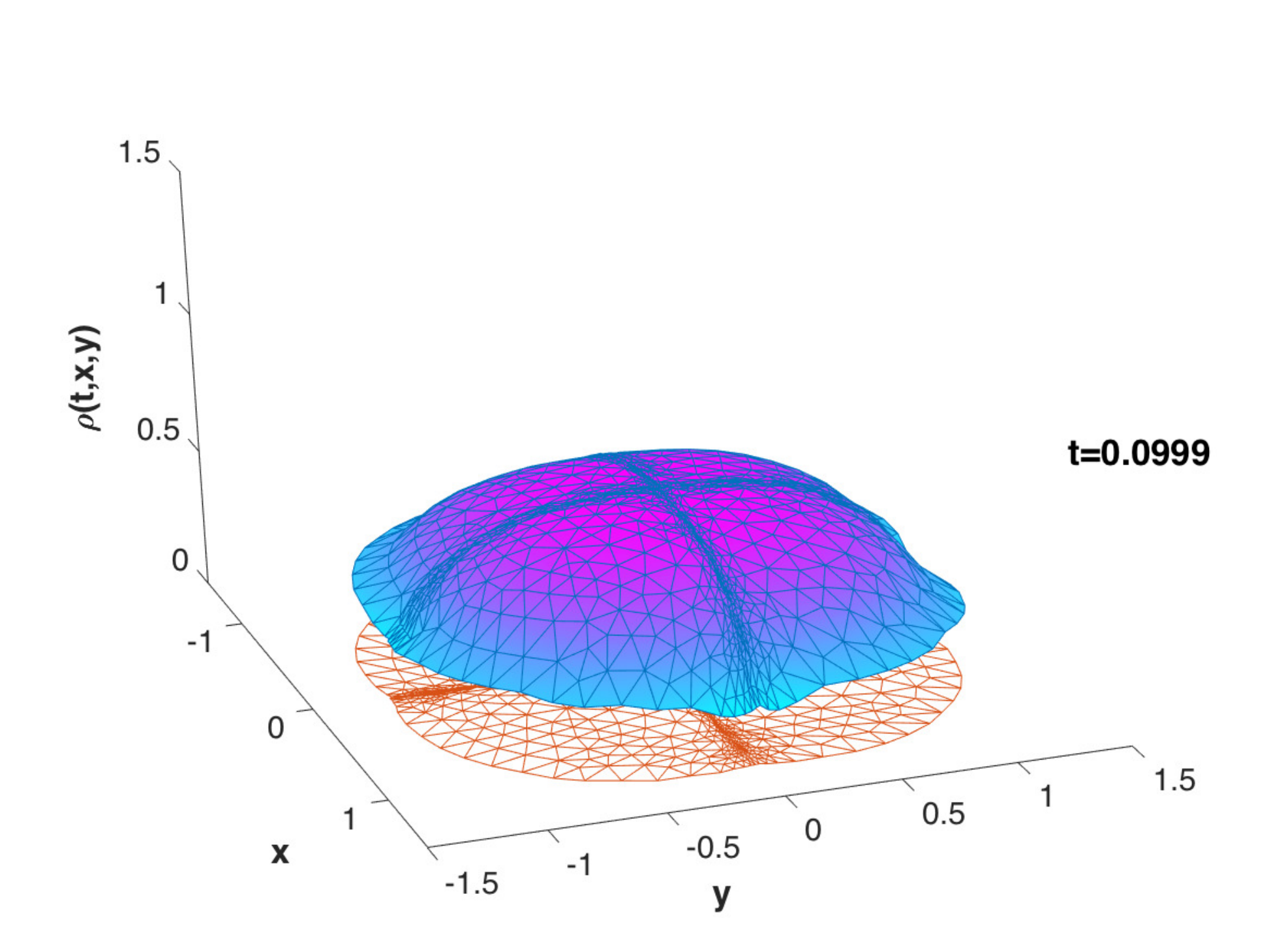}
  % Simulation results go here!
  \caption{Numerical experiment 2: fully discrete evolution for the initial density from~\eqref{eq:exp2ic}
    under the free porous medium equation.
    Snapshots are taken at times $t=0.001$, $t=0.005$, $t=0.01$, $t=0.025$, and $t=0.1$.}
  \label{fig:PME4evol}
\end{figure}

\subsubsection*{Numerical experiment 3: two peaks merging into one under the influence of a confining potential}
In this example we consider as initial condition two peaks, connected by a thin layer of mass, given by
\begin{equation}
\label{eq:peaks}
\rho_0(x,y)=\exp[-20((x-0.35)^2+(y-0.35)^2)]+\exp[-20((x+0.35)^2+(y+0.35)^2 )]+0.001.
\end{equation}
We choose a triangulation of the square $[-1.5,1.5]^2$ and initialise
the discrete solution piecewise constant in each triangle, with a
value corresponding to~\eqref{eq:peaks},
evaluated in the centre of mass of each triangle.
We solve the porous medium equation with a confining potential, i.e.\
\eqref{eq:NFPsystem} with $P(r)=r^m$ and $V(x,y)=5(x^2+y^2)/2$.
The time step is $\tau=0.001$ and the final time is $T=0.2.$

Figure~\ref{fig:PME6evol} shows the evolution from the initial
density. As time increases the peaks smoothly merge into each
other. As the thin layer around the peaks is also subject to the
potential the triangulated domain shrinks in time. 
Even if we do not know how to prevent theoretically the intersection of the images of the discrete Lagrangian maps,
this seems not to be a problem in practice.
\begin{figure}
  \includegraphics[width=0.45\textwidth]{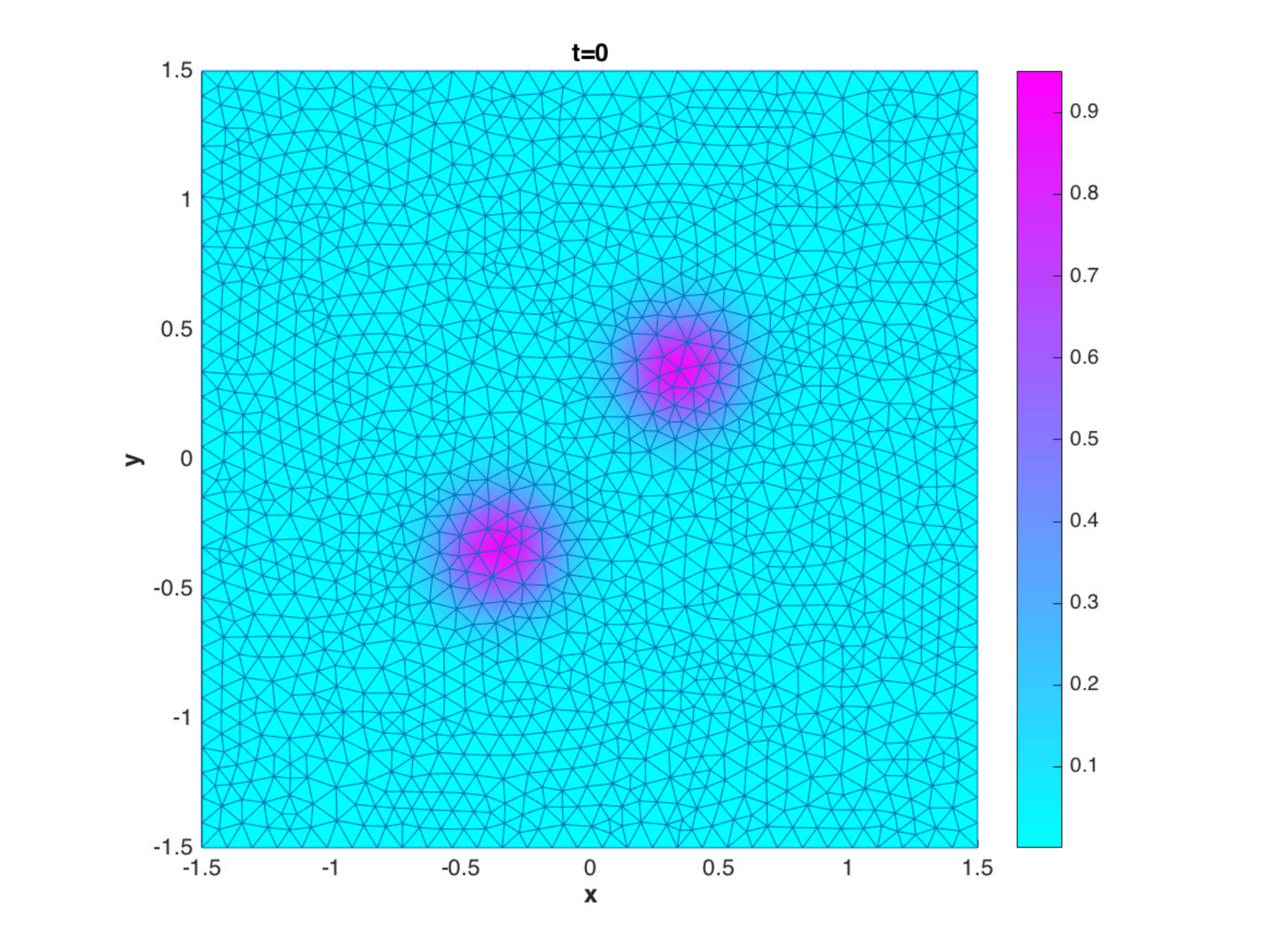}%
  \includegraphics[width=0.45\textwidth]{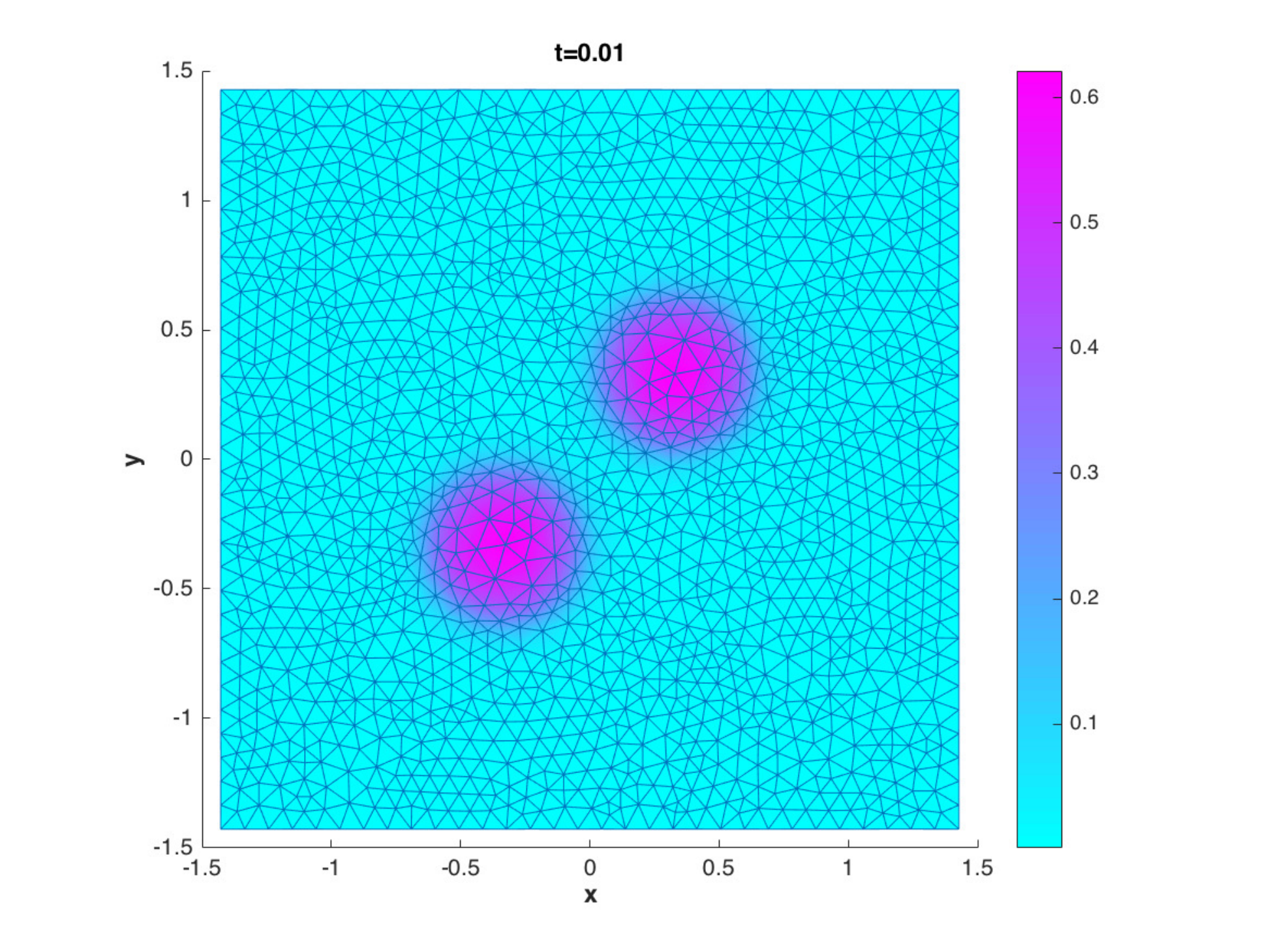}\\
  \includegraphics[width=0.45\textwidth]{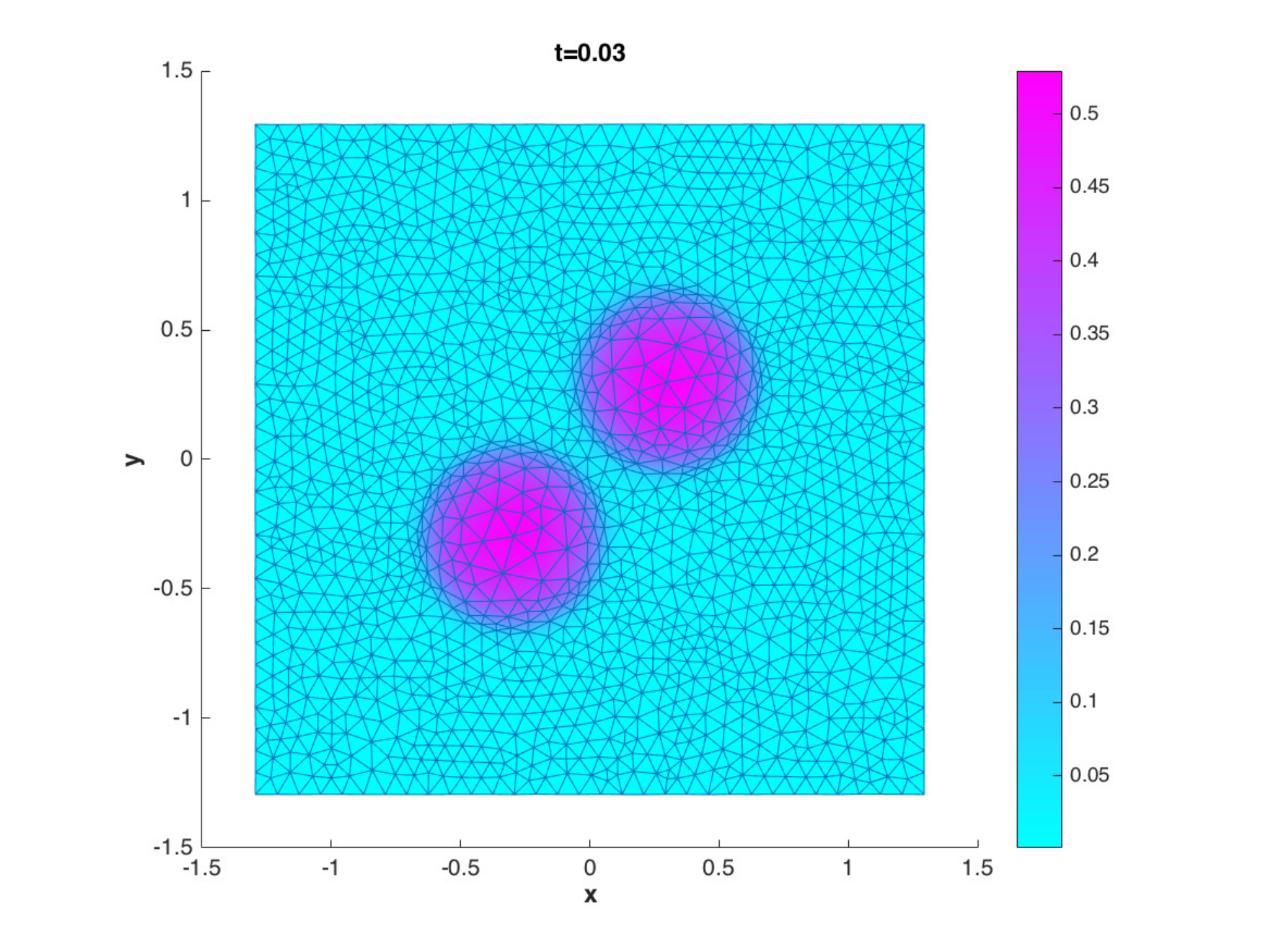}%
  \includegraphics[width=0.45\textwidth]{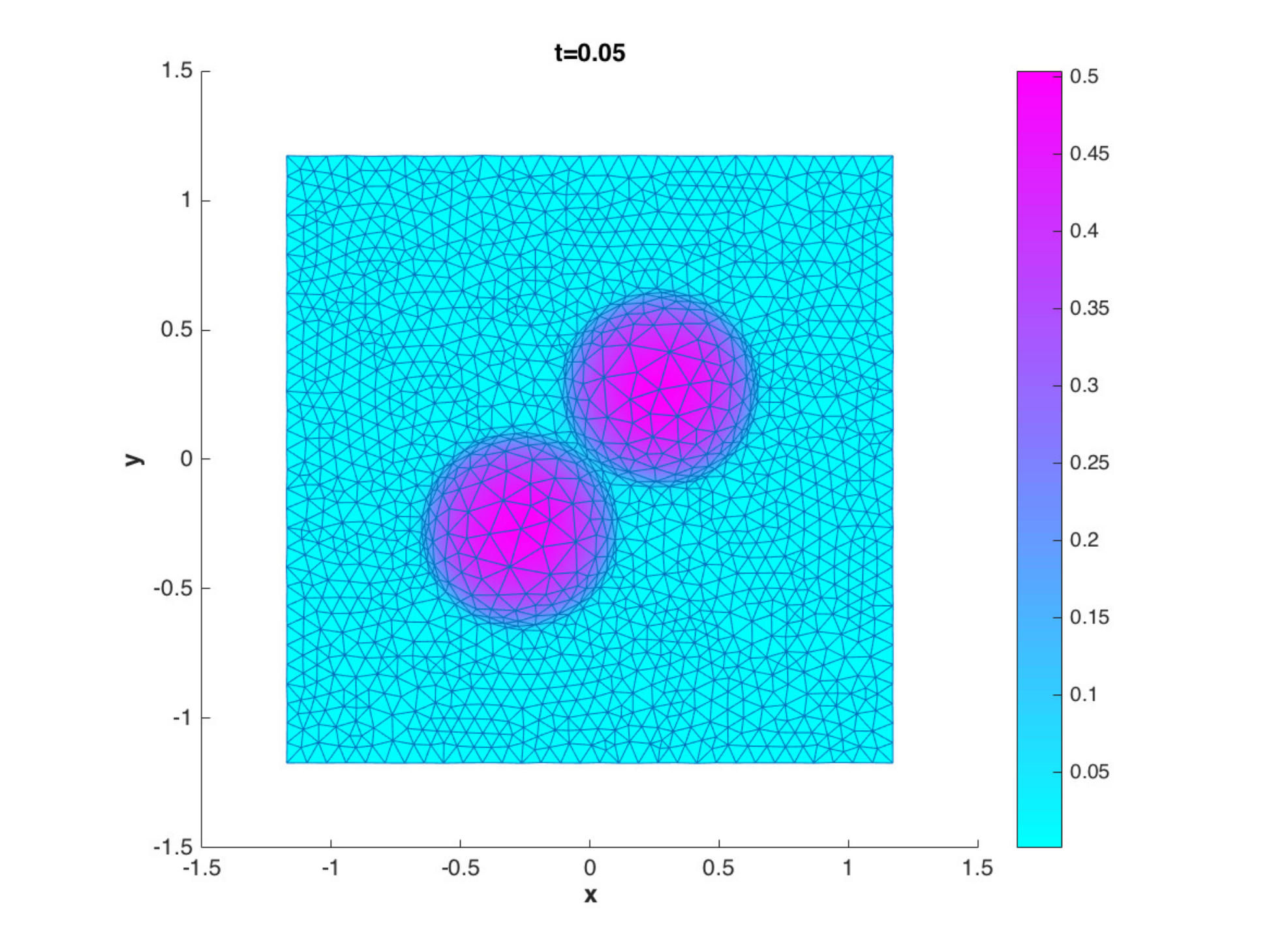}\\
  \includegraphics[width=0.45\textwidth]{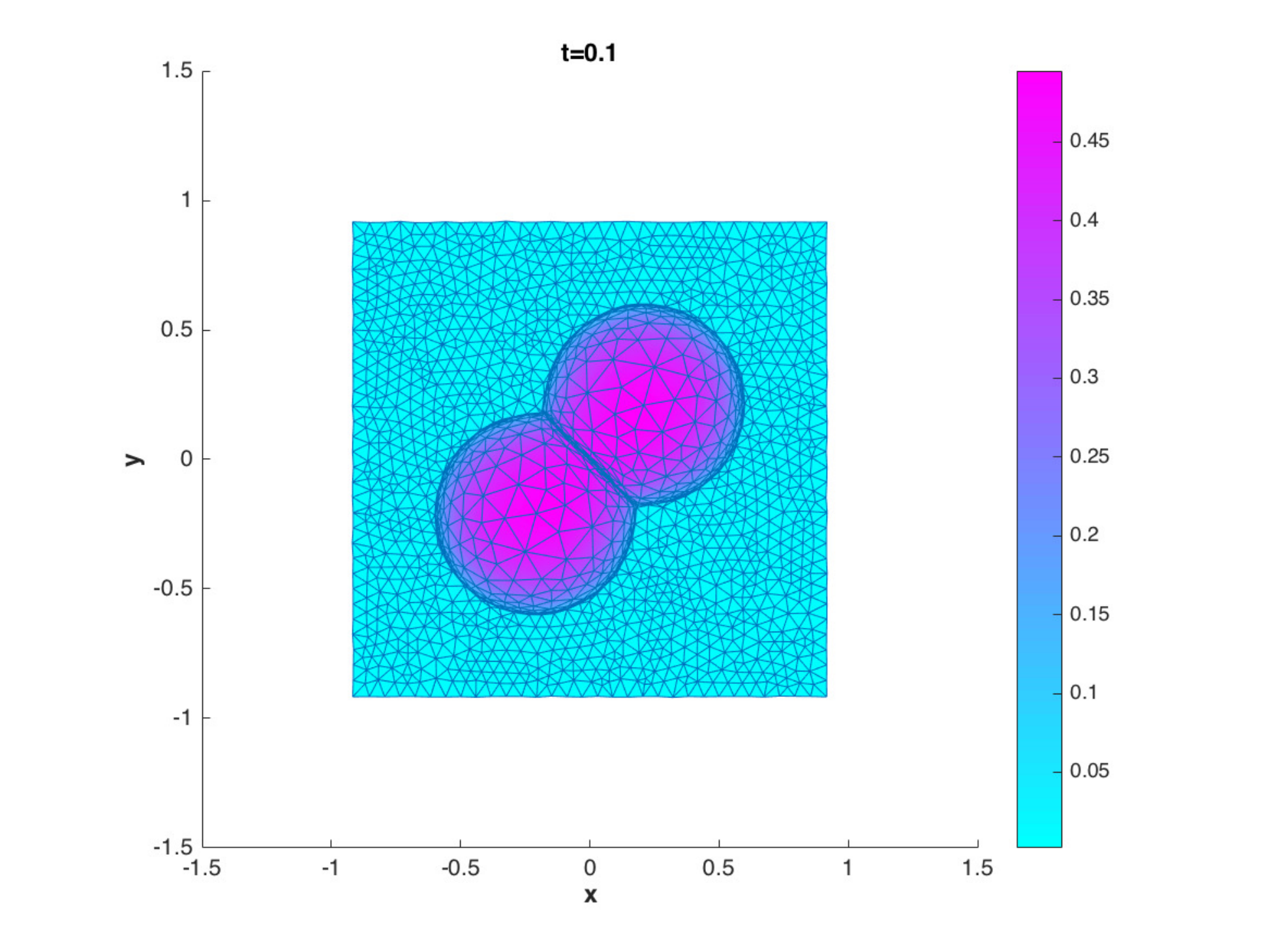}%
  \includegraphics[width=0.45\textwidth]{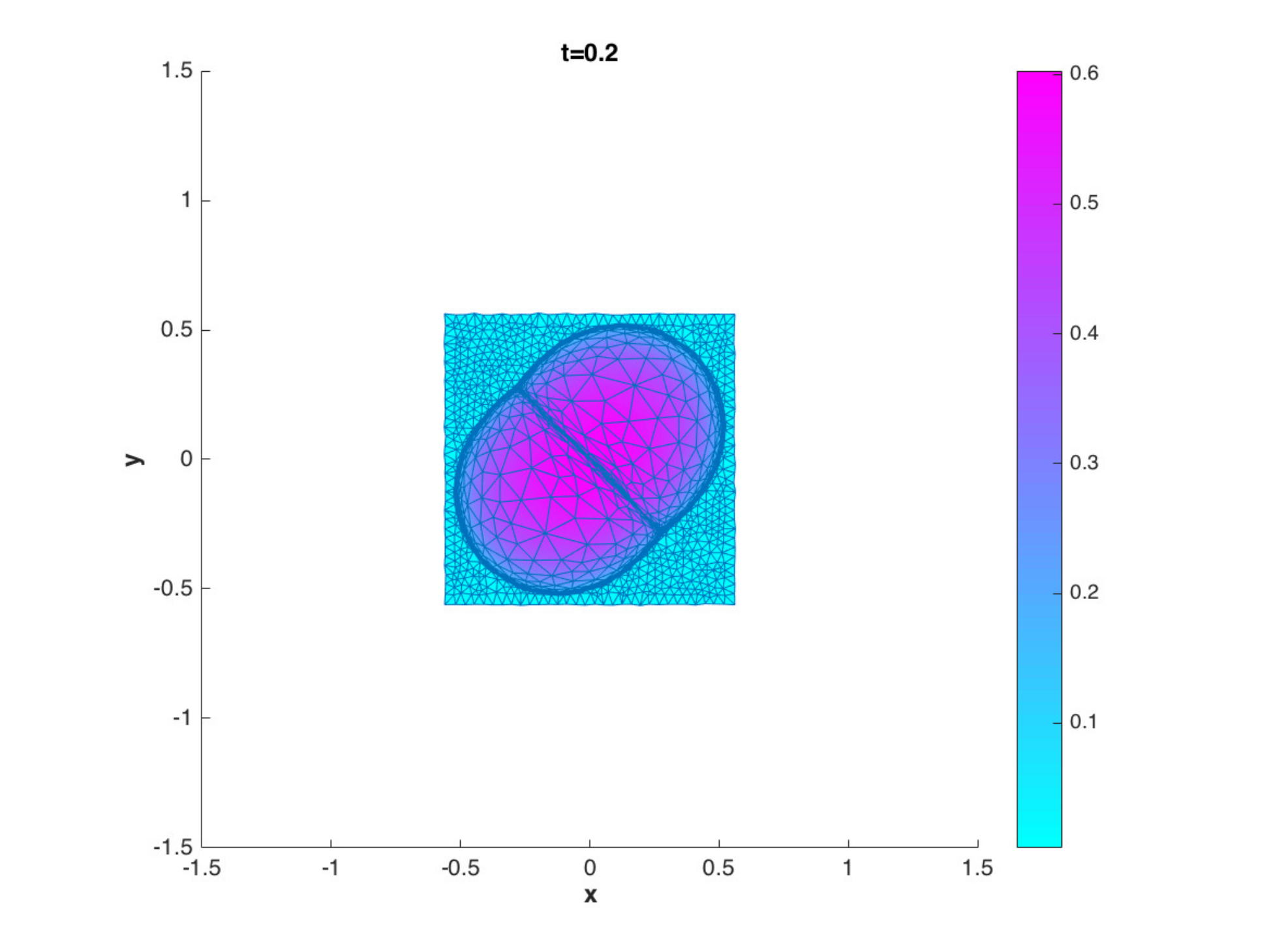}
  \caption{Numerical experiment 3: evolution of two peaks merging under the porous medium
    equation with a confining potential.}
  \label{fig:PME6evol}
\end{figure}
As time evolves, the discrete solution approaches the
steady state Barenblatt profile given by
\begin{equation}
  \label{eq:BBprofile2}
  \BB(z) = \left(C-\frac53 ||z||^2 \right)_+^{\frac12},
\end{equation}
where $C$ is chosen as the mass of the density.
The plot in Figure~\ref{fig:PME6l1dist} shows the
exponential decay of the $l_1$-distance of the discrete solution to the
    steady state Barenblatt profile \eqref{eq:BBprofile2}. We observe
    that the decay agrees very well with the analytically predicted
    decay $\exp(-5t)$ until $t=0.08$.
For larger times,
one would monitor triangle quality numerically, and re-mesh, locally
coarsening the triangulation where necessary.
\begin{figure}
  \includegraphics[width=0.45\textwidth]{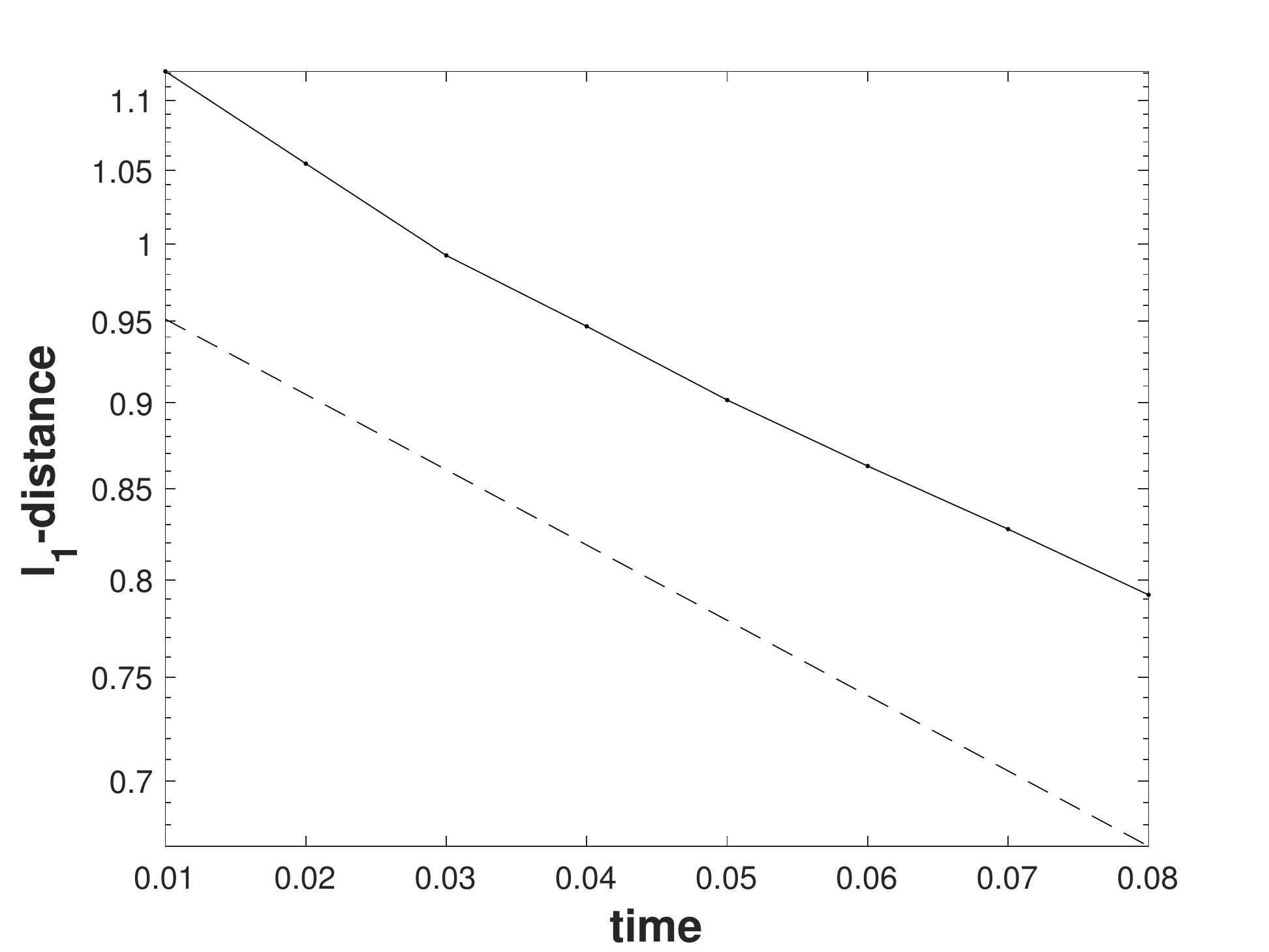}%
  \caption{Numerical experiment 3: two merging peaks: plot of the $l_1$-distance of the discrete solution to the
    steady state Barenblatt profile in comparison with the analytical
    decay $c\exp(-5t)$.}
  \label{fig:PME6l1dist}
\end{figure}

\subsubsection*{Numerical experiment 4: one peak splitting under the influence of a quartic potential}
We consider as the initial condition
\begin{equation}
\label{eq:bump}
\rho_0(x,y)=1-(x^2+y^2).
\end{equation}
We choose a triangulation of the unit circle and initialise
the discrete solution piecewise constant in each triangle, with a
value corresponding to~\eqref{eq:bump},
evaluated in the centre of mass of each triangle.
We solve the porous medium equation with a quartic potential, i.e.\
\eqref{eq:NFPsystem} with $P(r)=r^m$ and $V(x)=5(x^2+(1-y^2)^2)/2$.
The time step is $\tau=0.005$ and the final time is $T=0.02.$

Figure~\ref{fig:PME5evol} shows the evolution of the initial
density. As time increases the initial density is progressively split,
until two new maxima emerge which are connected by a thin layer.  For larger times,
when certain triangles become excessively distorted, one would monitor
triangle quality numerically, and re-mesh, locally
refining the triangulation where necessary.
\begin{figure}
  \includegraphics[width=0.45\textwidth]{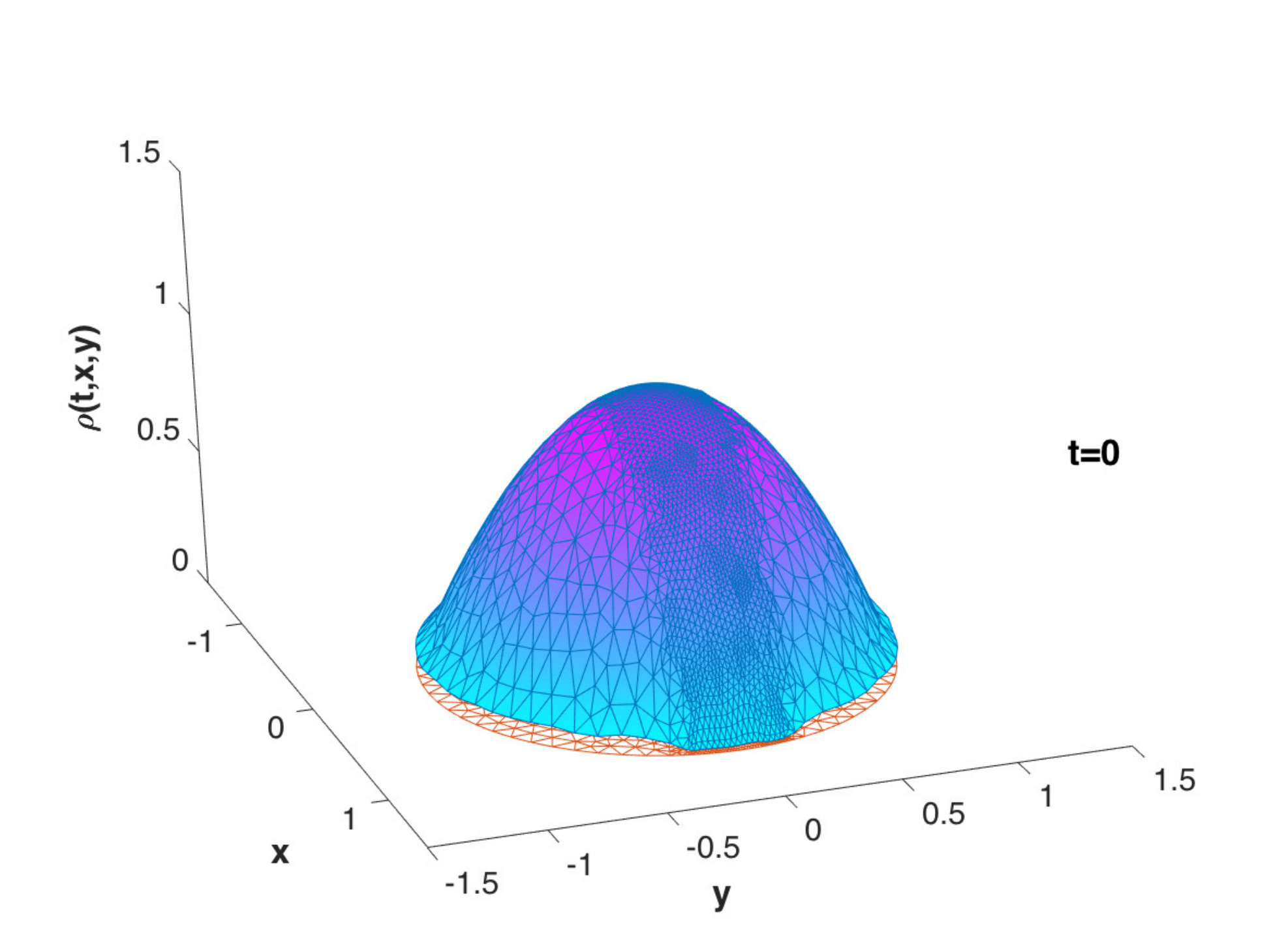}%
  \includegraphics[width=0.45\textwidth]{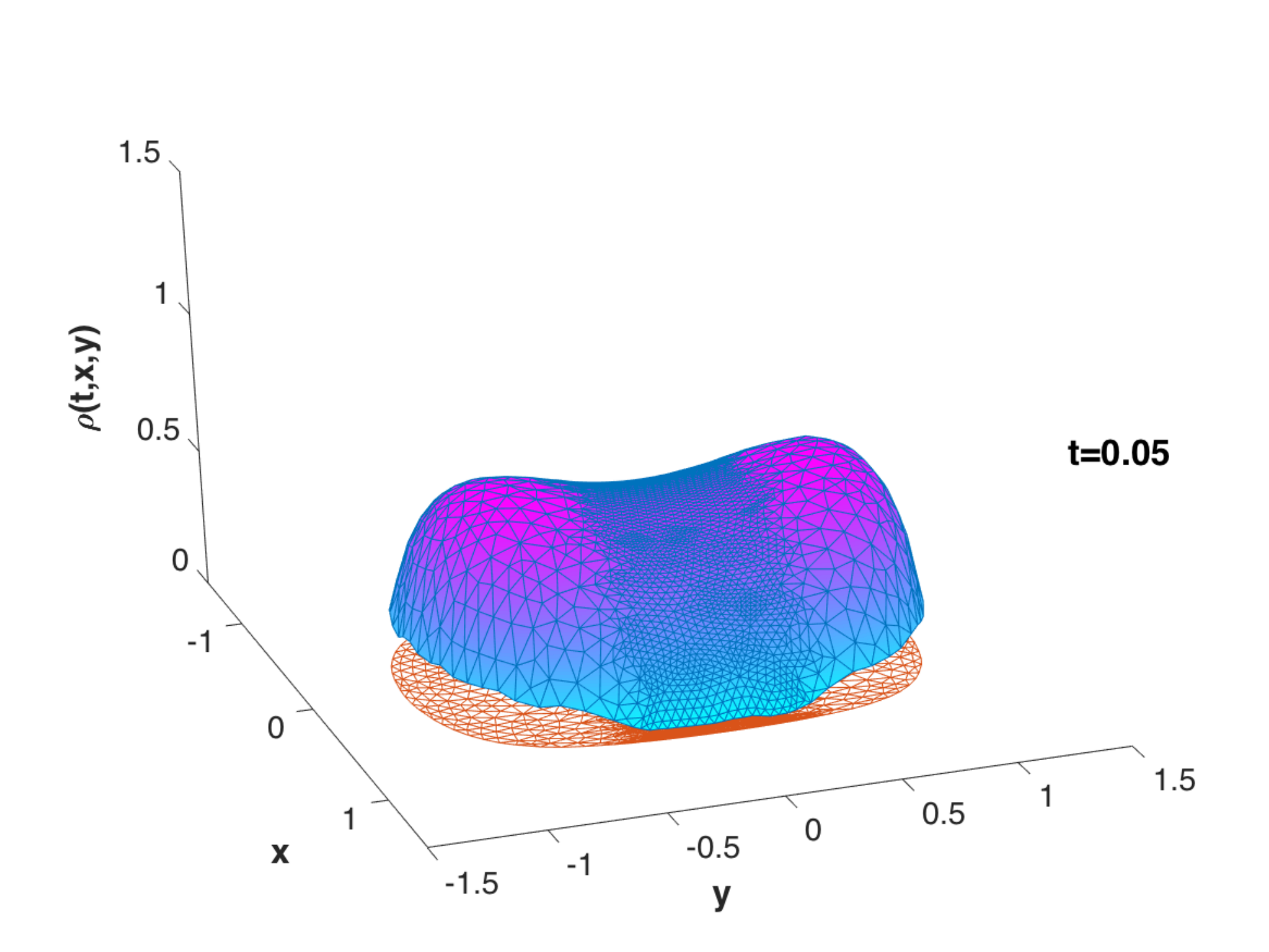}\\
  \includegraphics[width=0.45\textwidth]{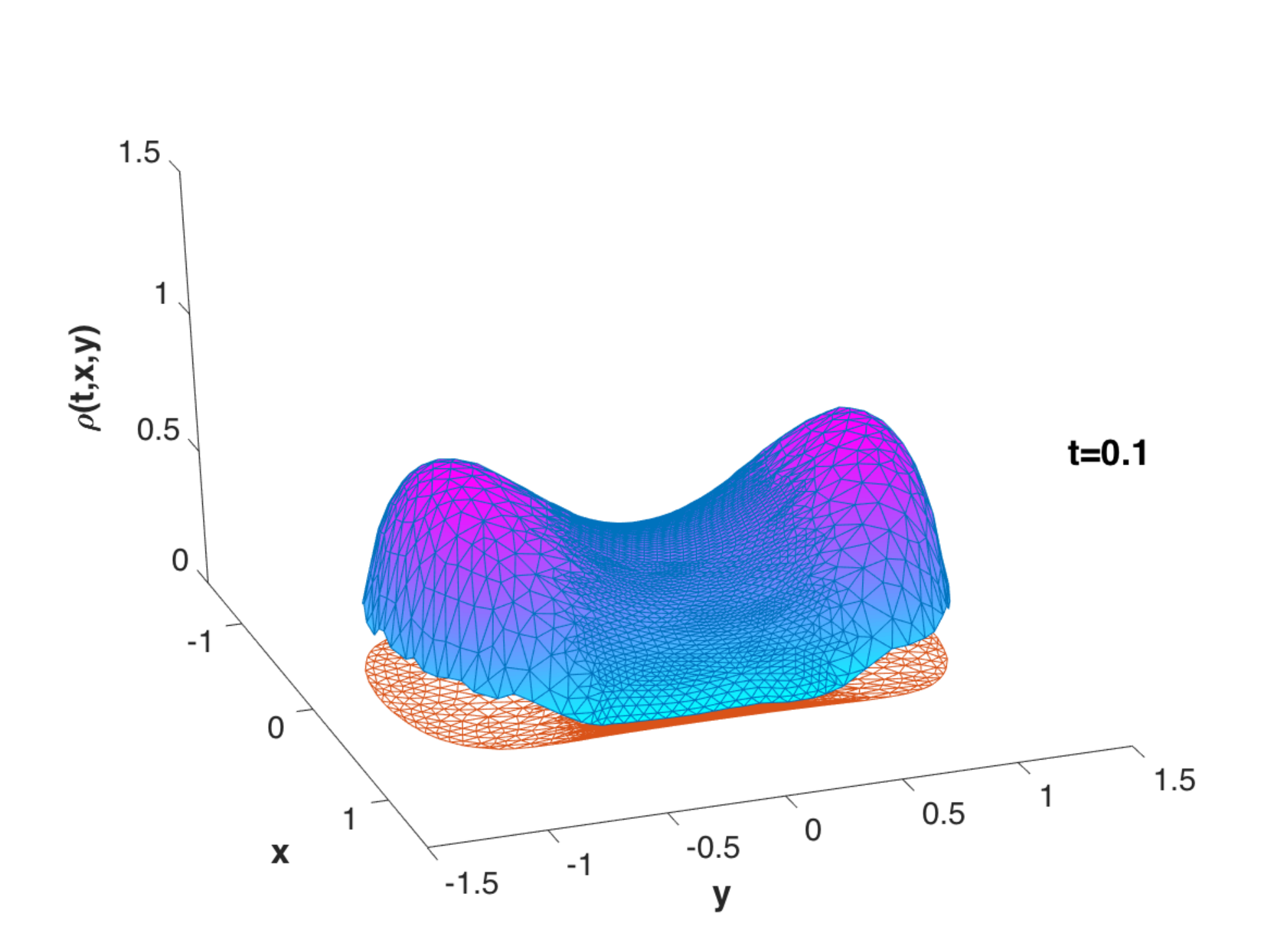}%
  \includegraphics[width=0.45\textwidth]{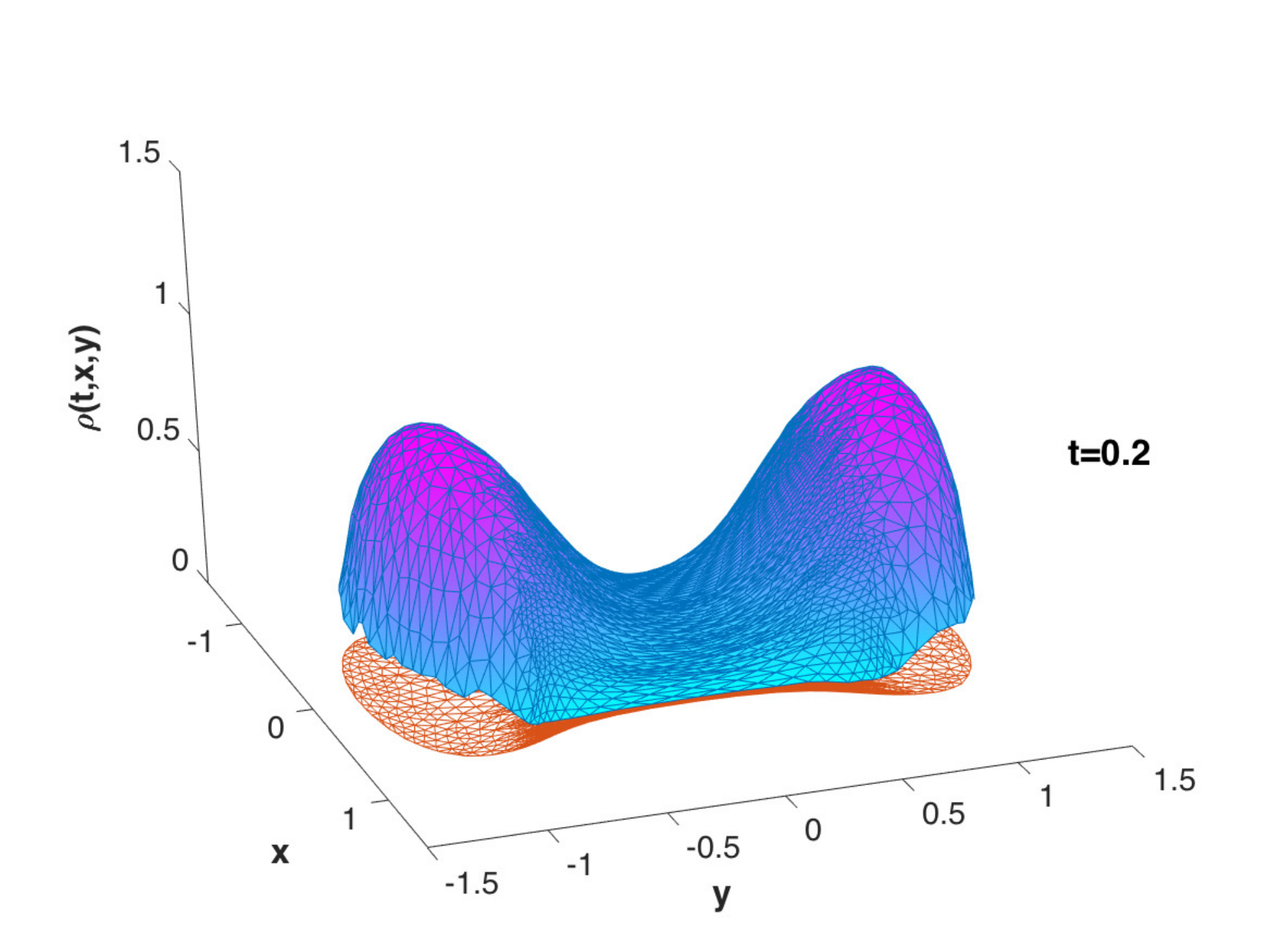}\\
 % Simulation results go here!
  \caption{Numerical experiment 4: evolution of the initial density under the porous medium
    equation with a quartic potential.}
  \label{fig:PME5evol}
\end{figure}

\appendix

\section{Proof of the Lagrangian representation}
\label{sct:Lagrange}
\begin{proof}[Proof of Lemma~\ref{lem:Lagrange}]
  We verify that the density function given by $(G_t^{-1})_{\#}\rho_t$ on $K\subset{\R^d}$ is constant with respect to time $t$;
  the identity~\eqref{eq:push} then follows since
  \begin{align*}
    \rho_t = (G_t\circ G_t^{-1})_\#\rho_t = (G_t)_\#\big[(G_t^{-1})_{\#}\rho_t\big] = (G_t)_\#\big[(G_0^{-1})_\#\rho^0\big] = (G_t)_\#\refrho.
  \end{align*}
  Firstly, from the definition of the inverse,
  \[
  G_t^{-1}\circ G_t = \id
  \]
  for all $t$, differentiating with respect to time yields
  \[
  \df (G_t^{-1}) \circ G_t\, \pd_{t} G_t + \pd_{t} (G_t^{-1}) \circ G_t = 0,
  \]
  and so, using~\eqref{eq:Lagrange} and~\eqref{eq:velo},
  \begin{equation}
    \label{eqn:GTimeDiff}
    \pd_{t} (G_t^{-1}) = - \df (G_t^{-1}) (\pd_{t} G_t \circ G_t^{-1}) = - \df (G_t^{-1})\velo[\rho_t].
  \end{equation}
  Now, let $\varphi$ be a smooth test function, and consider
  \begin{align*}
    &\frac{\dn}{\dn t} \int \varphi \,(G_t^{-1})_{\#} \rho_t \\
    &\qquad = \frac{\dn}{\dn t} \int (\varphi \circ G_t^{-1}) \rho_t \\
    &\qquad = \int (\varphi \circ G_t^{-1}) \pd_{t} \rho_t + \int \df \varphi \circ G_t^{-1} \pd_{t} (G_t^{-1}) \rho_t \\
    &\qquad = -\int (\varphi \circ G_t^{-1}) [\Div (\rho_t v(\rho_t))] - \int (\df \varphi \circ G_t^{-1})\,\df (G_t^{-1})\,v(\rho_t) \rho_t
    && \text{by~\eqref{eq:NFPsystem} and~\eqref{eq:Lagrange} } \\
    &\qquad = \int (\df \varphi \circ G_t^{-1}) \df (G_t^{-1}) \,[v(\rho_t) - v(\rho_t)] \rho_t && \text{integrating by parts} \\
    &\qquad = 0.
  \end{align*}
  As $\varphi$ was arbitrary, $(G_t^{-1})_{\#}\rho_t$ is constant with respect to time.
\end{proof}

\section{Technical lemmas}
\begin{lemma}
  \label{lem:triint}
  Given $g_0,g_1,\ldots,g_d\in\R^d$, then
  \begin{align}
    \label{eq:triint}
    \aint_{\stdtri^d}\Big\|g_0+\sum_{j=1}^d\omega_j(g_j-g_0)\Big\|^2\dd\omega
    = \frac2{(d+1)(d+2)}\sum_{0\le i\le j\le d}g_i\cdot g_j.
  \end{align}
\end{lemma}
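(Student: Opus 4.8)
The plan is to reduce the $d$-dimensional integral to a single scalar computation by expanding the square and using the symmetry of the standard simplex $\stdtri^d$. Writing the integrand as
\[
\Big\|g_0+\sum_{j=1}^d\omega_j(g_j-g_0)\Big\|^2
= \Big\|\sum_{i=0}^d \lambda_i(\omega)\,g_i\Big\|^2,
\]
where $\lambda_0(\omega)=1-\sum_{j=1}^d\omega_j$ and $\lambda_i(\omega)=\omega_i$ for $i\ge1$ are the barycentric coordinates, I would first observe that this recasts the interpolated point in the symmetric form $\sum_i\lambda_i g_i$ with $\sum_i\lambda_i=1$. Expanding the norm gives
\[
\Big\|\sum_{i=0}^d \lambda_i g_i\Big\|^2 = \sum_{i,j=0}^d \lambda_i\lambda_j\,(g_i\cdot g_j),
\]
so the whole problem collapses to evaluating the averaged moments $\aint_{\stdtri^d}\lambda_i\lambda_j\dd\omega$.

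The key step is therefore to compute these second moments of the barycentric coordinates over the standard simplex. By the full symmetry of $\stdtri^d$ under permutations of the vertices (equivalently, of the $\lambda_i$), there are only two distinct values: a diagonal one $I_{\mathrm{diag}}:=\aint_{\stdtri^d}\lambda_i^2\dd\omega$, equal for all $i$, and an off-diagonal one $I_{\mathrm{off}}:=\aint_{\stdtri^d}\lambda_i\lambda_j\dd\omega$ for $i\ne j$, equal for all pairs. These can be obtained from the standard Dirichlet integral
\[
\int_{\stdtri^d}\prod_{i=0}^d\lambda_i^{\alpha_i}\dd\omega
= \frac{\prod_{i=0}^d \alpha_i!}{\big(d+\sum_i\alpha_i\big)!},
\]
together with $|\stdtri^d|=1/d!$, which yields $I_{\mathrm{diag}}=\tfrac{2}{(d+1)(d+2)}$ and $I_{\mathrm{off}}=\tfrac{1}{(d+1)(d+2)}$; equivalently one notes $I_{\mathrm{diag}}=2\,I_{\mathrm{off}}$.

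Substituting these back into the expanded sum gives
\[
\aint_{\stdtri^d}\Big\|\sum_{i=0}^d\lambda_i g_i\Big\|^2\dd\omega
= \sum_{i=0}^d I_{\mathrm{diag}}\,|g_i|^2 + \sum_{i\ne j} I_{\mathrm{off}}\,(g_i\cdot g_j)
= I_{\mathrm{off}}\Big(2\sum_i |g_i|^2 + 2\sum_{i<j} g_i\cdot g_j\Big),
\]
and since $\sum_{0\le i\le j\le d} g_i\cdot g_j = \sum_i|g_i|^2 + \sum_{i<j} g_i\cdot g_j$, the right-hand side equals $2 I_{\mathrm{off}}\sum_{0\le i\le j\le d} g_i\cdot g_j = \tfrac{2}{(d+1)(d+2)}\sum_{0\le i\le j\le d} g_i\cdot g_j$, which is exactly~\eqref{eq:triint}. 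I expect no genuine obstacle here; the only point requiring minor care is the bookkeeping between the symmetric sum $\sum_{i\le j}$ in the statement and the split into diagonal and strictly-off-diagonal terms, together with getting the normalizing factor $1/|\stdtri^d|$ right in the averaged integral $\aint$. An alternative to invoking the Dirichlet formula is to verify the two moments $I_{\mathrm{diag}}$ and $I_{\mathrm{off}}$ directly by induction on $d$ or by a one-variable computation, but the Dirichlet identity makes the whole argument a one-line lookup.
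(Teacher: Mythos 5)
Your proof is correct. It reaches the identity by the same basic strategy as the paper --- expand the square and evaluate second moments of the coordinates over the simplex --- but the computational route is genuinely different and somewhat slicker. The paper keeps $g_0$ distinguished, expands $\|g_0+\sum_j\omega_j(g_j-g_0)\|^2$ directly, and is then led to three distinct averaged moments, $\aint_{\stdtri}\omega_d\dd\omega$, $\aint_{\stdtri}\omega_d^2\dd\omega$ and $\aint_{\stdtri}\omega_{d-1}\omega_d\dd\omega$, each computed by hand via Fubini and one-dimensional Beta-type integrals, followed by a nontrivial term-collection step to recover the symmetric sum $\sum_{i\le j}g_i\cdot g_j$. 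By passing to barycentric coordinates first, you exploit the full permutation symmetry of the $d+1$ vertices rather than only the symmetry among $\omega_1,\dots,\omega_d$, so you need just two moments, $I_{\mathrm{diag}}$ and $I_{\mathrm{off}}$, and the final symmetric form of the answer appears automatically with no rearrangement. The price is invoking the Dirichlet integral formula as an external fact (your values $I_{\mathrm{diag}}=\tfrac{2}{(d+1)(d+2)}$ and $I_{\mathrm{off}}=\tfrac{1}{(d+1)(d+2)}$ are correct); the paper's version is self-contained but longer. Either write-up is acceptable; if you want yours fully self-contained, a one-line verification of the two moments (e.g.\ by the same Fubini computation the paper uses, or by induction on $d$) would close that small gap.
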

\begin{proof}
  Thanks to the symmetry of the integral with respect to the exchange of the components $\omega_j$,
  the left-hand side of~\eqref{eq:triint} equals to
  \begin{align}
    \label{eq:triint1}
    \begin{split}
      &\|g_0\|^2
      + 2\left(\aint_{\stdtri}\omega_d\dd\omega\right)\sum_{1\le j\le d}g_0\cdot(g_j-g_0) \\
      & + \left(\aint_{\stdtri}\omega_d^2\dd\omega\right)\sum_{1\le j\le d}\|g_j-g_0\|^2
      + 2\left(\aint_{\stdtri}\omega_{d-1}\omega_d\dd\omega\right)\sum_{1\le i<j\le d}(g_i-g_0)\cdot(g_j-g_0).
    \end{split}
  \end{align}
  We calculate the integrals, using Fubini's theorem.
  First integral:
  \begin{align*}
    \aint_{\stdtri}\omega_d\dd\omega
    & = \frac1{|\stdtri^d|}\int_0^1 \omega_d\,(1-\omega_d)^{d-1}|\stdtri^{d-1}|\dd\omega_d \\
    & = \frac{|\stdtri^{d-1}|}{|\stdtri^d|}\int_0^1 (1-z)\,z^{d-1}\dd z
    = d\left(\frac1{d}-\frac1{d+1}\right) = \frac1{d+1}.
  \end{align*}
  Second integral:
  \begin{align*}
    \aint_{\stdtri}\omega_d^2\dd\omega
    & = \frac1{|\stdtri^d|}\int_0^1 \omega_d^2\,(1-\omega_d)^{d-1}|\stdtri^{d-1}|\dd\omega_d \\
    & = \frac{|\stdtri^{d-1}|}{|\stdtri^d|}\int_0^1 (1-z)^2\,z^{d-1}\dd z
    = d\left(\frac1d-\frac2{d+1}+\frac1{d+2}\right) = \frac2{(d+1)(d+2)}.
  \end{align*}
  Third integral:
  \begin{align*}
    \aint_{\stdtri}\omega_{d-1}\omega_d\dd\omega
    & = \frac1{|\stdtri^d|}\int_0^1 \left[\int_0^{1-\omega_d}\omega_{d-1}\omega_d
      \,(1-\omega_{d-1}-\omega_d)^{d-2}|\stdtri^{d-2}|\dd\omega_{d-1}\right]\dd\omega_d \\
    & = \frac{|\stdtri^{d-2}|}{|\stdtri^d|}\int_0^1\left[\int_0^z (1-z)(z-y)\,y^{d-2}\dd y\right]\dd z \\
    & = d(d-1)\int_0^1\left[\frac1{d-1}-\frac1d\right](1-z)z^d\dd z
    = \frac1{d+1}-\frac1{d+2}=\frac1{(d+1)(d+2)}.
  \end{align*}
  Substitute this into~\eqref{eq:triint1}:
  \begin{align*}
    &\left(1-\frac2{d+1}+\frac{d^2+d}{(d+1)(d+2)}\right)\|g_0\|^2
    +\left(\frac2{d+1}-\frac{2d+2}{(d+1)(d+2)}\right)\sum_{1\le j\le d}g_0\cdot g_j \\
    & \qquad +\frac2{(d+1)(d+2)}\sum_{1\le j\le d}\|g_j\|^2
    +\frac2{(d+1)(d+2)}\sum_{1\le i<j\le d}g_i\cdot g_j\\
    &=\frac2{(d+1)(d+2)}\left(\|g_0\|^2 + \sum_{1\le j\le d}g_0\cdot g_j + \sum_{1\le j\le d}\|g_j\|^2 + \sum_{1\le i<j\le d}g_i\cdot g_j \right).
  \end{align*}
  Collecting terms yields the right-hand side of~\eqref{eq:triint}.
\end{proof}
\begin{lemma}
  \label{lem:JAJ}
  For each $A\in\R^{2\times 2}$, we have $\jm A\jm^T=(\det A)\,A^{-T}$.
\end{lemma}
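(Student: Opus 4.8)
Since $A$ enters the statement through $A^{-T}$, the matrix $A$ is tacitly assumed invertible, which is exactly the case that occurs in~\eqref{eq:Pi} and its planar specialisation (there the relevant matrices have positive determinant). The plan is to avoid attacking the triple product $\jm A\jm^T$ head-on and instead to reduce the whole identity to the adjugate. First I would record the elementary facts about the rotation $\jm$, namely $\jm^T=\jm^{-1}=-\jm$ and $\det\jm=1$, which already make it plausible that $\jm A\jm^T$ shares the determinant of $A$.

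The single computational ingredient is the observation that, for every $2\times2$ matrix $A=\begin{pmatrix} a & b \\ c & d\end{pmatrix}$,
\begin{align*}
  \jm A\jm^T = \begin{pmatrix} d & -c \\ -b & a \end{pmatrix} = (\adj A)^T = \adj(A^T),
\end{align*}
which is verified by one two-by-two matrix multiplication (or read off from the classical cofactor formula for the adjugate). This is the only genuine arithmetic in the argument.

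With that identity in hand I would clear the inverse by multiplying the claim on the right by $A^T$, reducing $\jm A\jm^T=(\det A)\,A^{-T}$ to the equivalent, inverse-free polynomial identity $\jm A\jm^T A^T=(\det A)\,\idm$. Substituting $\jm A\jm^T=\adj(A^T)$ turns this into the standard adjugate relation $\adj(B)\,B=(\det B)\,\idm$ applied to $B=A^T$, using $\det(A^T)=\det A$; dividing through by $\det A\neq0$ then returns the stated form. I do not anticipate any real obstacle here: the entire content is the small verification $\jm A\jm^T=\adj(A^T)$, and the only point deserving a word of care is the tacit invertibility of $A$, which is guaranteed in every application of the lemma in the paper.
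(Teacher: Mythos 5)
Your proof is correct and rests on exactly the same computation as the paper's: a single explicit $2\times2$ multiplication showing $\jm A\jm^T=\left(\begin{smallmatrix} a_{22} & -a_{21} \\ -a_{12} & a_{11}\end{smallmatrix}\right)$, which the paper identifies directly as $(\det A)\,A^{-T}$ while you phrase it as $\adj(A^T)$ and invoke the adjugate relation. The detour through the adjugate is a purely cosmetic repackaging, though your remark about the tacit invertibility of $A$ is a fair (if minor) point of care that the paper leaves implicit.
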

\begin{proof}
  This is verified by direct calculation:
  \[
    \jm A\jm^T
    = \begin{pmatrix} 0 & -1 \\ 1 & 0 \end{pmatrix}
    \begin{pmatrix} a_{11} & a_{12} \\ a_{21} & a_{22} \end{pmatrix}
    \begin{pmatrix} 0 & 1 \\ -1 & 0 \end{pmatrix}
    = \begin{pmatrix} a_{22} & -a_{21} \\ -a_{12} & a_{11} \end{pmatrix}
    = (\det A)\, A^{-T}. \qedhere
  \]
\end{proof}
\begin{lemma}
  \label{lem:circle}
  With $\sigma_k\in\R^2$ defined as in~\eqref{eq:xik},
  we have that
  \begin{align*}
    \sum_{k=0}^5\jm(\sigma_{k}-\sigma_{k+1}) \left(\frac{\sigma_k+\sigma_{k+1}}3\right)^T
    = \sqrt3\ \idm.
  \end{align*}
\end{lemma}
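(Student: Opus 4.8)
The plan is to exploit the cyclic symmetry of the regular hexagon by passing to complex notation, identifying $\R^2$ with $\C$ via $(x,y)\mapsto x+\ii y$. Under this identification, multiplication by $\jm$ becomes multiplication by $\ii$, and the vertices become $\sigma_k\leftrightarrow\zeta^k$, where $\zeta:=\ee^{\ii\pi/3}$ is a primitive sixth root of unity (with the hexagonal convention $\sigma_6=\sigma_0$, consistent with $\zeta^6=1$). Each summand is a rank-one real matrix $uv^\top$, so I would first record how such a matrix acts as a real-linear map of $\C$. Writing $u\leftrightarrow w$ and $v\leftrightarrow\eta$, one has $(uv^\top)x=(v\cdot x)\,u$ and $v\cdot x=\mathrm{Re}(\bar\eta\xi)$, so the action on $\xi\in\C$ is $\xi\mapsto\tfrac12 w\bar\eta\,\xi+\tfrac12 w\eta\,\bar\xi$. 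Thus $uv^\top$ is encoded by the $\C$-linear coefficient $a=\tfrac12 w\bar\eta$ and the conjugate-linear coefficient $b=\tfrac12 w\eta$, and a map with $b=0$ is just multiplication by $a$, i.e. the real matrix $\mathrm{Re}(a)\,\idm+\mathrm{Im}(a)\,\jm$.

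Next I apply this to $S_k:=\jm(\sigma_k-\sigma_{k+1})\big(\tfrac13(\sigma_k+\sigma_{k+1})\big)^\top$. Here $w_k=\ii\zeta^k(1-\zeta)$ and $\eta_k=\tfrac13\zeta^k(1+\zeta)$. The decisive simplification is that the $\C$-linear coefficient loses its $k$-dependence, since $\zeta^k\overline{\zeta^k}=1$:
\[
a_k=\tfrac12 w_k\bar\eta_k=\tfrac{\ii}{6}(1-\zeta)(1+\bar\zeta),
\]
independent of $k$, whereas the conjugate-linear coefficient carries the factor $\zeta^{2k}$:
\[
b_k=\tfrac12 w_k\eta_k=\tfrac{\ii}{6}(1-\zeta^2)\,\zeta^{2k}.
\]

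Summing over $k=0,\dots,5$ then reduces to two elementary facts about roots of unity. First, $\sum_{k=0}^5 a_k=6a_0=\ii(1-\zeta)(1+\bar\zeta)$, which evaluates to $\sqrt3$ (a short computation with $\zeta=\tfrac12+\ii\tfrac{\sqrt3}{2}$ gives $(1-\zeta)(1+\bar\zeta)=-\ii\sqrt3$). Second, since $\zeta^2=\ee^{2\pi\ii/3}$ is a primitive cube root of unity, $\sum_{k=0}^5\zeta^{2k}=2(1+\zeta^2+\zeta^4)=0$, so $\sum_{k=0}^5 b_k=0$. Hence the total map has vanishing conjugate-linear part and $\C$-linear part equal to the real number $\sqrt3$; translating back gives $\sum_{k=0}^5 S_k=\sqrt3\,\idm$, as claimed.

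I expect the only genuine content to be the cancellation of the conjugate-linear (``spin-$2$'') contributions, which is exactly the vanishing $\sum_k\zeta^{2k}=0$; everything else is bookkeeping. As an abstract cross-check, writing $\sigma_k=R^k\sigma_0$ for the rotation $R$ by $\pi/3$ and using that $\jm$ commutes with $R$ and that $(R^k)^\top=R^{-k}$, one finds $S_k=\tfrac13 R^k P R^{-k}$ with $P=\jm(\idm-R)\sigma_0\sigma_0^\top(\idm+R^{-1})$; averaging the conjugation action over the six rotations projects $P$ onto $\mathrm{span}\{\idm,\jm\}$, killing the symmetric-traceless part for the same root-of-unity reason, and a direct evaluation of $P$ recovers $\sqrt3\,\idm$. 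Should a reader prefer to avoid complex notation entirely, the fully elementary alternative is to list the six explicit vertices $\sigma_0,\dots,\sigma_5$, form the six rank-one matrices, and add them; this is routine but more tedious, and the root-of-unity cancellation above is what makes the symmetric structure transparent.
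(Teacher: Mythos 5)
Your proof is correct. It reaches the identity by a different formalism than the paper: you encode each rank-one summand $uv^\top$ as a real-linear map of $\C$ and split it into its $\C$-linear part $a=\tfrac12 w\bar\eta$ and conjugate-linear part $b=\tfrac12 w\eta$, so that the $k$-dependence drops out of $a_k$ entirely (since $\zeta^k\overline{\zeta^k}=1$) and the conjugate-linear parts cancel via $\sum_{k=0}^5\zeta^{2k}=0$. The paper instead works with explicit trigonometry: product-to-sum formulas turn $\jm(\sigma_k-\sigma_{k+1})\left(\tfrac{\sigma_k+\sigma_{k+1}}{3}\right)^\top$ into $\tfrac{\sin(\pi/3)}{3}$ times a rank-one projector in the half-angle direction $\phi_{k+1/2}$, and double-angle formulas split that projector into $\idm$ plus a traceless symmetric matrix with entries $\cos\phi_{2k+1},\sin\phi_{2k+1}$ whose sum over $k$ vanishes. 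The two arguments are structurally parallel --- the conjugate-linear part is precisely the traceless symmetric part, and your $\sum_k\zeta^{2k}=0$ is the same ``spin-$2$'' cancellation as the paper's $\sum_k\cos\phi_{2k+1}=\sum_k\sin\phi_{2k+1}=0$ --- but your complex bookkeeping isolates that cancellation more cleanly and avoids the trigonometric manipulations; it would also adapt immediately to regular $n$-gons for any $n\ge3$. All the individual computations you report ($6a_0=\ii(\bar\zeta-\zeta)=\sqrt3$, the factor $\zeta^{2k}$ in $b_k$, the identification of $\jm$ with multiplication by $\ii$) check out.
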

\begin{proof}
  With the abbreviations $\phi_x=\frac\pi3x$ and $\psi=\frac\pi3$:
  \begin{align*}
    \sum_{k=0}^5\jm(\sigma_{k}-\sigma_{k+1}) \left(\frac{\sigma_k+\sigma_{k+1}}3\right)^T
    &= \frac13\sum_{k=0}^5
      {\sin\phi_{k+1}-\sin\phi_k\choose \cos\phi_k-\cos\phi_{k+1}}{\cos\phi_k+\cos\phi_{k+1}\choose\sin\phi_k+\sin\phi_{k+1}}^T \\
    &= \frac13\sum_{k=0}^5(2\sin\frac\psi2){\cos\phi_{k+\frac12}\choose \sin\phi_{k+\frac12}}
      \,(2\cos\frac\psi2){\cos\phi_{k+\frac12}\choose\sin\phi_{k+\frac12}}^T \\
    &= \frac{\sin\psi}3\sum_{k=0}^5
      \begin{pmatrix}
        2\cos^2\phi_{k+\frac12} & 2\cos\phi_{k+\frac12}\sin\phi_{k+\frac12} \\
        2\cos\phi_{k+\frac12}\sin\phi_{k+\frac12} & 2\sin^2\phi_{k+\frac12}
      \end{pmatrix}
    \\
    &= \frac{\sqrt3}6 \sum_{k=0}^5\left[\idm+
      \begin{pmatrix}
        \cos\phi_{2k+1} & \sin\phi_{2k+1} \\
        \sin\phi_{2k+1} & -\cos\phi_{2k+1}
      \end{pmatrix}
                          \right]
                          = \sqrt3\ \idm. \qedhere
  \end{align*}
\end{proof}
\begin{lemma}
  \label{lem:traces}
  Let the scheme $B:=(b_{pqr})_{p,q,r\in\{1,2\}}\in\R^{2\times2\times2}$ of eight numbers $b_{pqr}\in\R$
  be symmetric in the last two indices, $b_{pqr}=b_{prq}$.
  With $\sigma_k\in\R^2$ defined as in~\eqref{eq:xik},
  we have that
  \begin{align}
    \label{eq:traces}
    \sum_{k=0}^5 \tr\big[\big(\sigma_k\big|\sigma_{k+1}\big)^{-1}\big(B:[\sigma_k]^2\big|B:[\sigma_{k+1}]^2\big)\big]\,\jm(\sigma_{k}-\sigma_{k+1})
    =2\sqrt3\tr_{12}[B]^T.
  \end{align}
\end{lemma}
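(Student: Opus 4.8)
The plan is to collapse the trace into a pair of determinants and then to evaluate the resulting finite hexagonal sum by a Fourier argument. First I would use that consecutive directions $\sigma_k,\sigma_{k+1}$ enclose the angle $\pi/3$, so that $\det(\sigma_k|\sigma_{k+1})=\sin\frac\pi3=\frac{\sqrt3}2$ for every $k$. Abbreviating $\beta_k:=B:[\sigma_k]^2\in\R^2$ and $M_k:=(\sigma_k|\sigma_{k+1})$, Lemma~\ref{lem:JAJ} together with Cramer's rule for $2\times2$ matrices gives $M_k^{-1}=\frac{2}{\sqrt3}\adj M_k$, and expanding the two diagonal entries of $M_k^{-1}(\beta_k|\beta_{k+1})$ turns each trace into a sum of two determinants,
\begin{align*}
  \tr\big[(\sigma_k|\sigma_{k+1})^{-1}(\beta_k|\beta_{k+1})\big]
  = \frac{2}{\sqrt3}\big[\det(\beta_k|\sigma_{k+1})+\det(\sigma_k|\beta_{k+1})\big].
\end{align*}
Thus the left-hand side of~\eqref{eq:traces} becomes the explicit finite sum $\frac{2}{\sqrt3}\sum_{k=0}^5\big[\det(\beta_k|\sigma_{k+1})+\det(\sigma_k|\beta_{k+1})\big]\jm(\sigma_k-\sigma_{k+1})$, which is linear in $B$ and involves only the six explicit vectors $\sigma_k$.

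Next I would evaluate this sum by Fourier analysis on the hexagon, exactly as in the proof of Lemma~\ref{lem:circle}. Writing $\sigma_k=(\cos\theta_k,\sin\theta_k)$ with $\theta_k=k\pi/3$, every summand is a trigonometric polynomial in $\theta_k$ that is summed over the six equispaced angles, and I would invoke the orthogonality relation $\sum_{k=0}^5\ee^{\ii m\theta_k}=0$, valid whenever $6\nmid m$. Since $\beta_k$ is quadratic in $\sigma_k$ it carries only the frequencies $0,\pm2$, so each bracket $\det(\beta_k|\sigma_{k+1})+\det(\sigma_k|\beta_{k+1})$ carries frequencies $\pm1,\pm3$, whereas $\jm(\sigma_k-\sigma_{k+1})$ carries frequency $\pm1$. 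The products therefore occupy frequencies $\{0,\pm2,\pm4\}$, and only the frequency-$0$ part survives the summation. In particular, the part of $B$ responsible for the frequency-$(\pm3)$ content of the brackets contributes solely to frequencies $\pm2,\pm4$ and cancels completely; this is the structural reason, foreshadowed in Remark~\ref{rmk:weird}, that the skew hexagon~\eqref{eq:newsigma} breaks consistency while the symmetric one does not.

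It then remains to identify the surviving frequency-$0$ contribution together with its constant. By linearity in $B$ it suffices to verify the identity on the six basis tensors with a single nonzero entry $b_{pqr}=1$ (up to the symmetry $b_{pqr}=b_{prq}$), each a short summation of exactly the type performed for Lemma~\ref{lem:circle}; the frequency count above guarantees that in each case a single constant term is left. For instance $b_{111}=1$ gives $\beta_k=(\cos^2\theta_k,0)$ and $\tr_{12}[B]^T=(1,0)^T$, and the two coordinate sums $\sum_{k=0}^5[\det(\beta_k|\sigma_{k+1})+\det(\sigma_k|\beta_{k+1})](\jm(\sigma_k-\sigma_{k+1}))_j$ evaluate to $3$ and $0$, so that the prefactor $\frac{2}{\sqrt3}$ produces $\frac{2}{\sqrt3}(3,0)^T=2\sqrt3\,(1,0)^T=2\sqrt3\,\tr_{12}[B]^T$. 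Carrying out the remaining five checks and reassembling by linearity yields~\eqref{eq:traces}. I expect the main obstacle to be precisely this final bookkeeping: confirming that, after the high-frequency cancellation, the one surviving linear functional of $B$ is the contraction $\tr_{12}[B]$ and that its coefficient is exactly $2\sqrt3$, rather than some other combination of the entries of $B$.
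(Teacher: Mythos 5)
Your approach is sound and genuinely differently organised from the paper's. The identity
$\tr\big[(\sigma_k|\sigma_{k+1})^{-1}(\beta_k|\beta_{k+1})\big]=\tfrac{2}{\sqrt3}\big[\det(\beta_k|\sigma_{k+1})+\det(\sigma_k|\beta_{k+1})\big]$
is correct (it is just the $2\times2$ adjugate formula; Lemma~\ref{lem:JAJ} is not really needed), and it reproduces exactly the paper's coefficients $\gamma_{pqr,k}$ in a more compact form. Your discrete-Fourier argument --- each summand has frequencies in $\{0,\pm2,\pm4\}$ with respect to $\theta_k=k\pi/3$, and $\sum_{k=0}^5\ee^{\ii m\theta_k}=0$ for $6\nmid m$, so only the constant mode survives --- is a cleaner explanation of the cancellation than the paper's route, which uses the antipodal symmetry $\sigma_{k+3}=-\sigma_k$ to halve the sum and then parity relations among the $\gamma_{pqr,k}$ to kill individual terms. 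It also gives a more conceptual reading of Remark~\ref{rmk:weird}: for the skew hexagon the six directions are not equispaced, so the orthogonality fails. I checked your $b_{111}$ case and the value $(3,0)^T$ is correct, giving $\tfrac{2}{\sqrt3}(3,0)^T=2\sqrt3\,\tr_{12}[B]^T$ as claimed.

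That said, as written this is a proof plan rather than a proof: the Fourier argument only shows that the left-hand side of~\eqref{eq:traces} is \emph{some} linear functional of $B$; identifying that functional as $2\sqrt3\,\tr_{12}[B]^T$ still requires the remaining five basis-tensor evaluations (or equivalent explicit computations), which you defer and yourself label the ``main obstacle''. This is precisely where the paper spends its effort, tabulating $\gamma_{pqr,0}$ and $\gamma_{pqr,1}$ and assembling $2\sqrt3\,(b_{111}+b_{212},\,b_{222}+b_{112})^T$. The missing checks are routine and of exactly the type you carried out for $b_{111}$, so there is no conceptual gap --- but they must be done before the argument is complete, since the constant $2\sqrt3$ and the specific contraction $\tr_{12}[B]$ cannot be read off from the frequency count alone.
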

\begin{proof}
  In principle, this lemma can be verified by a direct calculation,
  by writing out the six terms in the sum explicitly and using trigonometric identities.
  Below, we give a slightly more conceptual proof,
  in which we use symmetry arguments to reduce the number of expressions significantly.

  For the matrix involving $B$, we obtain
  \begin{align*}
    &\big(B:[\sigma_k]^2\big|B:[\sigma_{k+1}]^2\big) \\
    &=
    \begin{pmatrix}
      b_{111}\sigma_{k,1}^2+b_{122}\sigma_{k,2}^2+2b_{112}\sigma_{k,1}\sigma_{k,2}
      & b_{111}\sigma_{k+1,1}^2+b_{122}\sigma_{k+1,2}^2+2b_{112}\sigma_{k+1,1}\sigma_{k+1,2} \\
      b_{211}\sigma_{k,1}^2+b_{222}\sigma_{k,2}^2+2b_{212}\sigma_{k,1}\sigma_{k,2}
      & b_{211}\sigma_{k+1,1}^2+b_{222}\sigma_{k+1,2}^2+2b_{212}\sigma_{k+1,1}\sigma_{k+1,2}
    \end{pmatrix},
  \end{align*}
  while clearly
  \begin{align*}
    \big(\sigma_k\big|\sigma_{k+1}\big)^{-1}
    = \frac{2}{\sqrt3}
    \begin{pmatrix} \sigma_{k+1,2} & -\sigma_{k+1,1} \\ -\sigma_{k,2} & \sigma_{k,1} \end{pmatrix}.
  \end{align*}
  The sum of the diagonal entries of the matrix product are easily calculated,
  \begin{align*}
    T_k:=\tr\big[\big(\sigma_k\big|\sigma_{k+1}\big)^{-1}\big(B:[\sigma_k]^2\big|B:[\sigma_{k+1}]^2\big)\big]
    = \frac2{\sqrt3}\sum_{p,q,r=1}^2 b_{pqr}\gamma_{pqr,k},
  \end{align*}
  with the trigonometric expressions
  \begin{align*}
    &\gamma_{111,k}=\sigma_{k,1}^2\sigma_{k+1,2}-\sigma_{k+1,1}^2\sigma_{k,2},\quad
    \gamma_{122,k}=\sigma_{k,2}^2\sigma_{k+1,2}-\sigma_{k+1,2}^2\sigma_{k,2}, \\
    &\gamma_{112,k}=\gamma_{121,k}=\sigma_{k,1}\sigma_{k,2}\sigma_{k+1,2}-\sigma_{k+1,1}\sigma_{k+1,2}\sigma_{k,2}, \\
    &\gamma_{211,k}=\sigma_{k+1,1}^2\sigma_{k,1}-\sigma_{k,1}^2\sigma_{k+1,1},\quad
    \gamma_{222,k}=\sigma_{k+1,2}^2\sigma_{k,1}-\sigma_{k,2}^2\sigma_{k+1,1},\\
    &\gamma_{212,k}=\gamma_{221,k}=\sigma_{k+1,1}\sigma_{k+1,2}\sigma_{k,1}-\sigma_{k,1}\sigma_{k,2}\sigma_{k+1,1}.
  \end{align*}
  To key step is to calculate the sum over $k=0,1,\ldots,5$ of the products of $T_k$
  with the respective vector
  \[ \eta_k =\jm(\sigma_{k}-\sigma_{k+1})=
  \begin{pmatrix}
    \sigma_{k+1,2}-\sigma_{k,2}\\ \sigma_{k,1}-\sigma_{k+1,1}
  \end{pmatrix}
  .
  \]
  Several simplifications of this sum can be performed,
  thanks to the particular form of the $\gamma_{pqr,k}$ and elementary trigonometric identities.
  First, observe that $\sigma_{k+3}=-\sigma_k$, and hence that $\gamma_{pqr,k+3}=-\gamma_{pqr,k}$.
  Since further $\eta_{k+3}=-\eta_k$, it follows that
  \begin{align}
    \label{eq:trig1}
    \gamma_{pqr,k+3}\eta_{k+3}=\gamma_{pqr,k}\eta_k.
  \end{align}
  Second, $\eta$ can be evaluated explicitly for $k=1,2,3$:
  \begin{align}
    \label{eq:trig2}
    \eta_0 =\frac12 \begin{pmatrix} \sqrt3 \\ 1 \end{pmatrix},
    \quad
    \eta_1 = \begin{pmatrix} 0 \\ 1 \end{pmatrix},
    \quad
    \eta_2 =\frac12 \begin{pmatrix} -\sqrt3 \\ 1 \end{pmatrix}.
  \end{align}
  Third, since
  $\sigma_{0,1}=-\sigma_{3,1}$ and $\sigma_{1,1}=-\sigma_{2,1}$,
  as well as
  $\sigma_{0,2}=\sigma_{3,2}$ and $\sigma_{1,2}=\sigma_{2,2}$,
  we obtain that
  \begin{align}
    \label{eq:trig3}
    \gamma_{pqr,1} = 0 \quad\text{if $p+q+r$ is odd},
    \quad\text{and}\quad
    \gamma_{pqr,2} = (-1)^{p+q+r}\gamma_{pqr,0}.
  \end{align}
  By putting this together, we arrive at
  \begin{align*}
    \sum_{k=0}^5\gamma_{pqr,k}\eta_k
    &\stackrel{\eqref{eq:trig1}}{=}2\sum_{k=0}^2\gamma_{pqr,k}\eta_k \\
    & \stackrel{\eqref{eq:trig2}}{=}
    \begin{pmatrix}
      \sqrt3\big(\gamma_{pqr,0}-\gamma_{pqr,2}\big) \\
      \gamma_{pqr,0}+2\gamma_{pqr,1}+\gamma_{pqr,2}
    \end{pmatrix} \\
    &\stackrel{\eqref{eq:trig3}}{=}
    \begin{pmatrix}
      \sqrt3\big(1-(-1)^{p+q+r}\big)\gamma_{pqr,0}\\
      \big(1+(-1)^{p+q+r}\big)\big(\gamma_{pqr,0}+\gamma_{pqr,1}\big)
    \end{pmatrix}
    =
    \begin{pmatrix}
      2\sqrt3\,\gamma_{pqr,0}\,(1-\mathfrak{e}_{pqr})\\
      2\big(\gamma_{pqr,0}+\gamma_{pqr,1}\big)\,\mathfrak{e}_{pqr}
    \end{pmatrix}
    ,
  \end{align*}
  where $\mathfrak{e}_{pqr}=1$ if $p+q+r$ is even,
  and $\mathfrak{e}_{pqr}=0$ if $p+q+r$ is odd.
  By elementary computations,
  \begin{align*}
    \begin{array}{llll}
      \text{$p+q+r$ odd, $k=0$}:
      &\gamma_{111,0}=\frac{\sqrt3}2,
      &\gamma_{122,0}=0,
      &\gamma_{212,0}=\gamma_{221,0}=\frac{\sqrt{3}}4;\\
      \text{$p+q+r$ even, $k=0$}:
      &\gamma_{211,0}=-\frac14,
      &\gamma_{222,0}=\frac34,
      &\gamma_{112,0}=\gamma_{121,0}=0;\\
      \text{$p+q+r$ even, $k=1$}:
      &\gamma_{211,1}=\frac14,
      &\gamma_{222,1}=\frac34,
      &\gamma_{112,1}=\gamma_{121,1}=\frac34,
    \end{array}
  \end{align*}
  and so the final result is:
  \begin{align*}
    &\sum_{k=0}^5 \tr\big[\big(\sigma_k\big|\sigma_{k+1}\big)^{-1}\big(B:[\sigma_k]^2\big|B:[\sigma_{k+1}]^2\big)\big]\,\jm(\sigma_{k}-\sigma_{k+1}) \\
    &=\sum_{k=0}^5T_k\eta_k
    =\frac2{\sqrt3}\sum_{p,q,r=1}^2\left(b_{pqr}\sum_{k=0}^5\gamma_{pqr,k}\eta_k\right)
    =2\sqrt3
    \begin{pmatrix}
      b_{111}+b_{212} \\ b_{222}+b_{112}
    \end{pmatrix},
  \end{align*}
  which is~\eqref{eq:traces}.
\end{proof}
\begin{lemma}
  \label{lem:algebra2}
  With $\sigma_k'\in\R^2$ defined as in~\eqref{eq:newsigma},
  and with $B=(b_{pqr})_{p,q,r\in\{1,2\}}\in\R^{2\times2\times2}$
  such that $b_{pqr}=0$ except for $b_{122}=b_{211}=1$,
  we have that
  \begin{align}
    \label{eq:traces2}
    \sum_{k=0}^5 \tr\big[\big(\sigma_k'\big|\sigma_{k+1}'\big)^{-1}\big(B:[\sigma_k']^2\big|B:[\sigma_{k+1}']^2\big)\big]\,\jm(\sigma_{k}-\sigma_{k+1})
    =-{1\choose1}.
  \end{align}
\end{lemma}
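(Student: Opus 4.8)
The plan is to verify the numerical identity \eqref{eq:traces2} by direct evaluation, reusing the symmetry reduction that already organised the proof of Lemma~\ref{lem:traces}. Abbreviate $T_k:=\tr\big[\big(\sigma_k'\big|\sigma_{k+1}'\big)^{-1}\big(B:[\sigma_k']^2\big|B:[\sigma_{k+1}']^2\big)\big]$ and $\eta_k:=\jm(\sigma_k'-\sigma_{k+1}')$, so that the left-hand side of \eqref{eq:traces2} is $\sum_{k=0}^5T_k\eta_k$; the vectors appearing in the sum are built from the skew tuple $\sigma_k'$ of \eqref{eq:newsigma}, in keeping with the counterexample of Remark~\ref{rmk:weird}. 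The tuple \eqref{eq:newsigma} again satisfies the antipodal relation $\sigma_{k+3}'=-\sigma_k'$. Since $B:[\,\cdot\,]^2$ is quadratic it is insensitive to this sign, so the matrix $\big(B:[\sigma_k']^2\big|B:[\sigma_{k+1}']^2\big)$ is unchanged under $k\mapsto k+3$, whereas $\big(\sigma_k'|\sigma_{k+1}'\big)^{-1}$ changes sign; hence $T_{k+3}=-T_k$, and likewise $\eta_{k+3}=-\eta_k$. Exactly as in \eqref{eq:trig1}, this gives $T_{k+3}\eta_{k+3}=T_k\eta_k$, so that $\sum_{k=0}^5T_k\eta_k=2\sum_{k=0}^2T_k\eta_k$ and only three terms remain.

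The second step is to specialise the contraction to the prescribed $B$. Since its only nonzero entries are $b_{122}=b_{211}=1$, for any $\sigma=(\sigma_1,\sigma_2)$ one gets the simple expression $B:[\sigma]^2=(\sigma_2^2,\sigma_1^2)^T$. Feeding in the three pairs $(\sigma_0',\sigma_1')$, $(\sigma_1',\sigma_2')$, $(\sigma_2',\sigma_3')$ from \eqref{eq:newsigma}, the remaining computation is the evaluation of three $2\times2$ inverses, three matrix products and their traces. This is routine and yields $T_0=T_1=-\tfrac12$ and $T_2=0$, while the accompanying vectors are $\eta_0=\eta_1={\frac12\choose\frac12}$ and $\eta_2={-1\choose1}$. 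Assembling the three surviving terms, $\sum_{k=0}^2T_k\eta_k=-\tfrac12{\frac12\choose\frac12}-\tfrac12{\frac12\choose\frac12}+0={-\frac12\choose-\frac12}$, and doubling gives $\sum_{k=0}^5T_k\eta_k=-{1\choose1}$, which is \eqref{eq:traces2}.

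Because the assertion is a concrete numerical equality, there is no genuine conceptual obstacle; the one thing that requires care is bookkeeping, in particular keeping the sign convention of $\jm$ fixed and invoking the quadratic (hence sign-even) nature of $B:[\,\cdot\,]^2$ together with the sign-odd behaviour of $\big(\sigma_k'|\sigma_{k+1}'\big)^{-1}$ to license the halving of the sum. The point of recording the lemma is the contrast with Lemma~\ref{lem:traces}: there the analogous sum produced a fixed nonzero multiple of $\tr_{12}[B]^T$, whereas for the present $B$ that right-hand side vanishes (as observed in Remark~\ref{rmk:weird}), yet the value $-{1\choose1}$ obtained here does not, so the hoped-for identity \eqref{eq:howinhellcanthatfail} cannot hold.
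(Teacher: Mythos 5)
Your proof is correct and follows essentially the same route as the paper's: a direct computation of the three independent traces $T_0=T_1=-\tfrac12$, $T_2=0$, using the antipodal symmetry $\sigma_{k+3}'=-\sigma_k'$ (so $T_{k+3}=-T_k$ and $\eta_{k+3}=-\eta_k$) to halve the sum, followed by assembly of the vectors $\jm(\sigma_k'-\sigma_{k+1}')$. All the intermediate values match the paper's, and your concluding remark about why this refutes \eqref{eq:howinhellcanthatfail} is exactly the role the lemma plays in Remark~\ref{rmk:weird}.
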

\begin{proof}
  This is a slightly tedious, but straightforward calculation.
  First, by the choice of $B$,
  \begin{align*}
    \beta_k:=\big(B:[\sigma_k']^2\big|B:[\sigma_{k+1}']^2\big)
    = \begin{pmatrix}
      (\sigma_{k,2}')^2 & (\sigma_{k+1,2}')^2 \\ (\sigma_{k,1}')^2 & (\sigma_{k+1,1}')^2
    \end{pmatrix},
  \end{align*}
  and so, by definition of the $\sigma_k'$ in~\eqref{eq:newsigma},
  \begin{align*}
    \beta_0 =\beta_3
    = \begin{pmatrix} 0 & \frac14 \\  1 & \frac14 \end{pmatrix},
                                          \quad
                                          \beta_0=\beta_3
                                          = \begin{pmatrix} \frac14 & 1 \\ \frac14 & 0 \end{pmatrix},
                                                                                     \quad
                                                                                     \beta_0=\beta_3
                                                                                     = \begin{pmatrix} 1 & 0 \\ 0 & 1 \end{pmatrix}.
  \end{align*}
  For the inverse matrices $S_k:=\big(\sigma_k'\big|\sigma_{k+1}'\big)^{-1}$, we obtain
  \begin{align*}
    S_0 = \begin{pmatrix}  1&-1 \\ 0 & 2 \end{pmatrix} = -S_3,
                                       \quad
                                   S_1 = \begin{pmatrix}  2&0 \\ -1&1 \end{pmatrix} = -S_4,
                                                                     S_2 = \begin{pmatrix}  0&1 \\ -1&0 \end{pmatrix} = -S_5.
  \end{align*}
  For the traces $T_k:=\tr\big[S_k\beta_k\big]$, we thus obtain the values:
  \begin{align*}
    T_0=T_1=-\frac12,\quad T_3=T_4=\frac12, \quad T_2=T_5=0.
  \end{align*}
  In conclusion,
  \begin{align*}
    \sum_{k=0}^5 T_k\,\jm(\sigma_{k}-\sigma_{k+1})
    =  \jm\left[-\frac12 (\sigma_0-\sigma_2) +\frac12 (\sigma_3-\sigma_5) \right]
    = \jm{-1\choose1}
    = -{1\choose1},
  \end{align*}
  which is~\eqref{eq:traces2}.
\end{proof}

\section{Lack of convexity}
\label{sct:notconvex}
Below, we discuss why the minimization problem~\eqref{eq:mini} is not convex.
More precisely, we show that $G\mapsto\ent_\disc(G;\hat G)$ is not convex
as a function of $G$ on the affine ansatz space $\ansatz$.
Since $\ent_\disc(G;\hat G)$ is a convex combination
of the expressions $\entdens_m\big((A_m|b_m);(\hat A_m|\hat b_m)\big)$,
it clearly suffices to discuss the convexity of the latter.

We consider a curve $s\mapsto (A_m+s\alpha_m|b_m+s\beta_m)$
and evaluate the second derivatives of the components of the functional at $s=0$.
First,
\begin{align*}
  \I:=\frac{\dn^{2}}{\dn s^{2}}\bigg|_{s=0} &
  \left(\frac1{2\tau}\aint_{\Delta_m} \big|(A_{m}-\hat A_m + s \alpha_{m})\omega + (b_{m}-\hat b_m) + s \beta_{m}\big|^2\dd\omega\right) \\
  &= \frac1\tau\aint_{\Delta_m}|\alpha_m\omega+\beta_m|^2\dd\omega.
\end{align*}
Second,
\begin{align*}
  \II:=\frac{\dn^{2}}{\dn s^{2}}\bigg|_{s=0} & \aint_{\Delta_m} V\big((A_{m} + s \alpha_{m})\omega+ (b_{m} + s \beta_{m})\big)\dd\omega \\
  &= \aint_{\Delta_{m}} (\alpha_{m} \omega + \beta_{m})^{T}\cdot\Grad^{2} V(A_{m} \omega + b_{m})\cdot(\alpha_{m} \omega + \beta_{m}) \dd \omega .
\end{align*}
If we assume that $\Grad^{2} V \geq \lambda \idm$,
then we obtain for the sum of these two contributions that
\begin{align*}
  \I+\II\ge \left(\frac1\tau+\lambda\right)\aint_{\Delta_m}|\alpha_m\omega+\beta_m|^2\dd\omega.
\end{align*}
For the remaining term, however, we obtain
--- using the abbreviations $\widetilde g(s)=s\widetilde h'(s)$ and $\widetilde f(s)=s\widetilde g'(s)$ ---
that
\begin{align*}
  \frac{\dn^2}{\dn^2 s}\bigg|_{s=0}& \widetilde h\left(\frac{\det(A_{m} + s \alpha_{m})}{\refrho_m}\right) \\
  &=\frac{\dn}{\dn s}\bigg|_{s=0} \left\{
    \widetilde g\left(\frac{\det(A_{m} + s \alpha_{m})}{\refrho_m}\right)
    \,\tr\big[(A_{m} + s \alpha_{m})^{-1} \alpha_{m}\big]
  \right\} \\
  &=\widetilde f\left(\frac{\det A_{m}}{\refrho_m}\right)\,\big(\tr\big[A_m^{-1}\alpha_m\big]\big)^2
  - \widetilde g\left(\frac{\det A_{m}}{\refrho_m}\right)\,\tr\big[\big( A_m^{-1}\alpha_m \big)^2\big] .
  % &=\frac{\det A_m}{\refrho_m}\,P\left(\frac{\refrho_m}{\det A_m}\right)
  % \left\{\big(\tr\big[A_m^{-1}\alpha_m\big]\big)^2-\tr\big[\big( A_m^{-1}\alpha_m \big)^2\big] \right\}
  % + P'\left(\frac{\refrho_m}{\det A_m}\right) \big(\tr\big[A_m^{-1}\alpha_m\big]\big)^2.
\end{align*}
Now observe that $\widetilde f(s)=P'(1/s)-sP(1/s)$ is a non-negative,
and $\widetilde g(s) = -sP(1/s)$ is a non-positive function.
Thus, from the two terms in the final sum,
the first one is generally non-negative whereas the second one is of indefinite sign.
Choosing
\[
\alpha_m:=A_m \begin{pmatrix} 0 & 1 \\ 1 & 0 \end{pmatrix},
\quad\text{such that}\quad
\big(\tr\big[A_m^{-1}\alpha_m\big]\big)^2=0, \,
\tr\big[\big( A_m^{-1}\alpha_m \big)^2\big]=2,
\]
the sum is obviously negative.
% \dan{Say a word on why that negativity cannot be compensated by the positivity above.}

\addcontentsline{toc}{section}{Bibliography}

\bibliographystyle{abbrv}
\bibliography{CDMM_lagrangian}

\end{document}